\newcommand{\mathscripty}{\mathscr}
\newcommand{\rs}{\mathord{\upharpoonright}}
\newcommand{\cL}{\mathcal{L}}
\newcommand{\cX}{\mathcal{X}}
\newtheorem*{rigprob*}{Rigidity Problem for uniform Roe Algebras}
\newtheorem*{rigprobcorona*}{Rigidity Problem for uniform Roe Coronas}
\newcommand{\SP}{\mathscripty{P}}
\newcommand{\cstar}{$\mathrm{C}^*$}
\newcommand{\bbL}{\mathbb L}
\newcommand{\bbP}{\mathbb P}
\newcommand{\cU}{\mathcal{U}}
\newcommand{\cZ}{\mathcal{Z}}
\newcommand{\cF}{\mathcal{F}}
\newcommand{\cP}{\mathcal{P}}
\newcommand{\bbN}{\mathbb{N}}
\newcommand{\bbZ}{\mathbb{Z}}
\newcommand{\cI}{\mathcal I}
\newcommand{\ZFC}{\mathrm{ZFC}}
\newcommand{\CH}{\mathrm{CH}}
\renewcommand{\P}{\SP}
\newtheorem{thm}{Theorem}
\newtheorem{coro}[thm]{Corollary}
\newtheorem{theorem}{Theorem}[section]
\newtheorem*{theorem*}{Theorem}
\newtheorem{proposition}[theorem]{Proposition}
\newtheorem*{proposition*}{Proposition}
\newtheorem{lemma}[theorem]{Lemma}
\newtheorem*{lemma*}{Lemma}
\newtheorem{corollary}[theorem]{Corollary}
\newtheorem*{corollary*}{Corollar}
\newtheorem*{fact*}{Fact}
\theoremstyle{definition}
\newtheorem{definition}[theorem]{Definition}
\newtheorem*{definition*}{Definition}
\newtheorem{claim}[theorem]{Claim}
\newtheorem*{claim*}{Claim}
\newtheorem*{conjecture*}{Conjecture}
\newtheorem{question}[theorem]{Question}
\theoremstyle{remark}
\newtheorem{examples}[theorem]{Examples}
\newtheorem*{example*}{Example}
\newtheorem{remark}[theorem]{Remark}
\newtheorem*{remark*}{Remark}
\newtheorem*{note*}{Note}
\newtheorem*{question*}{Question}
\newtheorem*{acknowledgements*}{Acknowledgements}
\DeclareMathOperator{\Fin}{Fin}
\DeclareMathOperator{\OFin}{\emptyset\otimes\Fin}
\DeclareMathOperator{\Th}{Th}
\newcommand{\calL}{\mathcal L}
\newcommand{\twon}{\{0,1\}^n}
\newcommand{\twoN}{\{0,1\}^\bbN}
\newcommand{\fc}{\mathfrak c}
\newcommand{\bt}{\mathbf t}
\newcommand{\leqphi}{\leq_\varphi}
\newcommand{\leqRB}{\leq_{\textrm{RB}}}
\begin{document}

\title{Saturation of reduced products}

\author[BDB]{Ben De Bondt}
\address[BDB]{
Institut de Math\'ematiques de Jussieu - Paris Rive Gauche (IMJ-PRG)\\
Universit\'e Paris Cit\'e\\
B\^atiment Sophie Germain\\
8 Place Aur\'elie Nemours \\ 75013 Paris, France}
\email{ben.de-bondt@imj-prg.fr}
\urladdr{https://perso.imj-prg.fr/ben-debondt/}

\author[IF]{Ilijas Farah}
\address[IF]{Department of Mathematics and Statistics\\
York University\\
4700 Keele Street\\
North York, Ontario\\ Canada, M3J 1P3\\
and 
Ma\-te\-ma\-ti\-\v cki Institut SANU\\
Kneza Mihaila 36\\
11\,000 Beograd, p.p. 367\\
Serbia}
\email{ifarah@yorku.ca}
\urladdr{https://ifarah.mathstats.yorku.ca}
\thanks{I.F. is partially supported by NSERC}

\author[AV]{Alessandro Vignati}
\address[AV]{
Institut de Math\'ematiques de Jussieu - Paris Rive Gauche (IMJ-PRG)\\
Universit\'e Paris Cit\'e\\
B\^atiment Sophie Germain\\
8 Place Aur\'elie Nemours \\ 75013 Paris, France}
\email{ale.vignati@gmail.com}
\urladdr{http://www.automorph.net/avignati}

\dedicatory{Dedicated to \v Zarko Mijajlovi\' c on the occasion of his 75th birthday.}
\date{\today}
 
\maketitle
\begin{abstract}
We study reduced products $M=\prod_n M_n/\Fin$ of countable structures in a countable language associated with the Fr\'echet ideal. We prove that such $M$ is $2^{\aleph_0}$-saturated if its theory is stable and not $\aleph_2$-saturated otherwise (regardless of whether the Continuum Hypothesis holds). 
This implies that $M$ is isomorphic to an ultrapower (associated with an ultrafilter on $\bbN$) if its theory is stable, even if the CH fails. We also improve a result of Farah and Shelah and prove that there is a forcing extension in which such reduced product $M$ is isomorphic to an ultrapower if and only if the theory of $M$ is stable. All of these conclusions apply for reduced products associated with $F_\sigma$ ideals or more general layered ideals.	
We also prove that a reduced product associated with the asymptotic density zero ideal $\cZ_0$, or any other analytic P-ideal that is not $F_\sigma$, is not even $\aleph_1$-saturated if its theory is unstable. 
%A reduced product of first-order structures associated with the Fr\' echet ideal on $\bbN$, $\Fin$, is fully saturated (regardless of whether the Continuum Hypothesis holds) if and only if its theory is stable. This also applies to reduced products associated with $F_\sigma$ ideals or layered ideals. 
\end{abstract}

Saturated\footnote{Throughout, the term saturated is taken in its model-theoretic meaning, not to be confused with saturated ideals used in set theory. See also \S\ref{S.terminology.saturation} for additional clarification.} models hold a prominent position in the realm of model theory. They can be constructed through transfinite recursion, where meticulous bookkeeping ensures the realization of all pertinent types in the ultimate model. Alternatively, a saturated model may be constructed as an ultrapower. In both scenarios, these models are colossal entities, constructed using the Axiom of Choice and aptly referred to as `monster models', for a compelling reason. Notably, only limited segments of these expansive structures are typically showcased in any specific proof.
It is not even necessary to know that a saturated model of a theory at hand exists in order to apply an argument using saturated models; see \cite{halevi2023saturated}. Indeed an unstable theory $T$ may not have saturated models at all (see \S\ref{S.existence}). 

Our interest in saturated models has completely different origin. The impetus for this work comes from study of automorphism groups of definable models (primarily reduced products associated with $\Fin$ and other analytic ideals on $\bbN$). More precisely, the motivation for the work presented here stems from the following question: \emph{what makes some reduced products rigid and others saturated?} It is well-known (\cite{JonssonOlin}) that every reduced product $\prod_n M_n/\Fin$, where $\Fin$ is the Fr\'echet ideal (the ideal of finite subsets of $\bbN$), is $\aleph_1$-saturated (i.e., countably saturated, meaning every countable consistent type is realised; this is different from countably saturated as defined in \cite[p. 100]{ChaKe}).\footnote{See \S\ref{S.terminology.saturation}.}
%	Therefore a standard back-and-forth argument due to Keisler shows that if all $M_n$ are of cardinality not greater than\footnote{By $\fc$ we denote the cardinality of the continuum, $2^{\aleph_0}$.} $\fc$ and the Continuum Hypothesis ($\CH$) holds, then 
Therefore a standard back-and-forth argument due to Keisler shows that if the Continuum Hypothesis ($\CH$) holds, 
then for all $M_n$ of cardinality not greater than\footnote{By $\fc$ we denote the cardinality of the continuum, $2^{\aleph_0}$.} $\fc$:
\begin{enumerate}[label=(\alph*)]
\item\label{itm:a}the reduced product $\prod_n M_n/\Fin$ has $2^{\fc}$ automorphisms,
\item\label{itm:b} $\prod_n M_n/\Fin$ is isomorphic to the ultrapower of any of its elementary submodels associated with a nonprincipal ultrafilter on~$\bbN$,
\item\label{itm:c} if $\prod_n N_n/\Fin$ is another reduced product with all $N_n$ of cardinality not greater than $\fc$, then $\prod_n M_n/\Fin$ and $\prod_n N_n/\Fin$ are isomorphic if and only if they are elementarily equivalent. 
\end{enumerate}
On the other hand, if $\CH$ fails then different reduced products $\prod_n M_n/\Fin$ of small structures (even restricting just to countable structures $M_n$) exhibit different behaviour, which for some reduced products is markedly different than \ref{itm:a},\ref{itm:b},\ref{itm:c} above. The prime (and historically first) example of this phenomenon is a seminal result of Shelah, that there is a forcing extension in which the reduced power of the two-element Boolean algebra $ \{0,1\}^\bbN/\Fin$ has only trivial automorphisms~(\cite{Sh:Proper}). In contrast, the group $ (\bbZ/2\bbZ)^\bbN/\Fin$ is saturated and has $2^{\fc}$ automorphisms, provably in $\ZFC$. The underlying structure in both cases is the same, $\cP(\bbN)/\Fin$, but their languages are different.

For structures of cardinality not greater than $\fc$, our main results identify model-theoretic stability as \emph{the} dividing line between: on one side at most $\aleph_1$-saturation, and on the other side the maximal level of saturation possible (independently of whether $\CH$ holds). Stability is a central property of first-order theories that was isolated by Shelah in \cite{shelah1969}, see also \cite{shelah2009classification}, from the proof of Morley's Categoricity Theorem (\cite{Morley.Cat}) and serves as a fundamental dividing line for first order theories. %\footnote{{\tt IF: Dwelling on what is common knowledge (the history of stability) in the intro will make us look unprofessional so I abridged the historic remarks. I am not sure that it is a good idea to mention a few randomly chosen uses of stability. Besides, it is the reduced products and saturation that we need to sell, not stability.} }
Stable theories are tame, while instability (in its various guises and degrees) allows for coding combinatorial structures of varying complexities. This paradigm extends smoothly to logic of metric structures (\cite{BYU:ContStab}, \cite{FaHaSh:Model1} and \cite{FaHaSh:Model2}). 
%The tameness of stable theories had since been developed and found applications outside pure model theory (for example, to diophantine geometry \cite{Hru.MordellLang} and Banach space theory \cite[Ch.~13 and 15]{Iovino.Book}). We refer the reader to \cite{Pillay.IntroStab} for a thorough treatment of model-theoretic stability and its applications.
The following is our first main theorem, and it generalizes the behaviour of the group $ (\bbZ/2\bbZ)^\bbN/\Fin$ mentioned earlier. 

\begin{thm}\label{T.saturated}
If $M_n$, for $n\in \bbN$, are structures in a countable language and the theory of $M=\prod_n M_n/\Fin$ is stable, then $M$ is $\fc$-saturated. In particular, if $|M_n|\leq \fc$ for all $n$, then $M$ is saturated. 
\end{thm}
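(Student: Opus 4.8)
The plan is to exploit the well-known fact that $M = \prod_n M_n/\Fin$ is always countably saturated, and then to leverage stability to bootstrap this up to $\fc$-saturation. The key structural feature of reduced products over $\Fin$ is the \emph{Feferman--Vaught}-type analysis: formulas in $M$ reduce to Boolean combinations of statements about the index set modulo $\Fin$, i.e. about $\cP(\bbN)/\Fin$, with ``coefficients'' that are first-order properties holding on prescribed sets of coordinates. Equivalently, $M$ carries a definable action of $\cP(\bbN)/\Fin$ by ``patching'': given $a, b \in M$ and $A \subseteq \bbN$, there is $c \in M$ with $c \rs A = a \rs A$ and $c \rs (\bbN \sm A) = b \rs (\bbN \sm A)$, and this operation is (uniformly) definable. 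This patching structure is what makes reduced products behave like saturated objects beyond $\aleph_1$.

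The main steps I would carry out are as follows. \textbf{Step 1.} Fix a type $p(x)$ over a parameter set $C \subseteq M$ with $|C| < \fc$, consistent with $\Th(M)$. Using stability, one controls the number of types: over a set of size $\kappa < \fc$ there are at most $\kappa^{\aleph_0} \le \fc$ complete types, and more importantly every type is \emph{finitely satisfiable in} (equivalently, \emph{definable over}) a countable subset $C_0 \subseteq C$ — this is the defining local character property of forking in stable theories. \textbf{Step 2.} Replace $p$ by its restriction $p \rs C_0$; since $p$ is the unique nonforking extension of $p\rs C_0$ and $M$ is countably saturated, $p\rs C_0$ is realized in $M$, say by $a_0$. \textbf{Step 3.} The problem is now to ``correct'' $a_0$ on a small set of coordinates so that it realizes all of $p$, not merely $p\rs C_0$. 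Here I would enumerate $C \sm C_0$ and build, by transfinite recursion of length $|C|$, an increasing chain of countable subsets together with partial realizations, at each stage using countable saturation to realize the relevant countable fragment of $p$ and using the patching operation over $\cP(\bbN)/\Fin$ to amalgamate the new partial solution with the one built so far; stability guarantees the fragments cohere (the nonforking extension is unique, so there is no branching). The ``glued'' element is then shown to realize $p$ using the Feferman--Vaught reduction: any finite subtype of $p$ mentions finitely many formulas, and on the relevant coordinate blocks the glued element agrees with a genuine realization.

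\textbf{Main obstacle.} The delicate point is Step 3: countable saturation alone gives realizations of countable subtypes, but to combine continuum-many of these into a single element one must organize the gluing so that the coordinate-sets used at different stages are compatible modulo $\Fin$ — a naive recursion will ask for $\fc$-many incompatible conditions on $\cP(\bbN)/\Fin$, which need not have a common lower bound. Stability is exactly what resolves this: because forking has countable local character and nonforking extensions are unique, the ``demands'' made at stage $\alpha$ are already determined by a countable sub-demand, so one never genuinely needs more than countably many simultaneous conditions, and countable saturation of $\cP(\bbN)/\Fin$ (i.e. that $\Fin$ is a P-ideal, or the analogous tree/interleaving argument) suffices to carry the recursion through. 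Making this ``the demands are countably determined'' precise — i.e. transferring the model-theoretic local character of forking into a combinatorial statement about which coordinate blocks matter — is the technical heart of the argument, and I expect it to occupy the bulk of the proof; the final ``in particular'' clause is then immediate since a structure of cardinality $\le \fc$ that is $\fc$-saturated is saturated.
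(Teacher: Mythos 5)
Your overall plan---realize a countable ``core'' of the type by $\aleph_1$-saturation and then correct by patching along $\cP(\bbN)/\Fin$---is not the paper's route, and as written it has a genuine gap at exactly the point you flag as the main obstacle. Local character gives a countable $C_0\subseteq C$ over which $p$ does not fork, and definability gives countably many defining schemata, but an element of $M$ realizing $p\rs C_0$ (or any countable fragment) is in no way constrained to realize the nonforking extension of $p\rs C_0$ to $C$: the requirement that $\varphi(a,\bar c)$ hold exactly when the $\varphi$-definition accepts $\bar c$ still ranges over all $|C|<\fc$ parameters, so the recursion in your Step 3 really does impose up to $\fc$-many correction demands, each living on its own coordinate set. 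Your proposed resolution---that stability makes these demands ``countably determined'' so that countable saturation of $\cP(\bbN)/\Fin$ suffices to carry the recursion---is asserted rather than proved, and it is not true as stated: nothing in the forking calculus tells you that the coordinate blocks on which the various corrections must act admit a common refinement modulo $\Fin$, which is precisely the obstruction you yourself identify. (Also, finite satisfiability in $C_0$ and definability over $C_0$ are not equivalent, and uniqueness of the nonforking extension requires stationarity, e.g.\ working over $\operatorname{acl}^{\mathrm{eq}}(C_0)$; but these are repairable details compared to the main gap.)

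For contrast, the paper's mechanism is quite different. Via Palyutin's $h$-formulas and the simple cover property, every complete type over a small set acquires an axiomatization $\bt_A\sqcup\bt_B$ in which $\bt_A$ is a \emph{countable} set of positive $h$-conditions (one instance per formula suffices, because stability forces the definable relation $E_\varphi$ to be an equivalence relation, so any two positive instances of the same $h$-formula in the type are equivalent) and $\bt_B$ consists of $<\fc$ negated $h$-conditions, each excluding exactly one $E_\psi$-class. By $h$-maximality one finds, for each $n$, elements $d_{n,u}$ ($u\in\{0,1\}^n$) realizing the positive part and pairwise $E_{\psi_j}$-inequivalent, and the partition of Proposition~\ref{P.L.1} glues these into $\fc$ many pairwise inequivalent elements $c_f$, $f\in\{0,1\}^{\bbN}$, all satisfying $\bt_A$. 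Each negative condition then kills at most one $c_f$, so some $c_f$ realizes the whole type. Thus the role of stability is not local character of forking but the collapse of the positive part to a countable set of equivalence-class constraints, and the cardinality bottleneck is broken by a counting argument over a perfect set of candidates rather than by amalgamating corrections. To make your approach work you would need to supply an argument of exactly this kind for why the large (negative) part of the type can exclude only few candidates; without it the proof does not close.
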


In Theorem~\ref{T.cs} we prove analogous results for a wider family of ideals, including all $F_\sigma$ ideals\footnote{Here $\cP(\bbN)$ is endowed with its natural Cantor space topology.} that include $\Fin$ (see however Question~\ref{C.atomless}). The following shows that stability is the correct dividing line in this context (at least in case of $\Fin$). 

\begin{thm}\label{T.non-saturated}
 If $M_n$, for $n\in \bbN$, are structures in a countable language and the theory of $M=\prod_n M_n/\Fin$ is unstable, then $M$ is not $\aleph_2$-saturated. In particular, if $\CH$ fails then $M$ is not saturated. 
\end{thm}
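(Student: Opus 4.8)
The plan is to exhibit a type over a set of parameters of size $\aleph_1$ that is finitely satisfiable in $M$ but not realised; since the construction is carried out in $\mathrm{ZFC}$, this yields the failure of $\aleph_2$-saturation outright. The ``in particular'' clause then follows by noting that instability forces $M$ to be infinite and, more precisely, forces infinitely many $M_n$ to have at least two elements (otherwise $\prod_n M_n/\Fin$ is a single point), so that $|M|\ge 2^{\aleph_0}$; hence if $\CH$ fails then $|M|\ge\aleph_2$ and failure of $\aleph_2$-saturation is failure of saturation.

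So assume $\Th(M)$ is unstable and fix a formula $\varphi(\bar x,\bar y)$ witnessing the order property, with $|\bar x|=|\bar y|$. Then $\varphi$-chains of every finite length exist in every model of $\Th(M)$, and by compactness and Ramsey's theorem $\Th(M)$ even has a $\varphi$-chain of order type $\QQ$; since every reduced product modulo $\Fin$ is countably saturated, $M$ contains $\varphi$-chains of every countable order type. A recursion of length $\omega_1$ in which, at each (necessarily countable) stage, one realises over the parameters chosen so far the consistent type asserting that the next tuple lies $\varphi$-strictly above every previous ``left'' tuple and $\varphi$-strictly below every previous ``right'' tuple produces a $\varphi$-pre-gap $\big((\bar b_\xi)_{\xi<\omega_1},(\bar c_\xi)_{\xi<\omega_1}\big)$ in $M$ (so $\varphi(\bar b_\xi,\bar b_\eta)$ and $\varphi(\bar c_\eta,\bar c_\xi)$ for $\xi<\eta$, and $\varphi(\bar b_\xi,\bar c_\eta)$ for all $\xi,\eta$). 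The associated type $p(\bar z)=\{\varphi(\bar b_\xi,\bar z):\xi<\omega_1\}\cup\{\varphi(\bar z,\bar c_\xi):\xi<\omega_1\}$ is finitely satisfiable in $M$; the entire point is to run the recursion so that $p$ is \emph{omitted}.

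To force this, I would carry out the recursion inside the lifted product $\prod_n M_n$ and keep track, through the Feferman--Vaught analysis of reduced powers modulo $\Fin$, of the Boolean data controlling the $\varphi$-relations among the chosen tuples: for the finitely many ``basic'' formulas $\psi_1,\dots,\psi_r$ that Feferman--Vaught attaches to $\varphi$, and for representatives $\bar\beta_\xi,\bar\gamma_\xi\in\prod_n M_n$ of $\bar b_\xi,\bar c_\xi$, the subsets $\{n:M_n\models\psi_t(\bar\beta_\xi(n),\bar\gamma_\eta(n))\}$ of $\bbN$ and the Boolean combination of them to which $\varphi$ reduces. The aim is to choose $\varphi$, an infinite ``support'' $A\subseteq\bbN$, and the representatives so that this Boolean data carries a faithful copy of a Hausdorff $(\omega_1,\omega_1)$-gap in $\cP(\bbN)/\Fin\cong\cP(A)/\Fin$ — whose existence is a $\mathrm{ZFC}$ theorem — in such a way that decoding (again through the same Feferman--Vaught dictionary) a hypothetical realisation $\bar d\in M$ of $p$ would produce a subset of $\bbN$ filling that gap, which is impossible. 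This generalises the case $M=\{0,1\}^\bbN/\Fin=\cP(\bbN)/\Fin$, where the unfilled Hausdorff gap is literally an omitted type; the task is to manufacture, out of the order property alone, a parameter-definable quasi-order of $M$ into which $\cP(\bbN)/\Fin$ order-embeds faithfully enough for the transfer.

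The main obstacle is exactly this synchronisation. Because reduced products modulo $\Fin$ do not satisfy the {\L}o\'{s} theorem, it is false in general that a long $\varphi$-chain in $M$ is witnessed coordinatewise by long $\varphi$-chains in cofinitely many $M_n$, so one must work with the index structure $\cP(\bbN)/\Fin$ together with the colouring $n\mapsto\Th(M_n)$ throughout. Concretely, the argument splits into: (i) extracting, from an arbitrary order-property formula, a formula and a support $A$ whose Feferman--Vaught translation, on the coded elements, reduces on $\cP(A)/\Fin$ to its order relation — this I expect to be the technical heart; (ii) running the $\omega_1$-recursion so that the codes track a Hausdorff gap, which is where countable saturation and the order property are used jointly at each step to find the next tuple; and (iii) verifying that any realisation of $p$ decodes to a genuine filler of the gap.
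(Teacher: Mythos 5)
Your overall strategy -- transfer a Hausdorff $(\omega_1,\omega_1)$-gap from $\cP(\bbN)/\Fin$ into a parameter-definable quasi-order on $M$, so that a realisation of the associated type would decode to a filler of the gap -- is exactly the strategy of the paper. But what you submit is a plan whose load-bearing step is explicitly deferred: you write that extracting a formula and a support $A$ for which the coordinatewise translation ``reduces on $\cP(A)/\Fin$ to its order relation'' is what you ``expect to be the technical heart,'' and you do not carry it out. That step is not routine, and without it nothing in (ii) or (iii) can be verified. The paper fills it in two moves you are missing. First, instead of general Feferman--Vaught data it uses Palyutin's $h$-formulas, which satisfy a genuine \L o\'s-type theorem for reduced products; since every formula is equivalent modulo $\Th(M)$ to a Boolean combination of $h$-formulas and a Boolean combination of stable formulas is stable, instability is always witnessed by an $h$-formula, and from such a formula one manufactures an $h$-definable \emph{transitive} relation $\leq_\varphi$ (essentially ``$\emptyset\neq\psi(M,\bar x)\subseteq\psi(M,\bar y)$'') together with a two-element chain $a\leq_\varphi b$, $b\nleq_\varphi a$. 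Second, the embedding is completely explicit and requires no $\omega_1$-recursion in $M$: one restricts to the ($\Fin$-positive, hence infinite) set $A$ of coordinates $i$ at which no $c$ satisfies $\tilde b_i\leq_\varphi c\leq_\varphi\tilde a_i$, and sends $B\subseteq A$ to the class of the element equal to $\tilde b_i$ on $B$ and $\tilde a_i$ off $B$. The choice of $A$ is what makes this gap-preserving: any hypothetical interpolant $d$ decodes to the single set $\{i:\tilde b_i\leq_\varphi\tilde d_i\}$, which would split the original gap in $\cP(A)/\Fin$. Your proposal never identifies how to choose $A$ or why a realisation decodes to a filler, which is precisely the content of the theorem.

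A secondary issue: your $\omega_1$-recursion produces only a pre-gap, and you acknowledge that making it an actual gap requires ``synchronisation'' with a Hausdorff gap via the coordinate data -- but once one has the order-embedding above, the recursion is superfluous (one simply pushes a Hausdorff gap forward through it), and without the embedding the recursion cannot be synchronised at all, since for a non-$h$-formula the truth of $\varphi$ in $M$ is not controlled coordinatewise in the way your bookkeeping assumes. So the gap in your argument is not a detail but the main mechanism; the paper's Theorem on $h$-formulas and instability (the equivalence of instability with the existence of an unstable $h$-formula and of an $h$-definable transitive relation of height $\geq 2$) together with the gap-preserving embedding lemma are exactly the two pieces you would need to supply.
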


This is a special case of Corollary~\ref{T.non-saturated+}, in which $\Fin$ is replaced by an arbitrary analytic ideal that extends $\Fin$. The presence of $\aleph_2$ is explained by the existence of a Hausdorff gap in $\cP(\bbN)/\Fin$, and our proof gives a finer `non-saturation transfer theorem’ (Theorem~\ref{T.non-saturated++}). This implies for example that if a reduced product associated with $\cZ_0$ (the ideal of asymptotic density zero) has a theory which is unstable, then it is not even countably saturated (Corollary~\ref{C.P-ideal}). 

Moving on to the situation in which CH fails and strong rigidity results hold, the failure of saturation is a necessary condition for the analog of Shelah’s result about triviality of automorphisms of $\cP(\bbN)/\Fin$, but it is not sufficient. For example, in Cohen’s original model in which $\CH$ fails $\cP(\bbN)/\Fin$ is not saturated but it has $2^{\fc}$ nontrivial automorphisms (\cite{ShSte:Non-trivial}). Numerous rigidity results for quotients along similar lines with Shelah’s have been proven for quotient Boolean algebras, \v Cech--Stone remainders, and \cstar-algebras; see \cite{farah2022corona} for a survey. More recently, in a companion paper (\cite{de2023trivial}) we showed that under forcing axioms a version of Shelah's result on trivial automorphisms holds for reduced products of countable structures in other categories: fields, linear orderings, and sufficiently random graphs. 

It is not surprising, but worth mentioning, that our Theorem~\ref{T.saturated} and Theorem~\ref{T.non-saturated} together imply that the full saturation of reduced products is a property of the theory (unless trivially implied by $\CH$), and is therefore absolute between models of $\ZFC$ and the negation of $\CH$ (this is not automatic because, unlike the original structures, the reduced products themselves change whenever a new real is added to the universe). 
We already mentioned the trivializing effect of $\CH$: it implies that both ultraproducts and reduced products of countable structures associated with $\Fin$ are saturated, leading to the phenomena in \ref{itm:a},\ref{itm:b},\ref{itm:c} above. In particular, $\CH$ implies that, if all $M_n$ have cardinality not greater than~$\fc$, then $\prod_n M_n/\Fin$ is isomorphic to an ultrapower of any of its elementary submodels (item \ref{itm:b}). We also show that for this property stability separates structures `with $\CH$-like behaviour' from others.
%We have already discussed the trivializing effect of $\CH$: it implies that both ultraproducts and reduced products of countable structures associated with $\Fin$ are saturated. Therefore $\CH$ implies that, if all $M_n$ have cardinality not greater than $\fc$, then $\prod_n M_n/\Fin$ is isomorphic to an ultrapower of any of its elementary submodels. 
By \cite[Theorem~5.6]{FaHaSh:Model2}, an ultrapower of a countable structure associated with a nonprincipal ultrafilter on $\bbN$ is saturated, provided that its theory is stable. Thus Theorem~\ref{T.saturated} implies the following $\CH$-like behaviour of reduced products with stable theory. 

\begin{coro}\label{C.stable} Suppose that $M_n$, for $n\in \bbN$, are structures of cardinality not greater than $\fc$ in a countable language. If the theory of $M=\prod_n M_n/\Fin$ is stable, then $M$ is isomorphic to the ultrapower associated with any nonprincipal ultrafilter on $\bbN$ of any of its countable elementary submodels, provably in $\ZFC$. \qed 
\end{coro}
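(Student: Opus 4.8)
The plan is to obtain the corollary formally from Theorem~\ref{T.saturated}, from the cited result \cite[Theorem~5.6]{FaHaSh:Model2}, and from the classical theorem that two elementarily equivalent saturated models of the same cardinality are isomorphic; almost all of the content is thus already available, and what remains is a cardinality count together with the disposal of a degenerate case.

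First I would fix a countable elementary submodel $N\preceq M$ --- one exists by the downward L\"owenheim--Skolem theorem, as the language is countable --- and an arbitrary nonprincipal ultrafilter $\cU$ on $\bbN$. Since $N\preceq M$, the theory $\Th(N)=\Th(M)$ is complete and, by hypothesis, stable, and $N$ is infinite precisely when $M$ is. If $M$ is finite, then $N=M$ and $N^{\cU}\cong M$ (an ultrapower of a finite structure is isomorphic, via the diagonal map, to that structure), so the conclusion is immediate; from here on assume $M$, and hence $N$, is infinite.

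Next I would pin both relevant cardinalities down to $\fc$. The upper bounds are routine: $|N^{\cU}|\le\aleph_0^{\aleph_0}=\fc$ since $N^{\cU}$ is a quotient of $N^{\bbN}$, and $|M|\le\prod_n|M_n|\le\fc^{\aleph_0}=\fc$. For the lower bounds, Theorem~\ref{T.saturated} says that $M$ is $\fc$-saturated, and \cite[Theorem~5.6]{FaHaSh:Model2} --- applicable because $N$ is countable and $\Th(N)$ is stable --- says that $N^{\cU}$ is $\fc$-saturated as well. Now an infinite $\fc$-saturated structure has at least $\fc$ elements: otherwise the type asserting $x\ne a$ for every element $a$ of the structure would be a consistent type involving fewer than $\fc$ parameters and yet not realised, contradicting $\fc$-saturation. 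Since both $M$ and $N^{\cU}$ are infinite ($N^{\cU}$ contains the diagonal copy of the infinite model $N$), this yields $|M|=|N^{\cU}|=\fc$, so each of $M$ and $N^{\cU}$ is a \emph{saturated} model of cardinality $\fc$.

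Finally, \L o\'s's theorem gives $N^{\cU}\equiv N\equiv M$, so $M$ and $N^{\cU}$ are saturated models of one and the same complete theory and of the same cardinality; the standard back-and-forth argument (see, e.g., \cite{ChaKe}) then delivers an isomorphism $M\cong N^{\cU}$. None of the ingredients --- Theorem~\ref{T.saturated}, \cite[Theorem~5.6]{FaHaSh:Model2}, the L\"owenheim--Skolem theorem, or the back-and-forth argument --- uses any set-theoretic hypothesis beyond $\ZFC$, so the isomorphism is produced in $\ZFC$. I do not anticipate a real obstacle: all the weight rests on Theorem~\ref{T.saturated} and on the stability-implies-saturation result for ultrapowers, and the only delicate points are that ``saturated'' in \cite[Theorem~5.6]{FaHaSh:Model2} must be read as $\fc$-saturation and that the cardinality bookkeeping genuinely forces both models to have size $\fc$, which is what upgrades $\fc$-saturation to full saturation.
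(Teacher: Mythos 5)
Your proposal is correct and follows exactly the route the paper intends: the corollary is stated with an immediate \qed because it is the combination of Theorem~\ref{T.saturated} (giving $\fc$-saturation of $M$), \cite[Theorem~5.6]{FaHaSh:Model2} (giving saturation of the ultrapower of a countable elementary submodel with stable theory), and the uniqueness of elementarily equivalent saturated models of the same cardinality. Your extra bookkeeping (the cardinality count pinning both models at $\fc$ and the disposal of the finite case) is exactly the routine detail the paper leaves implicit.
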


The conclusion of this corollary, as well as that of Theorem~\ref{T.saturated}, are of course trivial for uncountably categorical  theories (since all uncountable models of such theory are, by Morley's theorem, saturated, see e.g., \cite[\S 7]{ChaKe}). 

Massive models such as reduced products and ultraproducts are in some arguments interchangeable. This is especially useful in the theory of \cstar-algebras; see the discussion in the introduction to \cite{farah2022betweenI}, which contains more precise results on the relation between ultrapowers and reduced powers in the presence of CH. Motivated by the need to explain this phenomenon, in \cite[Theorem~C(1)]{farah2022betweenII} it was proven that if theories of structures~$M_n$ have the so-called robust order property (a strengthening of the order property that passes to reduced products) then in some forcing extension, $\prod_{n} M_n/\Fin$ is not isomorphic to any ultraproduct $\prod_n N_n/\cU$, where $\cU$ is a nonprincipal ultrafilter on $\bbN$. We refine this result and prove that the stability again provides the precise dividing line (Theorem~\ref{T.nonisomorphic}). 

\subsection*{Acknowledgements}
This work was completed during a visit of IF to the Institut de Math\'ematiques de Jussieu, in Paris. This visit was funded by Universit\'e Paris Cit\'e and IF’s NSERC grant, and the authors would like to thank both institutions. We are indebted to Sylvy Anscombe, Tom\' as Ibarlucia, and Boban Veli\v{c}kovi\'c for conversations. 

\section{Preliminaries}\label{S.preliminaries}
Not much happens in the current section, and the reader may want to skim it and refer to it later on as needed. 

\subsection{Notational conventions}\label{S.Notational}
 We list some of the conventions used throughout. By $\bar a$ we denote a tuple of an unspecified, but syntactically appropriate, size. (In other words, if $\varphi(\bar x,\bar y)$ is a formula and we write $\varphi(\bar a, \bar b)$, it is understood that $\bar a$ is of the same sort as $\bar x$ and $\bar b$ is of the same sort as $\bar y$.) 
%Note that if $\bar a$ is a tuple in a reduced product $\prod_i M_i/\cI$, then we can choose a representing sequence, denoted $\bar a_i$, for $i\in I$, for~$\bar a$. 
If $\varphi(\bar x,\bar y)$ is a formula and $M$ a structure, we write ($\bar{a} $ and $\bar{b}$ are assumed to be of the appropriate sort) 
\[
\varphi(M,\bar b)=\{\bar{a}\mid M \vDash \varphi(\bar a,\bar b)\}.
\]

\subsubsection{Language}\label{S.terminology.saturation}
We fix a countable language, understood to be the language of all formulas and structures considered. Hence all structures and formulas mentioned in the same context are assumed to share the same language, and explicitly we refer to the language only when necessary (as e.g., in~\ref{S.Types}). 

\subsubsection{Types, saturation, compactness}\label{S.Types}
The types that we consider are not necessarily complete (as in e.g., \cite[Definition~4.4.1]{Mark:Model}; while many authors assume that every type is complete, this convention would make formulations of some of our results look awkward). 	With this convention, if $X$ is a subset of a structure $M$, then every type over $X$ is also a type over $M$. This differs from conventions used in stability theory (\cite{Pillay.IntroStab}) but will considerably simplify the presentation of our results and proofs.

If $M$ is a structure in language $\calL$ and $X\subseteq M$ then an \emph{$n$-type over $X$} is a consistent set of formulas in language $\calL(X)$, obtained from $\calL$ by adding names for elements of $X$ whose free variables are included in $x_j$, for $j<n$. If $\kappa$ is an infinite cardinal then a structure $M$ is \emph{$\kappa$-saturated} if for every $X\subseteq M$ with $|X|<\kappa$, every consistent type over $X$ is realised in $M$. It is \emph{saturated} if it is $|M|$-saturated. 

If $M$ is a structure and every (consistent) type over $M$ of cardinality strictly smaller than $\kappa$ is realised in $M$ then $M$ is said to be \emph{$\kappa$-compact}. It is \emph{compact} if it is $|M|$-compact. In general, $\kappa$-compactness is weaker than $\kappa$-saturation. However, if the language is countable (which is our standing assumption), every type over an infinite $X\subseteq M$ has cardinality $|X|$, hence there is no difference between the two notions. Our results will be stated in terms of (the more common) $\kappa$-saturation, but if the assumption on the countability of the language is dropped, all results remain valid if `$\kappa$-saturated' is replaced with $\kappa$-compact' everywhere. 

We would normally much prefer using the term `countable saturation' to `$\aleph_1$-saturation'. This terminology is great if all that one has to worry about are countable saturation and full saturation. %(in addition to worrying that sight of $\aleph_1$ may deter some of the potential readers). 
However, in the present paper we have to explicitly refer to $\aleph_2$-saturation, making the extension of the preferred terminology unfeasible.

\subsubsection{Arity}\label{S.arity}
As is well known, in the definition of ($\kappa$-)saturation it suffices to consider only 1-types. The same applies to ($\kappa$-)compactness. In general, considering only tuples of arity 1 does not lead to loss of generality, by passing to the structure $M^{\textrm{eq}}$ of the imaginaries over $M$; see e.g., \cite[\S III.6]{shelah1990classification}.

\subsubsection{Ideals, reduced products}
If $\cI$ is an ideal on a set $I$, then the coideal of $\cI$-positive sets is
\[
\cI^+=\{A\subseteq I\mid A\notin \cI\}.
\]
By $A\Subset B$ we denote the assertion that $A$ is a finite subset of $B$.

\begin{definition}\label{def:redprod}
	If $\cI$ is an ideal on a set $I$ and $M_i$, for $i\in I$ are structures of the same language, the reduced product $\prod_i M_i/\cI$ associated with $\cI$ is defined as follows. Its universe is the quotient of $\prod_i M_i$ by the equivalence relation $\sim_{\mathcal I}$ defined as
\[
(a_i)\sim_{\mathcal I}(b_i)\iff \{i\in I\mid a_i\neq b_i\}\in\mathcal I,
\] 
for all $(a_i),(b_i)\in \prod_i M_i$. 

The following convention is so convenient (pun not intended) that we introduce it in the middle of the definition of a reduced product. If $\bar a$ is a $k$-tuple in a reduced product $\prod_i M_i/\cI$, then we can choose a representing sequence $\tilde{a} \in \prod_i (M_i)^k$ for $\bar a$, with $i$-th element denoted $\tilde{a}_i \in (M_i)^k$, for $i\in I$.\footnote{Since we are working under the assumption that the Axiom of Choice holds, arbitrarily many such choices can be made simultaneously.}

Let $M=\prod_iM_i/\cI$. If $f$ is a function symbol, then its interpretation is defined by ($\bar a$ is of the appropriate sort) 
\[
f^M(\bar a)=b\iff \{i \mid f^{M_i}(\tilde a_i)\neq \tilde{b}_i\}\in \cI.
\]
If $R$ is a relation symbol, then its interpretation is defined by ($\bar a$ is of the appropriate sort)
\[
R^M(\bar a)\iff \{i \mid \lnot R^{M_i}(\tilde a_i)\}\in \cI.
\]
With these interpretations, $M$ is now a structure in the same countable language of the $M_i$'s.

In case when $N=M_i$ for all $i\in I$, we refer to the $\cI$-reduced power, denoted by $N^I/\cI$ or sometimes $N^\cI$. 
\end{definition}

Definition~\ref{def:redprod} is interchangeable with the definition of a reduced product with respect to a filter (see e.g., \cite[\S 4.1.6]{ChaKe}); more precisely, if $\cF=\cI^*$, the filter dual to $\cI$, then $\prod_i M_i/\cI$ and $\prod_i M_i/\cF$ are naturally isomorphic. If $\mathcal I$ is dual to an ultrafilter $\cU$, then $\prod_i M_i/\cI$ is the ultraproduct $\prod_i M_i/\cU$.

\subsection{Gaps} \label{S.Gaps} 
Theorem~\ref{T.non-saturated} is proven by injecting the reason why $\cP(\bbN)/\Fin$ is not $\aleph_2$-saturated into the given reduced product. This reason is provided by a well-known 1908 result of Hausdorff that we nevertheless proceed to describe, but in a greater generality. 

Even if a binary relation $\leqphi$ on some structure $M$ is not a partial ordering, one can define gaps and limits in $(M,\leqphi)$ analogously to the standard definition. To wit:

\begin{definition} \label{Def.Gaps} 
	%	If $\cI$ is an ideal on $I$ and $\kappa$ and $\lambda$ are cardinals, a collection $(A_\alpha,B_\beta)_{\alpha< \kappa,\beta<\lambda)}$ of subsets of $I$ is said to be a $(\kappa,\lambda)$-gap in $\cP(I)/\cI$ if for all $\alpha<\beta<\kappa$ and $\gamma<\delta<\lambda$ we have that 
	%\[
	%A_\alpha\subseteq^\mathcal I A_\beta\subseteq^\mathcal I B_\delta\subseteq^\mathcal I B_\gamma,
	%\]
	%and there is no $C$ such that $A_\alpha\subseteq^\mathcal I C\subseteq ^\mathcal I B_\beta$ for all $\alpha<\kappa$ and $\beta<\lambda$.
	%
	%A collection $(A_\alpha,B)_{\alpha< \kappa}$ of subsets of $I$ is said to be a \emph{$\kappa$-limit} if for all $\alpha<\beta<\kappa$ we have that 
	%\[
	%A_\alpha\subseteq^{\cI} A_\beta \subseteq^{\cI} B
	%\]
	%and if $C$ is such that $A_\alpha\subseteq^\mathcal I C\subseteq ^\mathcal I B$ for all $\alpha$ then $B\subseteq^{\cI} C$. 
	Suppose that $(M,\leqphi)$ is a set equipped with a binary relation $\leqphi$,\footnote{It is not assumed that $\leqphi$ is transitive or antisymmetric, and the choice of notation is suggestive of the connection with \S\ref{S.Order}.} and let $\kappa$ and $\lambda$ be infinite cardinals. A collection $(a_\alpha,b_\beta)_{\alpha< \kappa,\beta<\lambda}$ of elements of $M$ is said to be a $(\kappa,\lambda)$-gap in $(M,\leqphi)$ if for all $\alpha<\beta<\kappa$ and $\gamma<\delta<\lambda$ we have that
	\[
	a_\alpha\leqphi a_\beta\leqphi b_\delta \leqphi b_\gamma,
	\]
	and there is no $c$ such that $a_\alpha \leqphi c \leqphi b_\beta$ for all $\alpha<\kappa$ and $\beta<\lambda$.

	A collection $(a_\alpha,b)_{\alpha< \kappa}$ of elements of $M$ is said to be a \emph{$\kappa$-limit} if for all $\alpha<\beta<\kappa$ we have that 
	\[
	a_\alpha\leqphi a_\beta \leqphi b
	\]
	and if $c$ is such that $a_\alpha\leqphi c\leqphi b$ for all $\alpha$ then $b\leqphi c$. 
	Naturally, a collection $(a,b_\beta)_{\beta< \kappa}$ of elements of $M$ is also said to be a (downwards directed) \emph{$\kappa$-limit} 
	if for all $\alpha<\beta<\kappa$ we have that 
	\[
	a\leqphi b_\beta \leqphi b_\alpha 
	\]
	and if $c$ is such that $a\leqphi c\leqphi b_\alpha$ for all $\alpha$ then $c\leqphi a$. 
\end{definition}

If $A$ and $B$ are subsets of a set $I$ and $\mathcal I\subseteq\mathcal P(I)$ is an ideal, we write $A\subseteq^\mathcal IB$ if $A\setminus B\in\mathcal I$. (When $\mathcal I=\Fin$, this relation is commonly denoted by~$\subseteq^*$.) 
The special case of Definition~\ref{Def.Gaps} when $M=\cP(\bbN)$ and $\leqphi$ is $\leq^*$ or $\leq^{\cI}$ for some $\cI$ is of particular importance. 
Some authors call $\kappa$-limits $(\kappa,1)$ gaps, although this is a misnomer since such `gap' is obviously split by the unique element of its singleton side. 

By the result of Hausdorff just referred to there exists an $( \aleph_1, \aleph_1)$-gap in $\cP(\bbN)/\Fin$ (see e.g., \cite[Theorem~9.3.7]{Fa:STCstar} for a proof), and it was proven in \cite[Corollary~14]{To:Gaps} that there exists an $(\aleph_1,\aleph_1)$-gap in $\cP(\bbN)/\cI$ for every analytic ideal $\cI$ that includes $\Fin$.\footnote{Some authors distinguish between the cardinal $\aleph_\alpha$ and the ordinal $\omega_\alpha$, but there is no compelling reason to do so (see the discussion in \cite[\S A.4]{Fa:STCstar}).}

\begin{remark}
We have defined a gap as a pair where one `side’ of the gap is increasing and the other is decreasing. While this picture is more convenient for our present purposes, some authors (including some of the authors of the present paper) generally prefer to think of gaps as pairs with two increasing and orthogonal `sides’ that cannot be separated, without even requiring them to be linearly ordered by $\subseteq^\cI$ (or $\leqphi$ for that matter); see \cite[Notes for Chapter 9]{Fa:STCstar} for a discussion. 
\end{remark}

\section{Simple Cover Property of Reduced Products} \label{S.SimpleCoverProperty}
This preparatory section consists of rehashing results of Palyutin as presented by Palmgren that are used to analyze stability in reduced products. \L o\'s's theorem is the fundamental tool for computing the theory of an ultraproduct over an ultrafilter $\mathcal U$. It states that a sentence is true in an ultraproduct if and only if it is true $\mathcal U$-often on fibers. For general reduced products, this is no longer true. For example, a reduced product of fields $\prod_i K_i/\mathcal I$ is a field only when $\mathcal I$ is a maximal ideal. In a series of papers (\cite{palyutin1,palyutin2}), Palyutin isolated the notion of an $h$-formula. These formulas satisfy the exact analog of \L o\'s's theorem for arbitrary reduced products. We use Palyutin's work, but our presentation, terminology, and notation in this and the next subsection are based on Palmgren's~\cite{palmgren}.

\begin{definition}[$h$-formulas] \label{Def.h-formulas} 
The class of \emph{$h$-formulas} is the smallest class of formulas $\mathcal C$ containing all atomic formulas and such that if $\varphi$ and $\psi$ belong to $\mathcal C$, then so do
\[
\varphi\wedge\psi,\,\,\exists x\varphi,\,\,\forall x\varphi\text{ and }\exists x\varphi\wedge\forall x(\varphi\rightarrow\psi).
\]
\end{definition}

$h$-formulas should not be confused with Horn formulas (since we will not need Horn formulas, we will not define them). While by a result of Galvin, a formula is equivalent to a Horn formula if and only if it is preserved by all reduced products (\cite{galvin1970horn}, \cite[\S 6.3]{ChaKe}), in case of $h$-formulas we have the following (much easier) result of Palyutin. 

\begin{theorem}[\cite{palyutin2}]\label{T.Palyutin}
If $\cI$ is an ideal on a set $I$, $M_i$, for $i\in I$, are structures in a countable language, $M=\prod_i M_i/\cI$, and $\varphi(\bar x)$ is an $h$-formula, then%\marginpar{IF: Please don’t delete the second formula. The reason I stated it was to remind the reader that if $\cI$ is not maximal then the negation/complement has to be taken with some care. } 
\begin{align*}
M&\models \varphi(\pi(\bar a))\text{ if and only if } \{i\in I \mid M_i\models \neg \varphi(\bar a_i)\}\in \cI,\\ M&\models \neg\varphi(\pi(\bar a))\text{ if and only if } \{i\in I \mid M_i\models \neg \varphi(\bar a_i)\}\notin \cI, 
\end{align*}
where $\pi\colon\prod_iM_i\to M$ denotes the quotient map. 
\end{theorem}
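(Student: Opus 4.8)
The plan is to prove Theorem~\ref{T.Palyutin} by induction on the complexity of the $h$-formula $\varphi$, following the inductive clauses of Definition~\ref{Def.h-formulas}. Note first that the second displayed equivalence is a trivial reformulation of the first: indeed, $M\models\neg\varphi(\pi(\bar a))$ is the negation of $M\models\varphi(\pi(\bar a))$, and by the first equivalence this happens precisely when $\{i\mid M_i\models\neg\varphi(\bar a_i)\}\notin\cI$. So it suffices to establish the first equivalence. One should also record at the outset that the truth value $M\models\varphi(\pi(\bar a))$ does not depend on the choice of representing sequence $\tilde a$ for $\bar a$, since modifying $\bar a_i$ on an $\cI$-small set of coordinates changes the set $\{i\mid M_i\models\neg\varphi(\bar a_i)\}$ only by an $\cI$-small set; this is needed to make the statement well-posed and is itself proved by the same induction.

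The base case is atomic formulas: if $\varphi$ is atomic, then by the definition of the interpretations of function and relation symbols in a reduced product (Definition~\ref{def:redprod}), $M\models\varphi(\pi(\bar a))$ holds if and only if $\{i\mid M_i\models\neg\varphi(\bar a_i)\}\in\cI$, essentially by unwinding definitions (an atomic formula $R(t_1,\dots,t_k)$ reduces, via the clause for function symbols, to the relation clause). For the inductive step there are four cases. \emph{Conjunction:} if the claim holds for $\varphi$ and $\psi$, then $M\models(\varphi\wedge\psi)(\pi(\bar a))$ iff both $\{i\mid M_i\models\neg\varphi(\bar a_i)\}\in\cI$ and $\{i\mid M_i\models\neg\psi(\bar a_i)\}\in\cI$; since $\cI$ is an ideal, this is equivalent to the union $\{i\mid M_i\models\neg(\varphi\wedge\psi)(\bar a_i)\}\in\cI$. \emph{Existential quantifier $\exists x\,\varphi$:} if $M\models\exists x\,\varphi(\pi(\bar a))$, pick a witness $\pi(\bar b)$, apply the inductive hypothesis to get $\{i\mid M_i\models\neg\varphi(\bar b_i,\bar a_i)\}\in\cI$, and observe that off this set $M_i\models\exists x\,\varphi(\bar a_i)$; conversely, if $\{i\mid M_i\models\neg\exists x\,\varphi(\bar a_i)\}\in\cI$, use the Axiom of Choice to pick, for each $i$ in the complement, a witness $b_i\in M_i$ (and arbitrary $b_i$ elsewhere), forming $\pi(\bar b)$; then $\{i\mid M_i\models\neg\varphi(\bar b_i,\bar a_i)\}\subseteq\{i\mid M_i\models\neg\exists x\,\varphi(\bar a_i)\}\in\cI$, so by induction $M\models\varphi(\pi(\bar b),\pi(\bar a))$ and hence $M\models\exists x\,\varphi(\pi(\bar a))$.

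The \emph{universal quantifier $\forall x\,\varphi$} case is the one requiring more care, and I expect it to be the main obstacle. Here one cannot simply quantify over representatives coordinatewise, because the equivalence $M\models\forall x\,\varphi(\pi(\bar a))$ iff ``$M\models\varphi(\pi(\bar b),\pi(\bar a))$ for all $\pi(\bar b)$'' does not obviously reduce to a fiberwise statement about $\forall x\,\varphi$. The key point: suppose $\{i\mid M_i\models\neg\forall x\,\varphi(\bar a_i)\}\in\cI$; I want $M\models\varphi(\pi(\bar b),\pi(\bar a))$ for every $\bar b$. Fix such a $\bar b$ with representative $\tilde b$. For each $i$ with $M_i\models\forall x\,\varphi(\bar a_i)$ we certainly have $M_i\models\varphi(\bar b_i,\bar a_i)$, so $\{i\mid M_i\models\neg\varphi(\bar b_i,\bar a_i)\}\subseteq\{i\mid M_i\models\neg\forall x\,\varphi(\bar a_i)\}\in\cI$, and induction gives $M\models\varphi(\pi(\bar b),\pi(\bar a))$; thus $M\models\forall x\,\varphi(\pi(\bar a))$. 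Conversely, if $\{i\mid M_i\models\neg\forall x\,\varphi(\bar a_i)\}\notin\cI$, then on this $\cI$-positive set choose (again by AC) witnesses $b_i$ with $M_i\models\neg\varphi(b_i,\bar a_i)$, giving $\pi(\bar b)$ with $\{i\mid M_i\models\neg\varphi(\bar b_i,\bar a_i)\}\supseteq\{i\mid M_i\models\neg\forall x\,\varphi(\bar a_i)\}\notin\cI$; applying the \emph{second} equivalence of the inductive hypothesis (i.e.\ the already-noted reformulation of the first at the level of $\varphi$) yields $M\models\neg\varphi(\pi(\bar b),\pi(\bar a))$, so $M\not\models\forall x\,\varphi(\pi(\bar a))$. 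Finally, the compound clause $\exists x\,\varphi\wedge\forall x(\varphi\to\psi)$ --- which is the genuinely new ingredient of $h$-formulas --- is handled by combining the previous cases: this formula is not literally built from the connectives $\wedge,\exists,\forall$ applied to $h$-formulas (since $\varphi\to\psi$ is not an $h$-formula), so one treats it directly, noting that $M\models(\exists x\,\varphi\wedge\forall x(\varphi\to\psi))(\pi(\bar a))$ says there is $\pi(\bar b)$ with $M\models\varphi(\pi(\bar b),\pi(\bar a))$ and every $\pi(\bar c)$ with $M\models\varphi(\pi(\bar c),\pi(\bar a))$ also satisfies $M\models\psi(\pi(\bar c),\pi(\bar a))$; using the inductive hypotheses for $\varphi$ and $\psi$ (in both their positive and negative forms) one shows this is equivalent to the set of $i$ where $M_i$ fails $\exists x\,\varphi\wedge\forall x(\varphi\to\psi)$ lying in $\cI$, the subtlety being to glue a fiberwise witness for the $\exists$ part into a global one while simultaneously controlling, on an $\cI$-large set, that $\varphi$ forces $\psi$ --- here one exploits that on the $\cI$-large set of good coordinates the fiber already satisfies the whole $h$-formula so any local witness works, and that any purported global counterexample $\pi(\bar c)$ to $\forall x(\varphi\to\psi)$ would, by the induction for $\varphi$ and for $\neg\psi$, be witnessed on an $\cI$-positive set of coordinates, contradicting that the bad set is $\cI$-small.
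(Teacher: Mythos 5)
Your proof is correct and is exactly the argument the paper intends: the paper's own proof of Theorem~\ref{T.Palyutin} consists of the single line ``by induction on the complexity of $\varphi$'', which is precisely the induction you carry out (well-definedness, atomic case via the coordinatewise interpretations, closure of $\cI$ under unions for conjunction, AC-gluing of fiberwise witnesses for $\exists$, and the contrapositive via the negative form of the inductive hypothesis for $\forall$). The only place your sketch leaves real work implicit is the ``only if'' direction of the compound clause $\exists x\,\varphi\wedge\forall x(\varphi\to\psi)$ --- splitting the $\cI$-positive bad set into the part where $\exists x\,\varphi$ fails and the part where $\exists x(\varphi\wedge\neg\psi)$ holds, and padding the fiberwise counterexamples on the latter with the global $\exists$-witness elsewhere to get a genuine counterexample in $M$ --- but the tools you list suffice for this.
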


\begin{proof}
By induction on the complexity of $\varphi$. 
\end{proof}

\subsection{Simple cover property} The following first-order
property, isolated by Palmgren (\cite{palmgren}), characterizes theories of reduced products associated with $\Fin$ (Corollary~\ref{C.Preservation} below). 

\begin{definition} \label{D.simple} 
A structure $M$ has the \emph{simple cover property} if for every $m\geq 1$ and all $h$-formulas $\varphi(\bar x,\bar y)$ and $\psi_j(\bar x,\bar y)$, for $j<m$, the following distributivity law holds whenever $\bar a$ is a tuple in $M$ of the appropriate sort:
\[
M\models (\forall \bar x)\left(\varphi(\bar x,\bar a) \rightarrow \bigvee_{j<m}\psi_j(\bar x,\bar a) \right)\rightarrow \bigvee_{j<m}(\forall \bar x)(\varphi(\bar x,\bar a) \rightarrow \psi_j(\bar x,\bar a)).
\]
A theory $T$ has the simple cover property if all of its models have it.
\end{definition}
In \cite[p. 380]{palyutin1}, the instance of the simple cover property as in the displayed formula in Definition~\ref{D.simple} is denoted $\alpha(\varphi,\psi_0,\dots, \psi_{m-1},\bar y)$. 
The following is \cite[Proposition~5]{palmgren} (compare with McKinsey’s Lemma, \cite[Lemma~9.1.7]{Hodg:Model}).

\begin{lemma} \label{L.scp} 
If $\cI$ is an ideal on a set $I$ and $\cP(I)/\cI$ is atomless then any reduced product $M=\prod_i M_i/\cI$ has the simple cover property. \qed 
\end{lemma}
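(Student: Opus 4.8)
The plan is to verify the distributivity law in Definition~\ref{D.simple} directly, using Palyutin's \L o\'s-type theorem (Theorem~\ref{T.Palyutin}) to translate each $h$-formula into a statement about the index set modulo $\cI$, and then exploit atomlessness of $\cP(I)/\cI$ to split the index set into finitely many pieces. Fix $m\geq 1$, $h$-formulas $\varphi(\bar x,\bar y)$ and $\psi_j(\bar x,\bar y)$ for $j<m$, and a tuple $\bar a$ in $M=\prod_i M_i/\cI$ with representing sequence $(\bar a_i)_i$. Assume $M\models (\forall\bar x)(\varphi(\bar x,\bar a)\to\bigvee_{j<m}\psi_j(\bar x,\bar a))$, and suppose toward a contradiction that for every $j<m$ we have $M\not\models(\forall\bar x)(\varphi(\bar x,\bar a)\to\psi_j(\bar x,\bar a))$. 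The latter means there is a tuple $\bar c^{(j)}$ in $M$, with representative $(\bar c^{(j)}_i)_i$, such that $M\models\varphi(\bar c^{(j)},\bar a)$ and $M\models\neg\psi_j(\bar c^{(j)},\bar a)$. By Theorem~\ref{T.Palyutin} applied to the $h$-formula $\varphi$, the set $P_j=\{i\in I\mid M_i\models\varphi(\bar c^{(j)}_i,\bar a_i)\}$ satisfies $I\sm P_j\in\cI$; and applied to $\psi_j$ (using the second equivalence), the set $Q_j=\{i\in I\mid M_i\models\neg\psi_j(\bar c^{(j)}_i,\bar a_i)\}$ is $\cI$-positive.

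The key combinatorial step uses atomlessness. Each $Q_j$ is $\cI$-positive, so by atomlessness of $\cP(I)/\cI$ one can find inside each $Q_j$ an $\cI$-positive set, and by iterating the splitting one arranges pairwise disjoint $\cI$-positive sets $D_0,\dots,D_{m-1}$ with $D_j\subseteq Q_j$. (Concretely: pick $\cI$-positive $D_0\subseteq Q_0$ with $Q_0\sm D_0$ also $\cI$-positive; then work inside the remainders; a standard finite recursion of length $m$ does it, which is exactly where atomlessness — i.e.\ every positive set splits into two positive sets — is indispensable.) Now define a new tuple $\bar c$ in $M$ by the representative that equals $\bar c^{(j)}_i$ when $i\in D_j$ and is arbitrary (say $\bar a_i$, or anything) off $\bigcup_j D_j$. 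For each $j$, on $D_j$ we have $i\in Q_j$, so $M_i\models\neg\psi_j(\bar c_i,\bar a_i)$; since $D_j$ is $\cI$-positive, Theorem~\ref{T.Palyutin} gives $M\models\neg\psi_j(\bar c,\bar a)$. Thus $M\models\bigwedge_{j<m}\neg\psi_j(\bar c,\bar a)$, i.e.\ $M\not\models\bigvee_{j<m}\psi_j(\bar c,\bar a)$.

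It remains to check $M\models\varphi(\bar c,\bar a)$, which together with the previous paragraph contradicts the hypothesis $M\models(\forall\bar x)(\varphi(\bar x,\bar a)\to\bigvee_{j<m}\psi_j(\bar x,\bar a))$. For this, consider $R=\{i\in I\mid M_i\models\varphi(\bar c_i,\bar a_i)\}$: on $D_j$ we have $\bar c_i=\bar c^{(j)}_i$ and $i$ is (modulo an $\cI$-small set) in $P_j$, so $D_j\cap(I\sm P_j)\in\cI$ and hence $D_j\sm R\in\cI$; consequently $\bigcup_{j<m}D_j\sm R\in\cI$. The only genuine worry is the behaviour off $\bigcup_j D_j$ — but note $\bigcup_{j<m}P_j$ is $\cI$-conull (each $I\sm P_j\in\cI$), so if we had chosen $\bar c$ to agree with, say, $\bar c^{(0)}_i$ on $I\sm\bigcup_j D_j$, then $I\sm R\subseteq(I\sm P_0)\cup\bigcup_{j<m}(D_j\sm P_j)\in\cI$, so $I\sm R\in\cI$ and Theorem~\ref{T.Palyutin} yields $M\models\varphi(\bar c,\bar a)$. (Since $\varphi$ is an $h$-formula this application of Theorem~\ref{T.Palyutin} is legitimate.) This completes the contradiction, so some $j<m$ must satisfy $M\models(\forall\bar x)(\varphi(\bar x,\bar a)\to\psi_j(\bar x,\bar a))$, and $M$ has the simple cover property. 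I expect the only subtle point to be the bookkeeping in the disjointification step and the careful choice of $\bar c$ off the $D_j$'s so that $\varphi$ is preserved — everything else is a direct appeal to Theorem~\ref{T.Palyutin}; the hypothesis that $\cP(I)/\cI$ is atomless enters exactly once, to produce the disjoint positive sets $D_j$.
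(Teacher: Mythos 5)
Your proof is correct. The paper itself gives no argument for Lemma~\ref{L.scp} --- it is quoted as \cite[Proposition~5]{palmgren} with a \verb|\qed| --- so there is nothing in the text to compare against, but your direct verification is exactly the standard one (and is essentially Palmgren's): translate the failure of each disjunct into an $\cI$-positive set $Q_j$ via Theorem~\ref{T.Palyutin}, disjointify into positive pieces $D_j\subseteq Q_j$ using atomlessness, and glue the witnesses $\bar c^{(j)}$ along the $D_j$'s to contradict the hypothesis. All the steps check out: the finite recursion producing pairwise disjoint $\cI$-positive $D_j\subseteq Q_j$ in an atomless quotient is a genuine (if unstated) induction, but it is routine; the verification that $M\models\neg\psi_j(\bar c,\bar a)$ only needs $D_j\subseteq\{i\mid M_i\models\neg\psi_j(\bar c_i,\bar a_i)\}$ together with upward closure of $\cI^+$; and your correction in the last paragraph --- taking $\bar c_i=\bar c^{(0)}_i$ rather than something ``arbitrary'' off $\bigcup_j D_j$ --- is needed (and note that your parenthetical first choice ``say $\bar a_i$'' is not even of the right sort, since $\bar a$ lives in the $\bar y$-variables). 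You are also right that atomlessness is used exactly once and is indispensable: for a maximal ideal the reduced product is an ultraproduct, where the distributivity law fails in general.
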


The following is \cite[Proposition~1]{palyutin1}, see also \cite[Lemma 3]{palmgren}. 

\begin{lemma}\label{lemma:booleanh} 
If $M$ has the simple cover property, then for every formula $\varphi$ there is a Boolean combination of $h$-formulas $\varphi'$ such that $M\models \varphi\leftrightarrow \varphi'$. \qed 
\end{lemma}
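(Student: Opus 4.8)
The plan is to induct on the complexity of $\varphi$, proving that the class $\mathcal{B}$ of all formulas $\chi$ for which $M\models\chi\leftrightarrow\chi'$ holds for some Boolean combination $\chi'$ of $h$-formulas is in fact the class of all formulas. First I would record the easy facts: every atomic formula is an $h$-formula (Definition~\ref{Def.h-formulas}), so atomic formulas lie in $\mathcal{B}$, and $\mathcal{B}$ is manifestly closed under $\neg$ and $\wedge$, hence under every propositional connective. Since $\forall y\,\chi$ is logically equivalent to $\neg\exists y\,\neg\chi$, it then remains only to show that $\mathcal{B}$ is closed under existential quantification, which is the one step with real content.

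For that step I would take $\varphi(\bar x,y)\in\mathcal{B}$, fix a Boolean combination $\varphi'$ of $h$-formulas with $M\models\varphi\leftrightarrow\varphi'$, so that $M\models\exists y\,\varphi\leftrightarrow\exists y\,\varphi'$, and put $\varphi'$ into disjunctive normal form. Using that $\mathcal{C}$ is closed under finite conjunctions, I can merge the positive $h$-literals inside each disjunct into a single $h$-formula, writing $\varphi'=\bigvee_{k}\bigl(\theta_k(\bar x,y)\wedge\bigwedge_{j<m_k}\neg\eta_{k,j}(\bar x,y)\bigr)$ with all $\theta_k,\eta_{k,j}$ in $\mathcal{C}$ (inserting the trivial $h$-formula $y=y$ when a disjunct has no positive literal, and noting that if $m_k=0$ then $\exists y\,\theta_k$ is already an $h$-formula). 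Since $\exists y$ distributes over $\bigvee$, the task reduces to showing $\exists y\,\bigl(\theta(\bar x,y)\wedge\bigwedge_{j<m}\neg\eta_j(\bar x,y)\bigr)\in\mathcal{B}$ for $h$-formulas $\theta,\eta_0,\dots,\eta_{m-1}$ with $m\geq 1$.

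The key move is to pass to the negation. The formula $\neg\exists y\,\bigl(\theta\wedge\bigwedge_{j<m}\neg\eta_j\bigr)$ is logically equivalent to $\forall y\,\bigl(\theta\to\bigvee_{j<m}\eta_j\bigr)$, which is precisely the antecedent of an instance of the simple cover property (Definition~\ref{D.simple}) with $y$ in the role of the quantified tuple and the free variables $\bar x$ in the role of the parameter tuple — and the simple cover property is demanded for every choice of parameter tuple from $M$. Combining this instance with the trivially valid converse implication gives $M\models\forall y\,\bigl(\theta\to\bigvee_{j<m}\eta_j\bigr)\leftrightarrow\bigvee_{j<m}\forall y\,(\theta\to\eta_j)$. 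It then remains to see that each $\forall y\,(\theta\to\eta_j)$ is a Boolean combination of $h$-formulas: a case split on $\exists y\,\theta$ gives the logical validity $\forall y\,(\theta\to\eta_j)\leftrightarrow\neg\exists y\,\theta\,\vee\,\bigl(\exists y\,\theta\wedge\forall y\,(\theta\to\eta_j)\bigr)$, in which $\exists y\,\theta$ is an $h$-formula and $\exists y\,\theta\wedge\forall y\,(\theta\to\eta_j)$ is an $h$-formula by the fourth closure clause of Definition~\ref{Def.h-formulas}. Hence $\neg\exists y\,\bigl(\theta\wedge\bigwedge_j\neg\eta_j\bigr)$, and therefore $\exists y\,\bigl(\theta\wedge\bigwedge_j\neg\eta_j\bigr)$ as well, is $M$-equivalent to a Boolean combination of $h$-formulas, closing the induction.

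I expect the main obstacle to be simply identifying the right reduction: recognizing that, after passing to negations and disjunctive normal form, each disjunct's negation has exactly the shape $\forall y\,(\theta\to\bigvee_j\eta_j)$ to which the simple cover property applies, and then noticing that $\forall y\,(\theta\to\eta_j)$ — not literally an $h$-formula — becomes a Boolean combination of $h$-formulas via the split on $\exists y\,\theta$. Once this is in place, the remainder is bookkeeping with normal forms and the closure clauses of Definition~\ref{Def.h-formulas}.
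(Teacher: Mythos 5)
Your proof is correct and is essentially the intended argument: the paper itself gives no proof of this lemma, delegating it to Palyutin's Proposition~1 and Palmgren's Lemma~3, whose proofs proceed exactly as you do — induction on formulas, with the existential step handled by passing to disjunctive normal form, negating each disjunct to obtain the antecedent of a simple-cover-property instance, and recovering $\forall y\,(\theta\to\eta_j)$ as a Boolean combination of $h$-formulas via the case split on $\exists y\,\theta$ and the fourth closure clause of Definition~\ref{Def.h-formulas}. The one step worth flagging — that the simple cover property applies with the free variables $\bar x$ in the role of the parameter tuple $\bar a$ — is indeed fine, since the property is demanded for every choice of parameters from $M$.
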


We interrupt the proceedings to record a simple corollary, hitherto overlooked in the literature to the best of our knowledge, yet noticed in \cite[Theorem 2.13]{fronteau2023produits}. It is well-known that a consistent theory $T$ is preserved by homomorphisms if and only if it has a set of axioms consisting of positive sentences (\cite[Theorem~5.2.13]{ChaKe}). In the following we offer a related, but more specific result. 

\begin{corollary} \label{C.Preservation} 
For a complete theory $T$ the following are equivalent. 
\begin{enumerate}
	\item \label{1.Preservation} $T$ is preserved under reduced products.
	\item \label{2.Preservation} $T$ has the simple cover property. 
	\item \label{3.Preservation} $T$ is the theory of $\prod_i M_i/\Fin$ for some structures $M_i$, for $i\in \bbN$, of the same language. 
\end{enumerate}
\end{corollary}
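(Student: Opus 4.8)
The plan is to prove the three implications \ref{1.Preservation}$\Rightarrow$\ref{3.Preservation}$\Rightarrow$\ref{2.Preservation}$\Rightarrow$\ref{1.Preservation}, the last of which is the only one requiring real work, while the first two are more or less unwrapping definitions together with the lemmas already stated.

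\emph{\ref{1.Preservation}$\Rightarrow$\ref{3.Preservation}.} Suppose $T$ is preserved under reduced products. Pick any model $N\models T$. Then the reduced power $\prod_{n\in\bbN} N/\Fin$ is again a model of $T$ (it is a reduced product of copies of $N$), and since $T$ is complete, $T$ is exactly the theory of $\prod_n N/\Fin$. Taking $M_n := N$ for all $n$ gives~\ref{3.Preservation}. (One should remark that $\Fin$ on $\bbN$ is proper, so this reduced power is not the trivial one-element structure, and that $N$ embeds into the reduced power diagonally — though completeness of $T$ makes the embedding superfluous for the argument.)

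\emph{\ref{3.Preservation}$\Rightarrow$\ref{2.Preservation}.} Suppose $T=\Th(\prod_n M_n/\Fin)$. Since $\cP(\bbN)/\Fin$ is atomless, Lemma~\ref{L.scp} tells us that $M=\prod_n M_n/\Fin$ has the simple cover property. The simple cover property, as stated in Definition~\ref{D.simple}, is a property expressed by a scheme of first-order sentences (for each choice of $m$, $\varphi$, and the $\psi_j$'s, the displayed implication, universally quantified over $\bar a$, is a single first-order sentence $\alpha(\varphi,\psi_0,\dots,\psi_{m-1})$). Hence ``$M$ has the simple cover property'' is equivalent to ``$M$ satisfies each sentence in this scheme'', which is part of $\Th(M)=T$; so every model of $T$ satisfies the scheme, i.e.\ every model of $T$ has the simple cover property, which is precisely the definition of $T$ having the simple cover property.

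\emph{\ref{2.Preservation}$\Rightarrow$\ref{1.Preservation}.} This is the substantive step. Let $T$ have the simple cover property, let $\cI$ be any ideal on a set $I$, and let $N_i\models T$ for $i\in I$; we must show $\prod_i N_i/\cI\models T$. Fix a sentence $\sigma\in T$; we want $\prod_i N_i/\cI\models\sigma$. Apply Lemma~\ref{lemma:booleanh} inside some fixed model of $T$ — but more carefully: since $T$ has the simple cover property and the conclusion of Lemma~\ref{lemma:booleanh} for a \emph{sentence} $\sigma$ says $M\models\sigma\leftrightarrow\sigma'$ for a Boolean combination $\sigma'$ of $h$-\emph{sentences}, and this equivalence holds in every model of $T$ (the proof of Lemma~\ref{lemma:booleanh} only uses the simple cover property, hence goes through uniformly), we get that $T\vdash\sigma\leftrightarrow\sigma'$. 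Since $\sigma\in T$, also $\sigma'\in T$, so each $N_i\models\sigma'$. Now $\sigma'$ is a Boolean combination of $h$-sentences $\theta_1,\dots,\theta_k$; for each $i$ record the truth value $\epsilon_i\in\{0,1\}^k$ of $(\theta_1,\dots,\theta_k)$ in $N_i$. By Theorem~\ref{T.Palyutin} applied to each $\theta_j$, the truth value of $\theta_j$ in $\prod_i N_i/\cI$ depends only on whether $\{i : N_i\models\neg\theta_j\}\in\cI$; that is, the set of $i$ on which each $h$-sentence holds determines its truth in the reduced product modulo $\cI$. The point is that the Boolean function computing $\sigma'$ from the truth values of the $\theta_j$ is the \emph{same} in every structure, so since $\sigma'$ evaluates to ``true'' from $\epsilon_i$ in every $N_i$, and the truth values of the $\theta_j$ in the reduced product are themselves Boolean (``$\cI$-majority'') combinations of the $\epsilon_i$'s, one checks that $\sigma'$ — hence $\sigma$ — holds in $\prod_i N_i/\cI$. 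I expect the main obstacle to be exactly this last bookkeeping: one has to make sure that the equivalence $\sigma\leftrightarrow\sigma'$ provided by Lemma~\ref{lemma:booleanh} is a \emph{consequence of $T$} (not merely true in one model), so that it transfers to all the $N_i$, and then combine Theorem~\ref{T.Palyutin} across the finitely many $h$-subformulas with the Boolean structure of $\sigma'$; a clean way to phrase it is to note that an $h$-sentence $\theta$ and its negation are each preserved in the appropriate direction by Theorem~\ref{T.Palyutin}, so any Boolean combination of $h$-sentences that holds in all fibers holds in the reduced product. Finally, since $T$ is complete and every sentence of $T$ holds in $\prod_i N_i/\cI$, we conclude $\prod_i N_i/\cI\models T$, completing the cycle.
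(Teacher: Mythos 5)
Your decomposition and all three implications match the paper's proof: (1)$\Rightarrow$(3) is immediate, (3)$\Rightarrow$(2) is Lemma~\ref{L.scp} (plus the observation, left implicit in the paper, that the simple cover property is a first-order scheme and hence propagates from one model of the complete theory $T$ to all of them), and (2)$\Rightarrow$(1) combines Lemma~\ref{lemma:booleanh} with Theorem~\ref{T.Palyutin}. One caution on the last step: the ``clean way to phrase it'' you propose --- that any Boolean combination of $h$-sentences true in all fibers is true in the reduced product --- is false in general. If $\theta_1$ holds exactly on a set $A$ with both $A$ and $I\setminus A$ $\cI$-positive, and $\theta_2$ holds exactly on $I\setminus A$, then $\theta_1\lor\theta_2$ holds in every fiber, yet Theorem~\ref{T.Palyutin} gives $\prod_i N_i/\cI\models\neg\theta_1\land\neg\theta_2$. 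What saves your argument is precisely that all the $N_i$ model the complete theory $T$, so each $h$-sentence $\theta_j$ has the \emph{same} truth value in every fiber; hence $\{i\mid N_i\models\neg\theta_j\}$ is either $\emptyset$ (which is in $\cI$) or all of $I$ (which is not, as $\cI$ is proper), and Theorem~\ref{T.Palyutin} then assigns $\theta_j$ that common value in the reduced product. This is exactly what the paper makes explicit by noting that $X_\psi=\{i\mid M_i\models\lnot\psi\}$ is always $\emptyset$ or $I$: your tuple $\epsilon_i$ is constant in $i$, and once that is said the remaining ``bookkeeping'' is a one-line induction on Boolean connectives.
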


\begin{proof} \eqref{1.Preservation} trivially implies \eqref{3.Preservation} and 
	 \eqref{3.Preservation} implies \eqref{2.Preservation} is Lemma~\ref{L.scp}.

	To prove that \eqref{2.Preservation} implies \eqref{1.Preservation}, assume that $T$ has the simple cover property, $M_i\models T$ for $i\in I$, and~$\cI$ is an ideal on $I$. Fix a sentence $\psi$ in the language of~$T$. We claim that $\prod_i M_i/\cI\models \psi$ if $\psi\in T$. If $\psi$ is an $h$-formula, this follows from Theorem~\ref{T.Palyutin}. Otherwise, $\psi$ is a Boolean combination of finitely many $h$-formulas by Lemma \ref{lemma:booleanh}. 
 
Since $T$ is complete, for every formula $\psi$, the set $X_\psi=\{i\mid M_i\models\lnot \psi\}$ is either empty or equal to $I$. Thus we need to prove that $\prod_i M_i/\cI\models \psi$ if and only if $X_\psi=\emptyset$ by induction on the complexity of $\psi$. This is true for $h$-formulas. Since $X_\psi\cap X_\varphi=X_{\psi\cap \varphi}$, the set of formulas for which this is true is closed under conjunctions. Finally, $X_\varphi=\emptyset$ if and only if $X_{\lnot\varphi}=I$ if and only if $X_{\lnot\varphi}\neq \emptyset$.
\end{proof} 

A semantic analogue of Corollary~\ref{C.Preservation} is given in \cite[Proposition~3.5]{farah2022betweenI}. It provides a functor $K$ from $\calL$-structures into $\calL$-structures with the property that if $\cI$ is an ideal and $\cP(\bbN)/\cI$ is an atomless Boolean algebra, then $K(A)$ is elementarily equivalent to $A^\bbN/\cI$. From the definition of $K$ it is obvious that $K^2(A)\cong K(A)$ for all~$A$. This has a useful consequence that, assuming $\CH$, for a countable (or separable) $A$, $A^\bbN/\Fin$ is isomorphic to the ultrapower of $K(A)$ associated with any nonprincipal ultrafilter on $\bbN$ (\cite[Theorem~E]{farah2022betweenI}). The latter fact has an even more useful consequence, that the quotient map from $A^\bbN/\Fin$ onto $A^\bbN/\cU$ has a right inverse if $\cU$ is a P-point ultrafilter (\cite[Theorem~B]{farah2022betweenI}).

Another ingredient in the proof of Theorem~\ref{T.cs} asserts that the theory of a reduced product is stable if and only if all $h$-formulas are stable in this theory.

\subsection{Stability and order property in models with simple cover property}\label{S.Order}
	
	In this section we analyze reduced products whose theory is unstable, towards proving Theorem~\ref{T.non-saturated} and Theorem~\ref{T.non-saturated++}. Even though originally introduced as a smallness property of the set of types over a small set, the following (equivalent) definition of stability fits our purposes (see \cite[Definition~0.7 and Theorem~2.15]{Pillay.IntroStab}).

\begin{definition}
Let $T$ be a complete first-order theory, and let $\varphi(\bar x,\bar y)$ be a formula of the same language. 
\begin{itemize}
\item Let $k\in\bbN$. $\varphi(\bar x,\bar y)$ has the $k$-order property (relative to $T$) if there exist $M$, a model of $T$, and tuples of the appropriate sort $\bar a_i$ and $\bar b_i$ in $M$, for $i<k$, such that $M\models\varphi(\bar a_i,\bar b_j)$ if and only if $i\leq j$;
\item $\varphi(\bar x,\bar y)$ has the order property (relative to $T$) if it has the $k$-order property for every $k\in\bbN$, and $\varphi$ is stable (relative to $T$) otherwise;
\item $T$ is stable if every formula of its language is stable relative to~$T$. 
\end{itemize}
\end{definition}
Note that, since $T$ is complete, the $k$-order property for a fixed $k$ does not depend on the model of $T$ we choose.

If $M$ is a structure and $k\geq 1$, a binary relation ${\leq}$ on $k$-tuples in $M$ is said to be \emph{$h$-definable} if there is a $2k$-ary $h$-formula $\varphi(\bar x,\bar y)$ such that for all $\bar x,\bar y\in M^k$ we have that $\bar x\leq\bar y$ if and only if $\varphi(\bar x,\bar y)$. 
We will write 
\[
\bar x\leqphi \bar y
\] 
if $\varphi(\bar x, \bar y)$ (suppressing the structure when there is no danger of confusion), with the understanding that $\leq_\varphi$ need not be an ordering on the $k$-tuples in the ambient model $M$. 

\begin{definition}
We say that $\leqphi$ is reflexive on its domain if $\bar a\leqphi\bar b$ implies $\bar a\leqphi\bar a$ and $\bar b\leqphi\bar b$. (Of course, replacing $\varphi(\bar x,\bar y)$ with the formula $(\bar x=\bar y)\lor \varphi(\bar x,\bar y)$ would provide a relation reflexive on its domain, but we have to be careful because the latter is not obviously an $h$-formula.) By Theorem~\ref{T.Palyutin}, transitivity of an $h$-definable relation survives a reduced product construction. 

We call a set $C$ of $k$-tuples in the structure $M$ a $\leqphi$-chain if the restriction of $\leqphi$ to $C$ is a linear order on this set. If $\leqphi$ is a transitive relation which is reflexive on its domain and is definable in a complete theory $T$, then its \emph{height} is the supremum of lengths of finite $\leqphi$-chains in some model of $T$. 
\end{definition}

Since `there exists an $\leqphi$-chain of length $n$’ is a first-order statement, the height of $\leqphi$ in $T$ does not depend on the choice of a model. 

\begin{lemma} \label{L.dichotomy} 
Let $T$ be a complete theory with the simple cover property, and let $\leqphi$ be an $h$-definable transitive relation which is reflexive on its domain. Then the height of $\leqphi$ is either 0, 1 or infinite.
\end{lemma}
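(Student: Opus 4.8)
The plan is to pass to a reduced-product model and then manufacture arbitrarily long $\leqphi$-chains by interpolating on fibers. Since the height of $\leqphi$ does not depend on the model of $T$, and since by Corollary~\ref{C.Preservation} the complete theory $T$, having the simple cover property, is the theory of some reduced product, I may assume $M = \prod_{i\in\bbN} M_i/\Fin$. Suppose the height is at least $2$; I will show it is infinite. Because ``there is a $\leqphi$-chain of length $2$'' is first-order and $T$ is complete, $M$ itself contains one, i.e.\ there are $k$-tuples $\bar a\neq\bar b$ in $M$ with $\bar a\leqphi\bar b$ and $\bar b\not\leqphi\bar a$; by reflexivity on the domain also $\bar a\leqphi\bar a$ and $\bar b\leqphi\bar b$. (Here I use that reflexivity on the domain and transitivity of $\leqphi$ are first-order, hence hold in every model of $T$, in particular in $M$.)

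Next I would transfer these facts to the fibers using Theorem~\ref{T.Palyutin}: fixing representing sequences $\tilde a,\tilde b$, the three sets $\{i : M_i\models\varphi(\tilde a_i,\tilde b_i)\}$, $\{i : M_i\models\varphi(\tilde a_i,\tilde a_i)\}$, $\{i : M_i\models\varphi(\tilde b_i,\tilde b_i)\}$ are cofinite, while $F := \{i : M_i\models\neg\varphi(\tilde b_i,\tilde a_i)\}$ is infinite. Let $E$ be the intersection of $F$ with those three cofinite sets; then $E$ is infinite, and for every $i\in E$ we must have $\tilde a_i\neq\tilde b_i$, since $\varphi(\tilde a_i,\tilde b_i)$ and $\neg\varphi(\tilde b_i,\tilde a_i)$ both hold there.

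Now fix $n\geq 1$, partition $E$ into infinite pieces $E_1,\dots,E_n$, and for $0\le j\le n$ put $\bar c^{(j)} = \pi(\tilde c^{(j)})$ where $\tilde c^{(j)}_i := \tilde b_i$ for $i\in E_1\cup\dots\cup E_j$ and $\tilde c^{(j)}_i := \tilde a_i$ otherwise. For each $j<n$, the set of $i$ with $M_i\models\neg\varphi(\tilde c^{(j)}_i,\tilde c^{(j+1)}_i)$ is finite: on $E_1\cup\dots\cup E_j$ the pair is $(\tilde b_i,\tilde b_i)$, on $E_{j+1}$ it is $(\tilde a_i,\tilde b_i)$, on $E_{j+2}\cup\dots\cup E_n$ it is $(\tilde a_i,\tilde a_i)$, and off $E$ it is $(\tilde a_i,\tilde a_i)$ — in the first three cases the relevant formula holds for \emph{all} such $i$ since they lie in $E$, and in the last case it holds off a finite set. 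Hence $\bar c^{(j)}\leqphi\bar c^{(j+1)}$ by Theorem~\ref{T.Palyutin}, so $\bar c^{(j)}\leqphi\bar c^{(l)}$ for $j\le l$ by transitivity. Conversely, for $j<l$ the set $\{i : M_i\models\neg\varphi(\tilde c^{(l)}_i,\tilde c^{(j)}_i)\}$ contains $E_{j+1}$, where the pair is $(\tilde b_i,\tilde a_i)$ and $\neg\varphi(\tilde b_i,\tilde a_i)$ holds, hence is infinite, so $\bar c^{(l)}\not\leqphi\bar c^{(j)}$; in particular the $\bar c^{(j)}$ are pairwise distinct. Therefore $\{\bar c^{(0)},\dots,\bar c^{(n)}\}$ is a $\leqphi$-chain order-isomorphic to $\{0<1<\dots<n\}$, and since $n$ was arbitrary the height is infinite. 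Finally, if the height is not at least $2$ then it is $1$ when $\{\bar a : \bar a\leqphi\bar a\}$ is nonempty and $0$ otherwise.

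The main obstacle is the bookkeeping on fibers: the $M_i$ need not be models of $T$, so reflexivity of $\leqphi$ on its domain is unavailable fiber-by-fiber. The fix is to feed the two ``reflexivity instances'' $\varphi(\bar a,\bar a)$ and $\varphi(\bar b,\bar b)$ — which are cofinite on fibers by Theorem~\ref{T.Palyutin} — into the definition of $E$, and to let every $\bar c^{(j)}$ coincide with $\tilde a_i$ on all of $\bbN\sm E$, so that each ``bad fiber set'' is contained in a fixed finite set while the reversed relations still fail on the infinite pieces $E_{j+1}$.
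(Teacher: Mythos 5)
Your proof is correct and takes essentially the same route as the paper's: reduce to a two-element chain $\{\bar a,\bar b\}$ and then manufacture arbitrarily long $\leqphi$-chains by mixing $\bar a$ and $\bar b$ coordinatewise along a partition of (a subset of) the index set into infinitely many infinite pieces, with Theorem~\ref{T.Palyutin} verifying the order relations. The only difference is that the paper works in the reduced power $M^\bbN/\Fin$ of a single model $M$ of $T$ containing the $2$-chain, so the fiberwise conditions hold on every coordinate and your extra bookkeeping with the cofinite and infinite fiber sets (the set $E$) is not needed.
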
 

\begin{proof} 
For convenience we assume that $\varphi$ is a binary formula, i.e., $k=1$ (in case we deal with tuples, the proof is only notationally different). By Corollary~\ref{C.Preservation}, $T$ is preserved by taking reduced powers, and it suffices to prove that if there are a model $M$ of $T$ and a $\leqphi$-chain of length $2$ in $M$, then in $M^\bbN/\Fin$ there is an infinite $\leqphi$-chain, as the latter is a model of $T$. We can therefore assume that there exist elements $a,b$ in $M$ such that $a\leqphi b$ and not $b\leqphi a$ and moreover $\leqphi$ is reflexive on $\{a,b\}$.

Let $\bbN=\bigsqcup_n A_n$ be a partition of $\bbN$ into infinite sets. Define $c(n)$ by its representing sequence, where 
\[
c(n)_i=\begin{cases}\tilde b_i&\text{ if } i\in\bigcup_{j\leq n}A_j\\
\tilde a_i&\text{ if } i\in\bigcup_{j> n}A_j
\end{cases}.
\]
Since $h$-formulas pass to reduced products by Theorem~\ref{T.Palyutin}, we have that $c(n)$, for $n\in \bbN$, is a $\leqphi$-chain.
\end{proof}

In Theorem~\ref{thm:palyutinstab} below we prove that the theory of a model with the simple cover property can fail to be stable only in a very specific way. It is well-known that every complete unstable theory $T$ either has the strong order property (SOP) or the independence property (IP). A theory $T$ is said to have the \emph{independence property} if there are a formula $\varphi(\bar x,\bar y)$, a model $M$ of $T$, and  elements $\bar a_i$ and $\bar b_S$, for $i\in\bbN$ and $S\subseteq\bbN$ such that $M\models \varphi(\bar a_i,\bar b_S)$ if and only if $i\in S$. The independence property and its negation (NIP) form one of the important dividing lines in model theory.  
The equivalence between \eqref{itm:1} and \eqref{itm:3}  is \cite[Theorem 3]{palyutin1}, but we provide a self-contained proof.

\begin{theorem}\label{thm:palyutinstab}
If $M$ has the simple cover property and $T=\Th(M)$, then the following are equivalent. 
\begin{enumerate}
	\item\label{itm:1} $T$ is unstable,
	\item\label{itm:2} there exists an $h$-formula that has the order property (for $T$),
	\item \label{itm:3} there exist an $h$-formula $\varphi(\bar x,\bar y)$ and $\bar a,\bar b\in M^{|\bar y|}$ such that $\varphi(M,\bar a)$ and $\varphi(M,\bar b)$ intersect nontrivially but are different,
	%\item \label{itm:3} there exists an $h$-formula $\varphi(\bar x,\bar y)$ and $\bar a,\bar b\in M^{|\bar y|}$ such that $\varphi(M,\bar a)$ is nonempty and properly contained in $\varphi(M,\bar b)$;
	\item \label{itm:4} there are $k\in \bbN$, an $h$-definable transitive relation $\leqphi$ on $M^k$ which is reflexive on its domain, and elements $\bar a,\bar b\in M^k$ such that $\{\bar a,\bar b\}$ is a $\leqphi$-chain. %$\leqphi$ $\bar a\leqphi\bar b$ but $\bar b\nleq_h \bar a$.
\item \label{itm:5} $T$ has the independence property. 
\end{enumerate}
\end{theorem}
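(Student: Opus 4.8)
The natural strategy is to prove a cycle of implications, exploiting the special structure forced by the simple cover property. The implications \eqref{itm:5} $\Rightarrow$ \eqref{itm:1} and \eqref{itm:4} $\Rightarrow$ \eqref{itm:1} are trivial (IP implies instability; a transitive reflexive-on-its-domain $h$-definable relation with a chain of length $2$ has, by Lemma~\ref{L.dichotomy}, infinite height, and an infinite chain witnesses the order property of $\varphi$, hence instability). So the real content is to get from \eqref{itm:1} back around to each of \eqref{itm:2}, \eqref{itm:3}, \eqref{itm:4}, \eqref{itm:5}. The plan is: \eqref{itm:1} $\Rightarrow$ \eqref{itm:2} $\Rightarrow$ \eqref{itm:3} $\Rightarrow$ \eqref{itm:4} $\Rightarrow$ \eqref{itm:1}, and separately \eqref{itm:4} $\Rightarrow$ \eqref{itm:5} (or \eqref{itm:3} $\Rightarrow$ \eqref{itm:5}), which together with \eqref{itm:5} $\Rightarrow$ \eqref{itm:1} closes everything.

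For \eqref{itm:1} $\Rightarrow$ \eqref{itm:2}: if $T$ is unstable, some formula $\chi$ has the order property. By Lemma~\ref{lemma:booleanh}, $\chi$ is equivalent (in $M$, hence in $T$ since $T$ is complete) to a Boolean combination of $h$-formulas. A standard Ramsey/pigeonhole argument on the order-property witnesses $(\bar a_i,\bar b_j)$ — passing to an infinite monochromatic subset for the finite "type" of which $h$-formulas among the finitely many appearing hold on $(\bar a_i,\bar b_j)$ as a function of the order type of $(i,j)$ — isolates a single $h$-formula (or its negation) that already orders an infinite sequence; since the negation of an $h$-formula has the order property iff the $h$-formula does (reverse the roles/order), we may take it to be an $h$-formula. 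This yields \eqref{itm:2}. For \eqref{itm:2} $\Rightarrow$ \eqref{itm:3}: if the $h$-formula $\varphi(\bar x,\bar y)$ has the order property, take witnesses with $M \models \varphi(\bar a_i,\bar b_j) \iff i \le j$; then $\varphi(M,\bar b_0) \subsetneq \varphi(M,\bar b_1)$ are distinct but overlap (e.g. $\bar a_0$ is in both), giving \eqref{itm:3} after relabelling — though one must pass to the relevant reduced power / monster model where the witnesses live, which is fine since these are all models of $T$ and \eqref{itm:3} is about *some* model; alternatively phrase \eqref{itm:3} directly in terms of $T$.

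The crux is \eqref{itm:3} $\Rightarrow$ \eqref{itm:4}: from an $h$-formula $\varphi$ with $\varphi(M,\bar a)$ and $\varphi(M,\bar b)$ overlapping but distinct, one must manufacture an $h$-\emph{definable transitive} relation. The idea is to define, on $2|\bar y|$-tuples $\bar z$ of the sort of $(\bar x,\bar x')$... more precisely, define $\leq$ on $|\bar y|$-tuples by $\bar u \leqphi \bar v$ iff $\forall \bar x\,(\varphi(\bar x,\bar u) \to \varphi(\bar x,\bar v))$, i.e. "$\varphi(M,\bar u) \subseteq \varphi(M,\bar v)$" — but this inclusion statement is exactly of the shape $\forall x(\varphi \to \psi)$ with $\varphi,\psi$ atomic-or-$h$, hence is itself an $h$-formula by Definition~\ref{Def.h-formulas} (this is precisely why the clause $\exists x\varphi \wedge \forall x(\varphi\to\psi)$ is built into $h$-formulas). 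This relation is manifestly transitive and reflexive on its domain, and the hypothesis that $\varphi(M,\bar a)$ and $\varphi(M,\bar b)$ overlap but differ must be leveraged — possibly after symmetrizing, e.g. replacing $\bar b$ by a tuple coding $\varphi(M,\bar a)\cap\varphi(M,\bar b)$ or $\varphi(M,\bar a)\cup\varphi(M,\bar b)$, which one can access in a reduced power by Theorem~\ref{T.Palyutin} — to produce $\bar a',\bar b'$ with $\bar a' \leqphi \bar b'$ and not $\bar b' \leqphi \bar a'$, i.e. a $\leqphi$-chain of length $2$. I expect this is where the argument needs the most care: ensuring the witnessing tuples genuinely form a strict $\leqphi$-chain (not merely incomparable), and that the symmetrization step stays within $h$-definability. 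This is the main obstacle. Finally, \eqref{itm:3} $\Rightarrow$ \eqref{itm:5}: this is where the simple cover property does heavy lifting — one shows the "overlapping but distinct" configuration for an $h$-formula, combined with the distributivity law of Definition~\ref{D.simple} applied in reduced powers, lets one build, in a suitable reduced power of $M$ (a model of $T$), families $\bar a_i$ and $\bar b_S$ coding arbitrary subsets $S \subseteq \bbN$ — essentially by placing "$\bar a$-like" versus "$\bar b$-like" behaviour on coordinates according to membership in $S$, exactly as in the proof of Lemma~\ref{L.dichotomy} — witnessing IP; alternatively route \eqref{itm:5} through \eqref{itm:4} by a similar coordinatewise construction. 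The remaining implications \eqref{itm:4} $\Rightarrow$ \eqref{itm:1} and \eqref{itm:5} $\Rightarrow$ \eqref{itm:1} are immediate, completing the cycle.
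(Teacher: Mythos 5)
Your overall architecture coincides with the paper's: both proofs run the cycle \eqref{itm:1}$\Rightarrow$\eqref{itm:2}$\Rightarrow$\eqref{itm:3}$\Rightarrow$\eqref{itm:4}$\Rightarrow$\eqref{itm:2}, using Lemma~\ref{lemma:booleanh} for the first step (your Ramsey argument is essentially the standard proof that a Boolean combination of stable formulas is stable, which the paper simply invokes as a known fact), Lemma~\ref{L.dichotomy} for the step back from \eqref{itm:4}, and a coordinatewise ``$\tilde a$-versus-$\tilde b$'' splitting over an atomless quotient for the independence property. Those parts of your outline are correct and match the paper.

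The one genuine gap is exactly the step you flag as ``the main obstacle'', namely \eqref{itm:3}$\Rightarrow$\eqref{itm:4}. You correctly observe that ``$\emptyset\neq\varphi(M,\bar u)\subseteq\varphi(M,\bar v)$'' is $h$-expressible thanks to the clause $\exists x\varphi\wedge\forall x(\varphi\rightarrow\psi)$ of Definition~\ref{Def.h-formulas}, but under that relation $\bar a$ and $\bar b$ may simply be incomparable, and your proposed repair --- producing a tuple that ``codes'' $\varphi(M,\bar a)\cap\varphi(M,\bar b)$, possibly in a reduced power --- does not work as stated: there is no reason the intersection should again be an instance $\varphi(M,\bar c)$ of the same formula, and Theorem~\ref{T.Palyutin} gives you no such tuple. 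The paper's resolution is to change the formula rather than hunt for a new parameter: assuming without loss of generality that some $\bar z$ lies in $\varphi(M,\bar b)\setminus\varphi(M,\bar a)$, set $\psi(\bar x,\bar y)=\varphi(\bar x,\bar y)\wedge\varphi(\bar x,\bar b)$, an $h$-formula with the parameter $\bar b$ absorbed. Then $\psi(M,\bar a)=\varphi(M,\bar a)\cap\varphi(M,\bar b)$ is nonempty by the nontrivial-intersection hypothesis and is properly contained in $\psi(M,\bar b)=\varphi(M,\bar b)$ because of $\bar z$; the relation defined by $\theta(\bar x,\bar y)=\exists\bar z\,\psi(\bar z,\bar x)\wedge\forall\bar z(\psi(\bar z,\bar x)\rightarrow\psi(\bar z,\bar y))$ is then an $h$-definable transitive relation, reflexive on its domain, for which $\{\bar a,\bar b\}$ is a strict $2$-chain. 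With this one trick supplied, the remainder of your outline goes through as in the paper.
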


\begin{proof}
We first prove that \eqref{itm:1} and \eqref{itm:2} are equivalent. Clearly~\eqref{itm:2} implies~\eqref{itm:1}. For the direct implication, note that a Boolean combination of stable formulas is again stable, and therefore by Lemma~\ref{lemma:booleanh} if $T$ is unstable and has the simple cover property, then one of the culprit non-stable formulas must be an $h$-formula.

Clearly \eqref{itm:2} implies \eqref{itm:3}. 

\eqref{itm:3} implies \eqref{itm:4}: Let $\varphi(\bar x,\bar y)$ and $\bar a,\bar b\in M^{|\bar y|}$ be witnessing \eqref{itm:3}. Without loss of generality we can assume there is $\bar z\in\varphi(M,\bar b)\setminus\varphi(M,\bar a)$. 

Let $\psi(\bar x,\bar y)=\varphi(\bar x,\bar y)\wedge \varphi(\bar x,\bar b)$; this is an $h$-formula. Notice that $\emptyset\neq \psi(M,\bar a)\subsetneq\psi(M,\bar b)$.
Let
\[
\theta(\bar x,\bar y)=\exists \bar z\,\psi(\bar z,\bar x)\wedge\forall\bar z(\psi(\bar z,\bar x)\Rightarrow \psi(\bar z,\bar y)),
\]
and notice that $\theta(\bar x,\bar y)$ holds if and only if $\emptyset\neq \psi(M,\bar x)\subseteq\psi(M,\bar y)$. Also, $\theta$ is an $h$-formula. The relation $\leqphi$ defined by setting $\bar x\leqphi\bar y$ if and only if $\theta(\bar x,\bar y)$ is transitive and reflexive on its domain and, together with the chain $\{\bar a, \bar b\}$, witnesses \eqref{itm:4}, 

\eqref{itm:4} implies \eqref{itm:2}: By Lemma~\ref{L.dichotomy}, if $\varphi(\bar x, \bar y)$ is an $h$-formula that defines a transitive relation which is reflexive on its domain and has height at least $2$, then $\varphi(\bar x, \bar y)$ has the order property. 

Since \eqref{itm:5} implies \eqref{itm:1}, it remains to prove that \eqref{itm:1} implies \eqref{itm:5}. 
Suppose that $T$ is unstable. We will prove that it has the independence property. By Lemma~\ref{L.scp}, $M$ has the simple cover property. Hence Theorem~\ref{thm:palyutinstab} implies that we can find an $h$-formula $\varphi(x,y)$ and $a$ and $b$ in $M$ such that
\[
M\models \varphi(a,a)\wedge\varphi(a, b)\wedge\varphi( b, b)\wedge\neg\varphi(b,a).
\]
(For convenience, we assume that $\varphi$ is a binary formula).
Lift $a$ and $b$ to $\tilde a=(\tilde a_i)$ and $\tilde b=(\tilde b_i)$ in $\prod_iM_i$. Since $\varphi$ is an $h$-formula, then
\[
X:=\{i\mid M_i\models \varphi(\tilde a_i,\tilde a_i)\wedge \varphi(\tilde a_i,\tilde b_i)\wedge \varphi(\tilde b_i,\tilde a_i)\}
\]
has complement in $\cI$. Let 
\[
Y=\{i\mid M_i\models\neg\varphi(\tilde b_i,\tilde a_i)\}\cap X.
\]
Since $\varphi$ is an $h$-formula, $Y$ is $\cI$-positive. As $\cP(I)/\cI$ is atomless, there are disjoint $\cI$-positive sets $Y_n\subseteq Y$, for $n\in\bbN$. Define $c(n)$, for $n\in\bbN$, by
\[
c(n)_i=\begin{cases}\tilde b_i&\text{ if } i\in Y_n\\
	\tilde a_i&\text{ otherwise} 
\end{cases}.
\]
If $S\subseteq\bbN$, let
\[
d(S)_i=\begin{cases}\tilde b_i&\text{ if } i\in Y_n, n\in S\\
	\tilde a_i&\text{ otherwise}
\end{cases}.
\]
By $\pi\colon\prod_iM_i\to M$ we denote the quotient map. If $n\in S$, then $\{i\mid M_i \vDash \varphi(c(n)_i,d(S)_i)\}\supseteq X$, and therefore 
\[
M\models \varphi(\pi(c(n)),\pi(d(S))).
\]
If $n\notin S$, we have that $Y_n\subseteq \{i\mid M_i\models\neg \varphi(c(n)_i,d(S)_i)\}$. Since $Y_n$ is $\cI$-positive, and $\varphi$ is an $h$-formula, this implies that 
\[
M\models\neg\varphi(\pi(c(n)),\pi(d(S)).
\]
This concludes the proof.
\end{proof}

\section{Failure of saturation}

We will now prove a refinement of Theorem~\ref{T.non-saturated}. If $\cI$ is an ideal on a set $I$ and $A\subseteq I$, then $\cI\rs A$ denotes the ideal $\cI\cap \cP(A)$ on $A$. 

\begin{theorem}\label{T.non-saturated++}
Suppose that $\cI$ is an ideal on a set $I$ such that the Boolean algebra $\cP(I)/\cI$ is atomless and that $\kappa$ is an infinite cardinal. 
\begin{enumerate}
\item \label{1.++} Suppose that $\cP(I)/\cI$ is not $\kappa$-saturated. 
For any structure $M$ in a countable language, if the theory of the reduced power $M^I/\cI$ is unstable then $M^I/\cI$ is not $\kappa$-saturated. 
\item \label{2.++} Suppose that $\cP(A)/(\cI\rs A)$ is not $\kappa$-saturated for any $\cI$-positive~$A$. 
For any choice of structures $M_i$, for $i\in I$, such that the theory of $M=\prod_i M_i/\cI$ is unstable, $M$ is not $\kappa$-saturated. 
\end{enumerate}
\end{theorem}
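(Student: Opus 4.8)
The plan is to exploit Theorem~\ref{thm:palyutinstab} together with the classical fact that Hausdorff-style gaps obstruct saturation. Since the theory of $M=\prod_i M_i/\cI$ is unstable and $M$ has the simple cover property (by Lemma~\ref{L.scp}, as $\cP(I)/\cI$ is atomless), part~\eqref{itm:4} of Theorem~\ref{thm:palyutinstab} gives an $h$-definable transitive relation $\leqphi$ on $M^k$, reflexive on its domain, and a $\leqphi$-chain $\{\bar a,\bar b\}$ of length $2$. Passing to the level of fibers as in the proof of Lemma~\ref{L.dichotomy}, I would fix lifts $\tilde a,\tilde b$ and an $\cI$-positive set $A$ on which the fiberwise relation witnesses $\tilde a_i\leqphi\tilde b_i$ but not $\tilde b_i\leqphi\tilde a_i$, with reflexivity on $\{\tilde a_i,\tilde b_i\}$. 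The idea is then to transport a non-realized type coming from the failure of $\kappa$-saturation of $\cP(A)/(\cI\rs A)$ (or of $\cP(I)/\cI$ in case~\eqref{1.++}) into a non-realized $\leqphi$-type in $M^k$.

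The key construction is a map from subsets of $A$ into $M^k$: for $S\subseteq A$ set $e(S)$ to be represented by the sequence equal to $\tilde b_i$ for $i\in S$ and $\tilde a_i$ for $i\in A\setminus S$ (and, say, $\tilde a_i$ off $A$). Because $\leqphi$ is an $h$-formula, Theorem~\ref{T.Palyutin} shows that $e(S)\leqphi e(T)$ holds in $M$ essentially when $S\subseteq^{\cI} T$, and fails when $S\setminus T$ is $\cI$-positive; so $e$ is a morphism from $(\cP(A)/(\cI\rs A),\subseteq)$ into $(M^k,\leqphi)$ that reflects the order in the appropriate one-sided sense. Now take a consistent $1$-type over a set of size $<\kappa$ that is not realized in $\cP(A)/(\cI\rs A)$; concretely one may take the type describing a "cut" filled by a gap — an increasing family $(S_\alpha)$ and a decreasing family $(T_\beta)$ with $S_\alpha\subseteq^{\cI} T_\beta$ always and nothing $\cI$-between them. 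The $\leqphi$-type "$e(S_\alpha)\leqphi \bar x$ for all $\alpha$ and $\bar x\leqphi e(T_\beta)$ for all $\beta$, and $\bar x$ reflexive" is finitely satisfiable in $M$ (finitely many conditions only involve finitely many $S_\alpha,T_\beta$, and $e$ of the largest $S_\alpha$ witnesses them), hence consistent; but a realization $\bar c$ would, after lifting to a sequence of fibers and reading off $\{i\in A\mid \tilde c_i$ behaves like $\tilde b_i$ rather than $\tilde a_i$ under $\leqphi\}$, produce a set $\cI$-between all $S_\alpha$ and all $T_\beta$, contradicting that the cut is a gap. This yields a consistent type over a parameter set of size $<\kappa$ that is unrealized in $M$.

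For part~\eqref{1.++} the argument is the same with $A=I$ and $\cI\rs A=\cI$, using directly that $\cP(I)/\cI$ is not $\kappa$-saturated; for part~\eqref{2.++} one must run it over whichever $\cI$-positive $A$ the hypothesis hands us, which is exactly why the statement localizes to $\cP(A)/(\cI\rs A)$. Some care is needed because "not $\kappa$-saturated" only gives \emph{some} unrealized type over a small set, not necessarily a gap/cut type; so I would not literally assume the unrealized type is a gap. Instead the honest route is: the failure of $\kappa$-saturation of $\cP(A)/(\cI\rs A)$ gives a consistent type $p$ over $X\subseteq \cP(A)/(\cI\rs A)$, $|X|<\kappa$, omitted there; push $X$ forward through $e$ to parameters in $M$, translate each Boolean formula in $p$ into the corresponding $\leqphi$-(in)equality, check consistency of the image type $q$ in $M$ via Theorem~\ref{T.Palyutin} and finite satisfiability, and observe that a realization of $q$ in $M^k$ would descend (via the fiberwise "which of $\tilde a_i,\tilde b_i$" dichotomy on $A$) to a realization of $p$ in $\cP(A)/(\cI\rs A)$.

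\textbf{Main obstacle.} The delicate point is the last step: recovering a genuine element of $\cP(A)/(\cI\rs A)$ from a realization $\bar c\in M^k$ of the transported type. A representative $\tilde c$ need not be fiberwise equal to $\tilde a_i$ or $\tilde b_i$, so one must define $U=\{i\in A\mid M_i\models \varphi(\tilde b_i,\tilde c_i)\}$ (or a similar $h$-formula slice) and argue, using that $\varphi$ is an $h$-formula and that $\leqphi$ is transitive and reflexive on $\{\tilde a_i,\tilde b_i,\tilde c_i\}$ on an $\cI$-large set, that $\pi(U)$ satisfies exactly the Boolean conditions in $p$. Verifying that this dictionary between $\leqphi$-comparisons in $M$ and $\subseteq^{\cI}$-comparisons in $\cP(A)/(\cI\rs A)$ is faithful in both directions — in particular that strictness is preserved, so that a would-be realization really lands strictly inside the cut — is where the combinatorial bookkeeping lives, and it is essentially the reduced-product incarnation of the Hausdorff-gap obstruction advertised before Definition~\ref{Def.Gaps}.
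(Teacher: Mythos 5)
Your overall strategy is the paper's: use Theorem~\ref{thm:palyutinstab}\eqref{itm:4} to produce an $h$-definable transitive relation $\leqphi$, reflexive on its domain, with a $2$-chain $\{a,b\}$, and transport the failure of saturation of $\cP(A)/(\cI\rs A)$ into $(M,\leqphi)$ via the map $S\mapsto\pi(\chi_S)$ with $\chi_S$ equal to $\tilde b_i$ on $S$ and $\tilde a_i$ elsewhere. However, there are two genuine gaps, both located exactly at the points you flag as delicate but do not resolve.

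First, your choice of $A$ is too weak. You take $A$ to be an $\cI$-positive set on whose fibers $\tilde a_i\leqphi\tilde b_i$ holds and $\tilde b_i\leqphi\tilde a_i$ fails. The reading-off step requires more: given a realization $\tilde c$ of the transported type, one must rule out that $\tilde b_i\leqphi\tilde c_i$ and $\tilde c_i\leqphi\tilde a_i$ hold simultaneously on an $\cI$-positive subset of $A$. Transitivity of $\leqphi$ holds in $M$ but is not expressed by an $h$-formula and does not descend to the individual fibers $M_i$; moreover, Theorem~\ref{T.Palyutin} applied to the negation of the $h$-formula $\exists c\,(\varphi(\bar b,c)\wedge\varphi(c,\bar a))$ only yields that the set of fibers where no interpolant exists is $\cI$-\emph{positive}, not $\cI$-co-small, so your appeal to ``transitivity on an $\cI$-large set'' cannot be justified. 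The correct move is to define $A=\{i\mid M_i\models\nexists c\,(\tilde b_i\leqphi c\wedge c\leqphi\tilde a_i)\}$ at the outset: this set is $\cI$-positive by Theorem~\ref{T.Palyutin} (since the corresponding $h$-formula fails in $M$ by transitivity there), and restricting the whole construction to this $A$ guarantees that \emph{no} element of any fiber can lie between $\tilde b_i$ and $\tilde a_i$, which is what the splitting argument actually uses.

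Second, your fallback for handling an arbitrary unrealized type $p$ over $\cP(A)/(\cI\rs A)$ does not go through: the map $e$ is only an order embedding of $\subseteq^\cI$ into $\leqphi$, so there is no dictionary translating general Boolean formulas in $p$ (involving meets, joins, and complements of the variable) into $\leqphi$-conditions on the images. The missing ingredient is the theorem of \cite[Theorem 2.7]{Mija.Boolean}: an atomless Boolean algebra fails to be $\kappa$-saturated if and only if it contains a $(\lambda,\mu)$-gap or a $\lambda$-limit for some $\lambda,\mu<\kappa$. This reduces the problem precisely to the gap/cut configuration you treat, but it also forces you to handle the limit case, where the obstruction is not that nothing lies between the two sides but that anything between them must be $\leqphi$-equivalent to the limit point; your sketch omits this case entirely.
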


Part \eqref{1.++} of Theorem~\ref{T.non-saturated++} is a strong converse to  \cite[Theorem~0.2]{shelah1972filters}, which asserts that for every uncountable cardinal $\kappa$ and every ideal $\cI$ on a set $I$ the following holds.  Every reduced product $\prod_i M_i/\cI$ is $\kappa$-saturated if and only if (i) $\cP(I)/\cI$ is $\kappa$-saturated, (ii) $I$ can be covered by countably many sets in $\cI$, and (iii) the dual filter $\cI^*$ is $\lambda$-good for every $\lambda<\kappa$ (for the definition of $\lambda$-good see \cite{shelah1972filters} or \cite{ChaKe}; we will not need it here). 

The following is a generalization of Theorem~\ref{T.non-saturated} to all analytic ideals. 

\begin{corollary}\label{T.non-saturated+}
Let $\cI$ be an analytic ideal on $\bbN$ which includes $\Fin$. If a reduced product $M=\prod_n M_n/\cI$ has unstable theory, then $M$ is not $\aleph_2$-saturated. In particular, if $\CH$ fails then $M$ is not saturated.
\end{corollary}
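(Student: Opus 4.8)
The plan is to derive Corollary~\ref{T.non-saturated+} from Theorem~\ref{T.non-saturated++}\eqref{2.++}, since it is stated precisely as a corollary. So the task reduces to checking the hypotheses of that part of the theorem for an arbitrary analytic ideal $\cI\supseteq\Fin$ on $\bbN$, with $\kappa=\aleph_2$.

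First I would verify that $\cP(\bbN)/\cI$ is atomless: since $\cI$ is proper (it extends $\Fin$ but, being analytic, cannot be all of $\cP(\bbN)$) and contains all finite sets, any $\cI$-positive set can be split into two $\cI$-positive halves — indeed an infinite set $A\notin\cI$ splits as $A_0\sqcup A_1$ with both infinite, and if both were in $\cI$ then $A\in\cI$, contradiction; so at least one is $\cI$-positive and, recursing, one gets a strictly $\subseteq^\cI$-decreasing family, hence no atoms. Next, and this is the crux, I would check hypothesis (ii) of Theorem~\ref{T.non-saturated++}\eqref{2.++}: for every $\cI$-positive set $A\subseteq\bbN$, the Boolean algebra $\cP(A)/(\cI\rs A)$ is not $\aleph_2$-saturated. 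The key point here is that $\cI\rs A$ is again an analytic ideal on $A$ which includes $\Fin\rs A$ (the restriction of an analytic set to a Borel subset, composed with a recursive bijection $A\cong\bbN$, stays analytic). Then by the Todorcevic result quoted in the excerpt (\cite[Corollary~14]{To:Gaps}), there is an $(\aleph_1,\aleph_1)$-gap in $\cP(A)/(\cI\rs A)$. An $(\aleph_1,\aleph_1)$-gap is exactly an obstruction to $\aleph_2$-saturation: the set of formulas in variable $x$ (using names from the $\aleph_1+\aleph_1<\aleph_2$ parameters $a_\alpha,b_\beta$) asserting $a_\alpha\subseteq^\cI x$ and $x\subseteq^\cI b_\beta$ for all $\alpha,\beta$ is a consistent type over a set of size $\aleph_1$ whose realization would split the gap — so $\cP(A)/(\cI\rs A)$ omits a type over a set of size $<\aleph_2$, i.e. it is not $\aleph_2$-saturated. (One should be slightly careful that "gap" as defined in Definition~\ref{Def.Gaps} with one increasing and one decreasing side matches the statement being cited, but that is exactly the special case $M=\cP(A)$, $\leqphi\,=\,\subseteq^{\cI\rs A}$ singled out after Definition~\ref{Def.Gaps}, and the remark there notes it is equivalent to the more symmetric notion used in \cite{To:Gaps}.)

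With both hypotheses of Theorem~\ref{T.non-saturated++}\eqref{2.++} in hand for $\kappa=\aleph_2$, the conclusion is immediate: if the theory of $M=\prod_n M_n/\cI$ is unstable, then $M$ is not $\aleph_2$-saturated. Finally, for the "in particular" clause: if $\CH$ fails then $\fc\geq\aleph_2$, and an unstable theory always has arbitrarily large models, but more to the point the relevant reduced product already has size $\geq\fc\geq\aleph_2$ (it contains, e.g., $2^{\bbN}/\cI$ which has size $\fc$ when $\cP(\bbN)/\cI$ is atomless, hence size $\geq\aleph_2$), so "saturated" for $M$ would mean $|M|$-saturated with $|M|\geq\aleph_2$, which entails $\aleph_2$-saturation — contradicting what we just proved. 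Hence $M$ is not saturated.

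**The main obstacle** is the bookkeeping around restricting an analytic ideal to an $\cI$-positive set and invoking the existence of a Hausdorff-type gap there — i.e. making sure \cite[Corollary~14]{To:Gaps} applies verbatim to $\cI\rs A$ and that its gap notion lines up with Definition~\ref{Def.Gaps}; once that is granted, translating an $(\aleph_1,\aleph_1)$-gap into the failure of $\aleph_2$-saturation of the Boolean algebra is routine, and the passage from there to non-saturation of the reduced product is handed to us by Theorem~\ref{T.non-saturated++}.
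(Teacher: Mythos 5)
Your overall route is the same as the paper's: feed $\kappa=\aleph_2$ into Theorem~\ref{T.non-saturated++}\eqref{2.++}, verify atomlessness of $\cP(\bbN)/\cI$, and get the failure of $\aleph_2$-saturation of each $\cP(A)/(\cI\rs A)$ from the Todor\v cevi\'c $(\aleph_1,\aleph_1)$-gap theorem applied to the restricted ideal. The gap-to-omitted-type translation and the restriction bookkeeping are handled correctly.

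However, your verification of atomlessness is genuinely wrong. To show that an $\cI$-positive set $A$ is not an atom you must produce a partition $A=A_0\sqcup A_1$ with \emph{both} pieces $\cI$-positive; your argument only shows that \emph{at least one} piece is positive, which is trivially true and perfectly compatible with $A$ being an atom (if $A$ is an atom, every splitting has exactly one positive half, and your ``strictly decreasing'' chain is constant modulo $\cI$). No purely combinatorial argument of this kind can work: the statement fails for arbitrary ideals extending $\Fin$ --- if $\cI$ is the dual of a nonprincipal ultrafilter, then $\cP(\bbN)/\cI$ is the two-element algebra. Analyticity must enter here, and this is exactly how the paper argues: if $A$ were an atom, then $\{B\subseteq A\mid A\setminus B\in\cI\}$ would be a nonprincipal ultrafilter on $A$ that is analytic (a continuous preimage of $\cI$), contradicting the Baire Category Theorem. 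Your parenthetical that analyticity forces $\cI$ to be proper is a red herring (properness is part of being an ideal and in any case does not give atomlessness). A secondary, smaller imprecision: in the ``in particular'' clause, $M=\prod_n M_n/\cI$ does not literally contain $2^{\bbN}/\cI$; the bound $|M|\geq\fc$ comes from the order embedding of $\cP(A)/(\cI\rs A)$ into $(M,\leqphi)$ provided by Lemma~\ref{L.gappreserving} (together with the fact that this quotient has cardinality $\fc$ for analytic $\cI$), though the paper itself leaves this point implicit.
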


\begin{proof} 
Since a nonprincipal ultrafilter on $\bbN$ cannot be analytic (e.g., by the Baire Category Theorem), $\cP(\bbN)/\cI$ is atomless. Fix an $\cI$-positive set $A$. Since $\cI$ is analytic, $\cI\rs A$ is analytic in $\cP(A)$, and therefore we can fix (by \cite[Corollary~14]{To:Gaps}) an $(\aleph_1, \aleph_1)$-gap $(A_\alpha,B_\alpha)_{\alpha< \aleph_1}$ in $\cP(A)/\cI \rs A$. Hence $\cP(A)/\cI \rs A$ is not $\aleph_2$-saturated. Since $A$ is arbitrary, the result follows from Theorem~\ref{T.non-saturated++}. 
\end{proof}

In case of a class of ideals that includes the asymptotic density zero ideal, $\cZ_0=\{A\subseteq\bbN\mid \lim\sup_n\frac{|A\cap [0,n]|}{n}=0\}$, we have a sharpening of Corollary~\ref{T.non-saturated+}.
The proof of Corollary~\ref{C.P-ideal} below uses Theorem~\ref{T.layeredcntblysat} which is still ahead of us; however there is no problem since this theorem was already known and its proof does not use Corollary~\ref{C.P-ideal}. 

\begin{corollary}\label{C.P-ideal}
If $\cI$ is an analytic P-ideal, and the theory of $M^I/\cI$ is unstable, then the following are equivalent. 
\begin{enumerate}
\item $\cI$ is an $F_\sigma$ ideal.
\item $M$ is $\aleph_1$-saturated. 
\end{enumerate}
In particular, if $\prod_n M_n/\cZ_0$ has unstable theory, then it is not $\aleph_1$-saturated. 
\end{corollary}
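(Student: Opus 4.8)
The plan is to prove the two implications separately; only the forward direction will use the unstability hypothesis. For the implication from the first condition to the second, recall that an analytic P-ideal on $\bbN$ contains $\Fin$, so if in addition $\cI$ is $F_\sigma$ then it is a layered ideal, and Theorem~\ref{T.layeredcntblysat} applies and shows that $M^I/\cI$ is $\aleph_1$-saturated; no stability assumption enters here.

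For the converse I argue contrapositively, so assume $\cI$ is not $F_\sigma$ and exhibit a countable unrealized type. By Solecki's representation theorem, write $\cI=\mathrm{Exh}(\phi)$ for a lower semicontinuous submeasure $\phi$, and put $\norm{A}_\phi=\lim_n\phi(A\sm n)$, so that $\cI=\{A\subseteq\bbN:\norm{A}_\phi=0\}$; recall $\phi$ is monotone and countably subadditive. The first point is that $\inf\{\norm{A}_\phi:A\in\cI^+\}=0$: were this infimum a positive $\delta$, then $A\in\cI$ would be equivalent to the existence of $n$ with $\phi(A\sm n)<\delta$, and since $\{A:\phi(A\sm n)<\delta\}$ is $F_\sigma$ by lower semicontinuity this would make $\cI$ an $F_\sigma$ set. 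The second point upgrades this to a decreasing sequence: choose $C_m\in\cI^+$ with $\norm{C_m}_\phi<2^{-m}$, and after replacing $C_m$ by $C_m\sm n_m$ for suitable $n_m$ — which affects neither membership in $\cI^+$ nor the norm $\norm{\cdot}_\phi$ — arrange $\phi(C_m)<2^{-m}$; then $D_m:=\bigcup_{m'\ge m}C_{m'}$ is $\subseteq$-decreasing, each $D_m\supseteq C_m$ lies in $\cI^+$, and $\norm{D_m}_\phi\le\phi(D_m)\le\sum_{m'\ge m}2^{-m'}$, which tends to $0$ by countable subadditivity. Since $\norm{\cdot}_\phi$ is subadditive, any $E$ with $E\sm D_m\in\cI$ for all $m$ satisfies $\norm{E}_\phi\le\norm{D_m}_\phi$ for all $m$, hence $E\in\cI$; thus in the Boolean algebra $\cP(\bbN)/\cI$ the countable type asserting $x\ne 0$ and $x\le[D_m]$ for all $m$ is finitely satisfiable but unrealized, so $\cP(\bbN)/\cI$ is not $\aleph_1$-saturated. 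As $\cI$ is a proper analytic ideal, $\cP(\bbN)/\cI$ is atomless (there is no analytic ultrafilter), and as the theory of $M^I/\cI$ is unstable, part~\eqref{1.++} of Theorem~\ref{T.non-saturated++} with $\kappa=\aleph_1$ gives that $M^I/\cI$ is not $\aleph_1$-saturated.

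For the final sentence I apply part~\eqref{2.++} of Theorem~\ref{T.non-saturated++} with $\kappa=\aleph_1$, so it suffices to check that $\cP(A)/(\cZ_0\rs A)$ is not $\aleph_1$-saturated for every $\cZ_0$-positive $A$. Now $\cZ_0\rs A=\mathrm{Exh}(\phi\rs\cP(A))$ for the density submeasure $\phi$, and its associated norm agrees with the upper density $\overline{d}$ on $\cP(A)$; since $\overline{d}(A)>0$, the set $A$ contains $\cZ_0$-positive subsets of arbitrarily small upper density (for instance retaining every $j$-th element of $A$ yields a subset of upper density $\overline{d}(A)/j$). Hence $\inf\{\norm{X}_{\phi\rs\cP(A)}:X\subseteq A,\ X\notin\cZ_0\}=0$, and the argument of the previous paragraph, carried out inside $A$, shows $\cP(A)/(\cZ_0\rs A)$ is not $\aleph_1$-saturated; as $A$ was an arbitrary $\cZ_0$-positive set, the conclusion follows.

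The step I expect to be the main obstacle is the passage from $\inf\{\norm{A}_\phi:A\in\cI^+\}=0$ to a genuine $\subseteq$-decreasing sequence of $\cI$-positive sets all of whose $\subseteq^{\cI}$-lower bounds are trivial: naively intersecting many small-norm positive sets can destroy positivity, and the resolution is to first shrink each witness by a finite initial segment so that its entire $\phi$-value (not merely its norm) is small, and only then take tails of the union, invoking countable subadditivity of $\phi$. A secondary subtlety is that this construction must still work for the restricted ideals $\cZ_0\rs A$, which is why the last clause of the statement is formulated for $\cZ_0$ rather than for a general non-$F_\sigma$ analytic P-ideal; indeed, for such a general $\cI$ a positive restriction $\cI\rs A$ can itself be $F_\sigma$ (e.g.\ for the direct sum $\cI=\cZ_0\oplus\cI_{1/n}$ of the density-zero ideal with the summable ideal, the restriction to the summand carrying $\cI_{1/n}$ is $F_\sigma$), so the second equivalent condition genuinely concerns the reduced power $M^I/\cI$ and not arbitrary reduced products over $\cI$.
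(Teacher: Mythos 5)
Your proof is correct, and your forward implication ($F_\sigma$ implies layered implies $\aleph_1$-saturated, via Theorem~\ref{T.layeredcntblysat}) is exactly the paper's. For the converse you take a genuinely different route. The paper invokes Solecki's reduction $\Fin^\omega\leq_f\cI$ (\cite[Theorem~3.3]{Sol:Analytic}), valid for every analytic P-ideal that is not $F_\sigma$, and uses the finite-to-one map $f$ to transplant the canonical $\omega$-limit of $\OFin$ into $\cP(\bbN)/\cI$; you instead use Solecki's representation $\cI=\mathrm{Exh}(\phi)$ together with the observation that failure of $F_\sigma$-ness forces $\inf\{\norm{A}_\phi\mid A\in\cI^+\}=0$, and then build by hand a decreasing sequence of positive sets with no positive $\subseteq^{\cI}$-lower bound. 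Both arguments rest on Solecki's structure theory for analytic P-ideals and both feed the resulting unrealized countable type over $\cP(\bbN)/\cI$ into Theorem~\ref{T.non-saturated++}, so the difference is one of packaging: the paper's version is shorter given the cited reduction, while yours is more self-contained and makes the quantitative mechanism explicit (shrinking each witness by a finite initial segment so that its full $\phi$-mass, not just its norm, is small, and only then applying countable subadditivity to the tails of the union --- precisely the obstacle you flag). Your treatment of the final clause about $\prod_n M_n/\cZ_0$ is in fact more careful than the paper's: the paper merely notes that $\cZ_0$ is an analytic P-ideal that is not $F_\sigma$, leaving implicit the passage from reduced powers (part~\eqref{1.++} of Theorem~\ref{T.non-saturated++}) to reduced products (part~\eqref{2.++}), whereas you verify the hereditary hypothesis that $\cP(A)/(\cZ_0\rs A)$ is not $\aleph_1$-saturated for every $\cZ_0$-positive $A$; your remark that this hereditary property can fail for a general non-$F_\sigma$ analytic P-ideal (such as the direct sum of $\cZ_0$ with a summable ideal) correctly explains why the equivalence in the corollary is stated for reduced powers only, a point the paper only addresses in the sentence following the proof.
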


\begin{proof}
If $\cI$ is $F_\sigma$, then it is layered (see Definition~\ref{D.layered}, or \cite{Fa:CH}) and therefore $M$ is $\aleph_1$-saturated by Theorem~\ref{T.layeredcntblysat}. (In this implication we did not use the assumption that $\cI$ is a P-ideal.)

Assume $\cI$ is a P-ideal, but not $F_\sigma$. By Theorem~\ref{T.non-saturated++} it suffices to prove that $\cP(\bbN)/\cI$ is not $\aleph_1$-saturated. This was stated in \cite[p. 14]{Fa:AQ}, but we include a slightly more direct proof. By \cite[Theorem~3.3]{Sol:Analytic}, $\Fin^\omega\leq_f \cI$ (in terms of \cite{Fa:AQ}, $\OFin\leqRB \cI$), meaning that there exists a finite-to-one function $f\colon \bbN\to \bbN^2$ such that $f^{-1}[X]\in \cI$ if and only if $\forall m \exists n\forall k>n (m,k)\notin X$. The sets $A_m=f^{-1}(m\times \bbN)$, for $m\in \bbN$, and $B=\bbN$ form an $\omega$-gap in $\cP(\bbN)/\cI$.

The last sentence follows because $\cZ_0$ is an analytic P-ideal that is not $F_\sigma$ (\cite{Sol:Analytic}). 
\end{proof} 

A consequence of Theorem~\ref{T.non-saturated++} \eqref{2.++} analogous to Corollary~\ref{C.P-ideal} concerned with saturation of a reduced product instead of a reduced power can be proven by the analogous proof. 

\subsection{The proof of Theorem~\ref{T.non-saturated++}}

Both parts of Theorem~\ref{T.non-saturated++} are proven by injecting a gap or a limit into $(M,\leq_\varphi)$ for some $h$-formula $\varphi$. The formula is provided by the assumed unstability of the theory (Theorem~\ref{thm:palyutinstab}), while the gap (or limit) is provided by \cite[Theorem 2.7]{Mija.Boolean} which asserts that for an infinite cardinal $\mu$, an atomless Boolean algebra is $\mu$-saturated if and only if it has no $(\kappa,\lambda)$-gaps or $\kappa$ limits for all cardinals $\kappa,\lambda<\mu$ (this is condition $H_\kappa$ defined in \cite[p. 178]{Mija.Boolean}). 

Recall (Definition~\ref{Def.Gaps}) that even if a binary relation $\leqphi$ on some $M$ is not a partial ordering, one can have gaps and limits in $(M,\leqphi)$. 
 
\begin{lemma}\label{L.gappreserving} 
Suppose that $\cI$ is an ideal on a set $I$ and that $M=\prod_i M_i/\cI$ is a reduced product whose theory $T$ is unstable. Then there are a formula $\varphi(\bar x,\bar y)$ of the language of $T$, an $\cI$-positive $A\subseteq I$, and an order embedding
\[
f\colon (\mathcal P(A)/(\cI \rs A), \subseteq^\cI)\to (M,\leqphi)
\]
sending gaps to gaps and limits to limits. 
\end{lemma}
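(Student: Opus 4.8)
The plan is to combine Theorem~\ref{thm:palyutinstab} with the construction used in Lemma~\ref{L.dichotomy} and the proof of \eqref{itm:1}$\Rightarrow$\eqref{itm:5} of Theorem~\ref{thm:palyutinstab}. Since $T = \Th(M)$ is unstable and, by Lemma~\ref{L.scp}, $M$ has the simple cover property, Theorem~\ref{thm:palyutinstab}\eqref{itm:4} gives an $h$-definable transitive relation $\leqphi$ on $M^k$ (for some $k$), reflexive on its domain, together with $\bar a,\bar b\in M^k$ with $\bar a\leqphi\bar b$ but not $\bar b\leqphi\bar a$. Lift $\bar a,\bar b$ to representing sequences $\tilde a=(\tilde a_i),\tilde b=(\tilde b_i)\in\prod_i(M_i)^k$. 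Because $\varphi$ is an $h$-formula, Theorem~\ref{T.Palyutin} tells us that the set $X=\{i\mid M_i\models\varphi(\tilde a_i,\tilde a_i)\wedge\varphi(\tilde b_i,\tilde b_i)\wedge\varphi(\tilde a_i,\tilde b_i)\}$ has complement in $\cI$, and that $A:=\{i\in X\mid M_i\models\neg\varphi(\tilde b_i,\tilde a_i)\}$ is $\cI$-positive. This $A$ and this $\varphi$ are the ones in the statement.

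Next I would define the map $f$. For $S\subseteq A$, set $f(S)=\pi(g(S))$ where $g(S)_i=\tilde b_i$ if $i\in S$, and $g(S)_i=\tilde a_i$ otherwise (and $g(S)_i$ is arbitrary, say $\tilde a_i$, for $i\notin A$). The key computations, all immediate from Theorem~\ref{T.Palyutin} together with the transitivity and reflexivity (on its domain) of $\leqphi$, are: (i) if $S\subseteq^\cI S'$ (as subsets of $A$), then $\{i\mid M_i\models\varphi(g(S)_i,g(S')_i)\}\supseteq X\sm(S\sm S')$, which has $\cI$-small complement, so $f(S)\leqphi f(S')$; and (ii) if $S\setminus S'$ is $\cI$-positive, then on the $\cI$-positive set $(S\sm S')\cap X$ we have $g(S)_i=\tilde b_i$ and $g(S')_i=\tilde a_i$ with $M_i\models\neg\varphi(\tilde b_i,\tilde a_i)$, so $\{i\mid M_i\models\neg\varphi(g(S)_i,g(S')_i)\}$ is $\cI$-positive, whence not $f(S)\leqphi f(S')$. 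Together (i) and (ii) show $f$ is an order embedding of $(\cP(A)/(\cI\rs A),\subseteq^\cI)$ into $(M,\leqphi)$: indeed $S\subseteq^\cI S'$ iff $f(S)\leqphi f(S')$.

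It remains to check that $f$ sends gaps to gaps and limits to limits; this is where the real care is needed, since $\leqphi$ is not a genuine order and one cannot simply invoke order-theoretic generalities. Suppose $(S_\alpha,S'_\beta)_{\alpha<\kappa,\beta<\lambda}$ is a $(\kappa,\lambda)$-gap in $\cP(A)/(\cI\rs A)$; by the embedding property $(f(S_\alpha),f(S'_\beta))$ satisfies the interleaving inequalities of Definition~\ref{Def.Gaps}. I must show no $c\in M$ interpolates in $(M,\leqphi)$. Given such a $c$, lift it to $(\tilde c_i)$. The point is to produce from $\tilde c$ a \emph{subset} $T$ of $A$ whose image would interpolate the original gap, contradicting that it is a gap: roughly, set $T=\{i\in A\mid M_i\models\varphi(\tilde a_i,\tilde c_i)\wedge\varphi(\tilde c_i,\tilde c_i)\}$, or a suitable variant, and use the $h$-formula transfer and transitivity to verify $S_\alpha\subseteq^\cI T\subseteq^\cI S'_\beta$ for all $\alpha,\beta$ from the hypotheses $f(S_\alpha)\leqphi c\leqphi f(S'_\beta)$. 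The same argument handles $\kappa$-limits (both upward- and downward-directed). The main obstacle is precisely this last step: extracting, from an arbitrary interpolant $c$ in the quotient structure, a genuine interpolating set in $\cP(A)/(\cI\rs A)$, using only that $\leqphi$ is $h$-definable, transitive, reflexive on its domain, and that on the fibers in $A$ we have the configuration recorded above; this requires choosing the right fiberwise definition of $T$ and verifying the two $\cI$-smallness/positivity estimates via Theorem~\ref{T.Palyutin}.
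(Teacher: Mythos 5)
Your setup and the order-embedding part are essentially the paper's: the same appeal to Theorem~\ref{thm:palyutinstab}\eqref{itm:4}, the same map $B\mapsto\pi(\chi_B)$ with $\chi_B$ equal to $\tilde b_i$ on $B$ and $\tilde a_i$ off it, and the same two-sided computation via Theorem~\ref{T.Palyutin}. But the lemma's content is the gap/limit preservation, and there you stop at exactly the point you yourself flag as ``the main obstacle''; what you have written is not yet a proof of that step, and the way you have set things up it would not go through. The problem is your choice of $A$. You take $A=\{i\in X\mid M_i\models\neg\varphi(\tilde b_i,\tilde a_i)\}$. To convert an interpolant $d$ of $(f(S_\alpha),f(S'_\beta))$ into a set interpolating $(S_\alpha,S'_\beta)$, the right candidate is $C_{\tilde d}=\{i\mid \tilde b_i\leqphi \tilde d_i\}$ (not your tentative $\{i\mid \tilde a_i\leqphi\tilde c_i\wedge\dots\}$, which is satisfied on essentially all fibers and carries no information), and the inclusion $C_{\tilde d}\subseteq^{\cI}S'_\beta$ requires that for $i\in C_{\tilde d}\setminus S'_\beta$ one has $\tilde d_i\nleq_\varphi \tilde a_i$. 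From $\tilde b_i\leqphi\tilde d_i$ and $\neg(\tilde b_i\leqphi\tilde a_i)$ this would follow from transitivity of $\leqphi$ \emph{in the fiber} $M_i$ --- but Theorem~\ref{thm:palyutinstab}\eqref{itm:4} only gives transitivity in $M$ (and in models of $T$), and the $M_i$ need not be models of $T$, so fiberwise transitivity is not available.

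The missing idea, which is the one genuinely new move in the paper's proof, is to build the absence of a fiberwise interpolant into $A$ itself: since $\leqphi$ is transitive in $M$ and $\neg(b\leqphi a)$, the $h$-sentence $\exists c\,(\varphi(b,c)\wedge\varphi(c,a))$ fails in $M$, so by Theorem~\ref{T.Palyutin} the set
\[
A=\{i\in I\mid M_i\models \nexists c\,(\tilde b_i\leqphi c\wedge c\leqphi\tilde a_i)\}
\]
is $\cI$-positive. Working inside this $A$, for any lift $\tilde x$ one gets both implications of the paper's Claim~\ref{c.gappreserving}: $\pi(\chi_B)\leqphi\pi(\tilde x)$ forces $B\subseteq^{\cI}C_{\tilde x}$, and $\pi(\tilde x)\leqphi\pi(\chi_B)$ forces $C_{\tilde x}\subseteq^{\cI}B$, because for $i\in A$ with $\tilde b_i\leqphi\tilde x_i$ the membership $i\in A$ (rather than any transitivity in $M_i$) rules out $\tilde x_i\leqphi\tilde a_i$. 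Gap and limit preservation are then immediate from the claim. So: correct skeleton, correct order embedding, but the decisive step is left open and your present definition of $A$ does not support it; replace it with the paper's $A$ and the argument closes.
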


\begin{proof}
By Theorem~\ref{thm:palyutinstab}, there are an $h$-formula $\varphi(x,y)$ which defines a transitive relation $\leqphi$ which is reflexive on its domain, and elements $a$ and $b$ in $M$ such that $\{a,b\}$ is a $\leqphi$-chain. We may assume that $a$ and $b$ are of arity 1 (see \S\ref{S.arity}). 

Denote by $\pi\colon\prod_i M_i\to M$ the quotient map, and let $\tilde a,\tilde b\in\prod_i M_i$ be such that $a=\pi(\tilde a)$ and $b=\pi(\tilde b)$. Since $\leqphi$ is transitive, there is no $c\in M$ such that $b\leqphi c$ and $c\leqphi a$. Therefore, by Theorem~\ref{T.Palyutin}, the set 
\begin{equation*}\label{eq.A}
A=\{i\in I \mid M_i\models \nexists c\, (\tilde b_i\leqphi c\wedge c\leqphi \tilde a_i)\}
\end{equation*}
is $\cI$-positive. If $B\subseteq A$, let $\chi_B\in \prod_i M_i$ be given by
\[
(\chi_B)_i=\begin{cases} \tilde b_i &\text{if } i\in B,\\
\tilde a_i& \text{otherwise}
\end{cases}.
\]
Define $f\colon (\mathcal P(A)/(\cI \rs A), \subseteq^\cI)\to (M,\leqphi)$ by (writing $[B]_\cI$ for the $\cI$-equivalence class of $B\subseteq I$)
\[
f([B]_\cI)=\pi(\chi_B). 
\]
This is clearly an order embedding, and we will prove that it sends gaps to gaps and limits to limits. 

\begin{claim}\label{c.gappreserving} 
For all $B\subseteq A$ and $\tilde x\in\prod_i M_i$, writing $C_{\tilde x}=\{i\mid \tilde b_i\leqphi \tilde x_i\}$ we have that
\begin{itemize}
\item if $\pi(\chi_B)\leqphi\pi(\tilde x)$ then $B\subseteq^\mathcal I C_{\tilde x}$ and 
\item if $\pi(\tilde x)\leqphi\pi(\chi_B)$ then $C_{\tilde x}\subseteq^\mathcal I B$.
\end{itemize}
\end{claim}
\begin{proof}
Since $\leqphi$ is $h$-definable, Theorem~\ref{T.Palyutin} implies that $\pi(\chi_B)\leqphi\pi(\tilde x)$ if and only if $\{i\mid (\chi_B)_i\nleq_\varphi \tilde{x}_i\}\in \mathcal I$. Since $i\in B$ implies $(\chi_B)_i=\tilde b_i$, for all but $\mathcal I$ many $i\in B$ we get that $ \tilde b_i\leqphi \tilde x_i$, i.e., $i\in C_{\tilde x}$.
 
The second statement is proved by contrapositive: for every $i\in C_{\tilde x}\setminus B$ we have that $ \tilde b_i\leqphi \tilde{x}_i$. As $C_{\tilde x}\subseteq A$, we have $\tilde x_i\nleq_\varphi \tilde a_i$. Hence, if $C_{\tilde x}\setminus B$ is $\mathcal I$-positive, so is the set $\{i\mid \tilde x_i\nleq_\varphi(\chi_B)_i\}$, and therefore $\pi(\tilde x)\nleq_\varphi \pi(\chi_B)$. 
\end{proof}

Applying the claim to $\chi_B$ and $\chi_C$, in case $B$ and $C$ are subsets of $A$, gives that $B\subseteq^\cI C$ if and only if $\pi(\chi_B)\leqphi \pi(\chi_C)$, and therefore this proves that $f$ is an order embedding.

To show that $f$ is gap-preserving, consider cardinals $\kappa$ and $\lambda$ and a $(\kappa,\lambda)$-gap $\{A_\alpha,B_\beta\}$, for $\alpha<\kappa$ and $\beta<\lambda$ in $\mathcal P(A)/(\cI \rs A)$. Since $f$ is an order embedding, for all $\alpha\leq\beta<\kappa$ and $\gamma\leq\delta<\lambda$ we have 
\[
\pi(\chi_{A_\alpha})\leqphi\pi(\chi_{A_\beta})\leqphi\pi(\chi_{B_\gamma})\leqphi\pi(\chi_{B_\delta}).
\]
Suppose there is $d\in M$ such that $\pi(\chi_{A_\alpha})\leqphi d\leqphi\pi(\chi_{B_\beta})$ for all $\alpha<\kappa$ and $\beta<\lambda$. Then Claim~\ref{c.gappreserving} applied to a lift $\tilde d$ of $d$ implies that $C_{\tilde d}$ splits the gap in $\cP(A)/(\cI \rs A)$, a contradiction.

To show that $f$ is limit-preserving, suppose that $A_\alpha$, $B$, for $\alpha<\kappa$, is a limit in $\P(I)/\cI$ and $d$ is such that $\pi(\chi_{A_\alpha})\leqphi d\leqphi\pi(\chi_{B})$ for all $\alpha<\kappa$. Then Claim~\ref{c.gappreserving} implies that (for a lift $\tilde d$ of $d$) we have $A_\alpha\subseteq^{\cI} C_{\tilde d}\subseteq^{\cI} B$ for all $\alpha$. Since $B$ is the limit of $(A_\alpha)$, this implies $B\subseteq^{\cI} C_{\tilde d}$ and therefore $B\Delta C_{\tilde d}\in \cI$, $\Delta$ being the symmetric difference. This implies that $d\leqphi \pi(\chi_B) $ and $\pi(\chi_B)\leqphi d$. 

The proof that $f$ preserves downwards directed limits is analogous, and this completes the proof of the lemma. 
\end{proof}

We are ready to put together the pieces and conclude the proofs of the main results of this section. 

\begin{proof}[Proof of Theorem~\ref{T.non-saturated++}]
\eqref{2.++} Fix an ideal $\cI$ on a set $I$ and an infinite cardinal $\kappa$ such that $\cP(I)/\cI$ is atomless and $\cP(A)/(\cI \rs A)$ is not $\kappa$-saturated whenever $A$ is $\cI$-positive. Fix struc\-tu\-res~$M_i$, for $i\in I$, of the same signature such that $M=\prod_i M_i/\cI$ has unstable theory, $T$. Fix an $\cI$-positive $A\subseteq I$ and 	$\varphi(\bar x,\bar y)$ of the language of $T$ as provided by Lemma~\ref{L.gappreserving}, together with an order embedding
\[
f\colon (\mathcal P(A)/(\cI \rs A), \subseteq^\cI)\to (M,\leqphi)
\]
 that sends gaps to gaps and limits to limits. 	
%\[A=\{i\mid M_i\models \nexists c\, (\tilde b_i\leqphi c\wedge c\leqphi \tilde a_i)\}.\]

%Define $f\colon \cP(A)\to M$ as in Lemma~\ref{L.gappreserving}, by $f([A]_\cI)=\pi_\cI(\chi_A)$. 
%Lemma~\ref{L.gappreserving} implies that $f$ is gap-preserving and limit-preserving. 

By hypotheses, the algebra $\cP(A)/(\cI \rs A)$ is atomless and not $\kappa$-saturated, hence by \cite[Theorem 2.7]{Mija.Boolean} there are $\lambda,\mu<\kappa$ such that $\cP(A)/(\cI \rs A)$ has a $(\kappa,\lambda)$-gap or a $\kappa$ limit. In the former case, let $A_\alpha$, $B_\beta$, $\alpha<\kappa$, $\beta<\lambda$ be a gap in $\cP(A)/(\cI\rs A)$. Then the type whose conditions are $f([A_\alpha]_\cI)\leq_\varphi x$, $x\leq_\varphi f([B_\beta]_\cI)$ for $\alpha<\kappa$ and $\beta<\lambda$ is consistent but not realised in $M$. In the latter case, assume that $A_\alpha$, $B$ is a $\kappa$-limit. Then the type whose conditions are $f([A_\alpha]_\cI)\leq_\varphi x$ for all $\alpha$, $x\leq_\varphi f([B_\beta]_\cI)$, and $f([B_\beta]_\cI)\nleq_\varphi x$ is consistent but not realised in $M$. If the limit is of the form $A$, ${B_\alpha}$, $\alpha<\kappa$, the proof is analogous. This concludes the proof. 

The proof of \eqref{1.++} is contained in the provided proof of \eqref{2.++}. 
\end{proof}

\section{Ultrapowers}
Armed with our results, we improve \cite[Theorem~C(1)]{farah2022betweenII}. 

\begin{theorem} \label{T.nonisomorphic} 
There is a forcing extension in which for every sequence~$M_n$, for $n\in \bbN$, of structures in a countable language, if the reduced product $\prod_n M_n/\Fin$ has unstable theory, then it is not isomorphic to any ultrapower associated with a nonprincipal ultrafilter on~$\bbN$.% the ultraproduct $\prod_i N_i/\cU$ is not isomorphic to the reduced product $\prod_i M_i/\Fin$. 
\end{theorem}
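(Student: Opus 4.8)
The plan is to combine Theorem~\ref{T.non-saturated} (or rather its ideal-sensitive refinement) with a choice of forcing extension that controls the saturation level of all ultrapowers on $\bbN$ simultaneously. The key point is a mismatch in saturation: in a suitable model of $\ZFC$, every ultrapower $\prod_n N_n/\cU$ of a structure with unstable theory is $\fc$-saturated (indeed $\aleph_1$-saturated, but we want more), whereas by Theorem~\ref{T.non-saturated+} the reduced product $\prod_n M_n/\Fin$ with unstable theory fails to be $\aleph_2$-saturated. If $\fc \geq \aleph_2$ in that model, these two facts are incompatible with an isomorphism, since isomorphic structures have the same saturation level, and the two reduced products are elementarily equivalent (both have the same unstable theory $T$) so if one were an ultrapower of some $N$ then $N \models T$.

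So first I would invoke the saturation of ultrapowers. The classical fact (Keisler, see \cite[\S 6.1]{ChaKe}; or in the present generality the cited Shelah good-ultrafilter technology) is that over any nonprincipal ultrafilter on $\bbN$, an ultrapower $\prod_n N_n/\cU$ is always $\aleph_1$-saturated, and is $\kappa^+$-saturated whenever $\cU$ is $\kappa^+$-good. Under $\MA_{\aleph_1}$ (or just $\fp > \aleph_1$) one gets $\aleph_2$-saturation of \emph{every} nonprincipal ultrapower on $\bbN$; more generally, forcing axioms or a careful iteration make every such ultrapower $\fc$-saturated. I would therefore pass to a forcing extension satisfying $\MA_{\aleph_1} + \neg\CH$ (so $\fc \geq \aleph_2$), in which every ultrapower of a countable-language structure over a nonprincipal ultrafilter on $\bbN$ is $\aleph_2$-saturated. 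The existence of such an extension is standard (finite support ccc iteration of length $\omega_2$ over a model of $\CH$, the classical Solovay--Tennenbaum construction), and $\neg\CH$ is forced by adding enough Cohen reals along the way or simply by the length of the iteration; actually even the Cohen model over a model of $\MA_{\aleph_1}$ works, but the cleanest choice is the standard $\MA+\neg\CH$ model.

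The argument then closes as follows. Work in that extension. Let $M_n$ be structures in a countable language with $T = \Th(\prod_n M_n/\Fin)$ unstable. By Theorem~\ref{T.non-saturated} (equivalently Corollary~\ref{T.non-saturated+} applied to $\Fin$, which is $F_\sigma$ hence analytic), $M = \prod_n M_n/\Fin$ is not $\aleph_2$-saturated. Suppose toward a contradiction that $M \cong \prod_n N_n/\cU$ for some structures $N_n$ and nonprincipal ultrafilter $\cU$ on $\bbN$. Then the ultraproduct has unstable theory $T$ as well, and by the choice of the model it is $\aleph_2$-saturated, contradicting the fact that $M$ is not. Hence no such isomorphism exists.

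The main obstacle — and really the only nonroutine ingredient — is ensuring that the chosen forcing extension delivers $\aleph_2$-saturation of \emph{all} nonprincipal ultrapowers on $\bbN$ uniformly, i.e.\ that in the $\MA_{\aleph_1}+\neg\CH$ model every nonprincipal ultrafilter on $\bbN$ is $\aleph_2$-good (equivalently, its ultrapowers realize all types of size $\aleph_1$). This is where I would either cite the known fact that $\MA_{\aleph_1}$ implies every nonprincipal ultrafilter on $\bbN$ is $\aleph_2$-good (a result going back to Keisler/Kunen; see the treatment of good ultrafilters and Martin's Axiom in \cite[\S 6.1]{ChaKe} and \cite{shelah1972filters}), or, to be safe, improve $\neg\CH$ to $\fc = \aleph_2$ so that only $\aleph_1$-sized types are at issue and the gap in Corollary~\ref{T.non-saturated+} at $\aleph_2$ does all the work. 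The comparison with \cite[Theorem~C(1)]{farah2022betweenII} is exactly that we replace the \emph{robust order property} hypothesis by mere instability, which is legitimate precisely because Theorem~\ref{thm:palyutinstab} shows that an unstable reduced product already carries an $h$-definable order (hence the Hausdorff gap obstruction of Corollary~\ref{T.non-saturated+}), so the heavier robustness assumption is unnecessary.
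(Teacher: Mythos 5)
Your reduction to a saturation mismatch has a fatal gap at exactly the point you flag as the ``main obstacle'': you need a forcing extension in which \emph{every} ultrapower $\prod_n N_n/\cU$ with unstable theory, over \emph{every} nonprincipal ultrafilter $\cU$ on $\bbN$, is $\aleph_2$-saturated. This is not a known fact attributable to Keisler or Kunen (their theorems give the \emph{existence} of good ultrafilters, not that all ultrafilters are good), and it is in fact provably false in every model of $\lnot\CH$. By the dichotomy for the number of ultrapowers (Farah--Hart--Sherman \cite{FaHaSh:Model2} and Farah--Shelah), if $\CH$ fails and $N$ is a countable structure with unstable theory, then $N$ has $2^{\fc}$ pairwise non-isomorphic ultrapowers associated with nonprincipal ultrafilters on $\bbN$; since these are all elementarily equivalent of cardinality $\fc$, at most one isomorphism class among them can be $\fc$-saturated, so with $\fc=\aleph_2$ (the very normalization you propose) almost all of them fail $\aleph_2$-saturation. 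Your argument therefore only rules out isomorphism with the well-behaved ultrapowers and says nothing about the non-saturated ones --- and the latter are exactly the dangerous case, since under $\CH$ reduced products and ultrapowers genuinely can be isomorphic (\cite[Theorem~E]{farah2022betweenI}), so no saturation-level invariant can separate them once both sides fail saturation. The claim that $\MA_{\aleph_1}$ makes every nonprincipal ultrafilter on $\bbN$ $\aleph_2$-good is thus not merely unproven in your write-up; it contradicts results cited in this paper.

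The paper's proof avoids this entirely by using a finer, chain-theoretic invariant rather than saturation. It works in the extension of \cite[\S 5]{farah2022betweenII} (a Levy collapse of $\fc$ to $\aleph_1$ followed by a ccc iteration) in which Galvin's poset $E$ --- which has no infinite chains, yet admits no strictly increasing map into any linear order lacking $\aleph_2$- and $\aleph_2^*$-chains --- embeds into $\prod_n(n,<)/\Fin$, while no reduced product $\prod_n L_n/\Fin$ of countable partial orders has $\aleph_2$- or $\aleph_2^*$-chains. Instability supplies (via Theorem~\ref{thm:palyutinstab}) an $h$-definable order $\leq_\varphi$; if $N^\bbN/\cU\cong\prod_n M_n/\Fin$, then $\prod_\cU(n,<)$ embeds into $(N,\leq_\varphi)^\bbN/\cU$, the image of $E$ under the quotient map to $\prod_\cU(n,<)$ acquires an $\aleph_2$- or $\aleph_2^*$-chain, and transporting this through the isomorphism produces a long chain in $\prod_n(M_n,\leq_\varphi)/\Fin$ --- contradicting the property forced on the reduced-product side. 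If you want to salvage your outline, this asymmetric ``long chains appear in every ultrafilter quotient but in no $\Fin$-quotient'' mechanism is the ingredient you would need in place of uniform goodness.
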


\begin{proof}
We need to prove that there exists a forcing extension in which for every countable language~$\cL$ and $\cL$-structures $N$ and $M_n$, for $n\in \bbN$, and for every nonprincipal ultrafilter $\cU$ on $\bbN$ the ultrapower $N^\bbN/\cU$ is not isomorphic to the reduced product $\prod_n M_n/\Fin$, provided that the theory of the latter is unstable. There is a poset $E$, constructed by Galvin, such that $E$ has no infinite chains but for every linear ordering $\bbL$ that has neither $\aleph_2$-chains nor $\aleph_2^*$-chains, there is no strictly increasing map from $E$ to $\bbL$ (\cite[Theorem~3.2]{Fa:Embedding}). In the forcing extension described in \cite[\S 5]{farah2022betweenII} which consists of the Levy collapse of $\fc$ to $\aleph_1$ followed by a ccc forcing, $E$ embeds into $\prod_n (n,<)/\Fin$ and there are neither $\aleph_2$-chains nor $\aleph_2^*$-chains in $\prod_n L_n/\Fin$ for any sequence of countable partial orders $L_n$, for $n\in \bbN$. 

This embedding $\Phi\colon E\to \prod_n (n,<)/\Fin$ has an additional property that if $\cU$ is a nonprincipal ultrafilter on $\bbN$ and $\pi_\cU\colon \prod_n (n,<)/\Fin\to \prod_n (n,<)/\cU$ is the quotient map, then $\pi_\cU\circ \Phi[E]$ has an $\aleph_2$-chain or an $\aleph_2^*$-chain. This is \cite[Proposition 4.15]{farah2022betweenII}, with $(A_n,\triangleleft_\varphi)=(n,<)$. 

Towards obtaining a contradiction, assume that $N^\bbN/\cU$ is isomorphic to $\prod_n M_n/\Fin$. 
By \L o\' s's Theorem, $N$ has a $\leq_\varphi$-chain of length $n$ for every $n$. Therefore the ultrapower $\prod_\cU (n,<)$ is isomorphic to a substructure of $\prod_\cU (N,\leq_\varphi)/\cU$. By the previous paragraph, there is an embedding $\Phi\colon E\to \prod_n (n<)/\Fin$. The image of $E$ under the composition $\pi_\cU\circ \Phi$ (where $\pi_\cU\colon \prod_n (n,<)/\Fin\to \prod_n (n,<)/\cU$ is the quotient map) has an $\aleph_2$-chain or an $\aleph_2^*$-chain. 
Since $\prod_\cU (n,<)$ embeds in $(N,\leq_\varphi)^\bbN/\cU$, we get an $\aleph_2$-chain or an $\aleph_2^*$-chain in $(N,\leq_\varphi)^\bbN/\cU$.   Since $\varphi$ is definable by an $h$-formula and $N^\bbN/\cU$ is isomorphic to $\prod_n M_n/\Fin$, we have that 
\[
(N,\leq_\varphi)^\bbN/\cU\cong (N^\bbN/\cU,\leq_\varphi)\cong (\prod_n M_n/\Fin,\leq_\varphi)\cong (\prod_nM_n,\leq_\varphi)/\Fin.
\]
This gives an $\aleph_2$-chain, or an $\aleph_2^*$-chain in $(\prod_nM_n,\leq_\varphi)/\Fin$ for a sequence of countable partial orders $(M_n,\leq_\varphi)$, a contradiction.
However, as pointed out earlier, there are no long chains in $\prod_n M_n/\Fin$; contradiction. 
\end{proof}

It is not known whether the assertion that a nontrivial reduced product whose theory is unstable, $\prod_n M_n/\Fin$, is isomorphic to a nontrivial ultrapower is equivalent to CH; see \cite[Question~8.1]{farah2022betweenII}.

\section{Saturation and layered ideals} \label{S.SaturationLayered}

In the present section we introduce layered ideals and start the proof of Theorem~\ref{T.cs}, a strengthening of Theorem~\ref{T.saturated}. Our result was inspired by \cite[Theorem~5.4]{FaHaSh:Model2}, and its original (considerably more complicated) proof was based on the proof of this theorem. The following is adapted from \cite{Fa:CH}, where layered ideals on $\bbN$ have been defined; in this case, the assumption that the index-set is covered by countably many sets in the ideal reduces to the standard assumption that the ideal includes $\Fin$. 

\begin{definition}\label{D.layered}
An ideal $\cI$ on a set $I$ is \emph{layered} if $I$ can be covered by a union of countably many sets in $\cI$ and there is a function 
\[
\mu_\cI\colon \cP(I)\to [0,\infty]
\] 
such that $A\subseteq B$ implies $\mu_\cI(A)\leq \mu_\cI(B)$, $\cI=\{A\mid \mu_\cI(A)<\infty\}$, and $A\in \cI^+$ if and only if $\sup\{\mu_\cI(B)\mid B\subseteq A, B\in \cI\}=\infty$. 
\end{definition}

The function $\mu_{\Fin}(A)=|A|$ witnesses that $\Fin$ is layered. (Note however that the ideal $\Fin(\kappa)$ of finite subsets of an uncountable cardinal $\kappa$ is not layered. Also note that in this case $\cP(\kappa)/\Fin(\kappa)$ has an $\omega$-limit, and is therefore not $\aleph_1$-saturated.) All $F_\sigma$ ideals are also layered and so are other known analytic ideals for which the quotient $\cP(\bbN)/\cI$ is $\aleph_1$-saturated (\cite[Proposition 6.6]{Fa:CH}). Also, the quotient $\cP(\bbN)/\cI$ is $\aleph_1$-saturated for every layered ideal $\cI$ on $\bbN$ (\cite[Lemma~6.7]{Fa:CH}). Conjecturally, an analytic ideal $\cI$ is layered if and only if $\cP(\bbN)/\cI$ is $\aleph_1$-saturated.

\begin{theorem} \label{T.cs}
Suppose that $\cI$ is a layered ideal on a set $I$. Suppose that $M_i$, for $i\in I$, are structures in the same countable language, and that the theory of $M=\prod_i M_i/\cI$ is stable. Then $M$ is $\fc$-saturated. In particular, if $|M_i|\leq\mathfrak c$ for each $i$, then $M$ is saturated.
\end{theorem}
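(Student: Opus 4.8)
The plan is to show that every complete $1$-type $p$ over a parameter set $X \subseteq M$ with $|X| < \fc$ is realised in $M$; by the reduction to $1$-types in \S\ref{S.arity} and since every consistent type extends to a complete one (\S\ref{S.Types}), this suffices. Since $\cI$ is layered, $\cP(I)/\cI$ is atomless, so $M$ has the simple cover property by Lemma~\ref{L.scp}, and hence by Lemma~\ref{lemma:booleanh} every formula over $X$ is, modulo $\Th(M,X)$, equivalent to a Boolean combination of $h$-formulas over $X$; as $p$ is complete, $p$ is interdeducible over $\Th(M,X)$ with the set of its \emph{$h$-literals}, i.e.\ conditions $\theta(x,\bar a)$ or $\neg\theta(x,\bar a)$ with $\theta$ an $h$-formula and $\bar a\in X$. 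This reduces the problem to realising a set of $h$-literals.

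Next I would use stability through Theorem~\ref{thm:palyutinstab}: since $T=\Th(M)$ is stable with the simple cover property, item \eqref{itm:3} fails, so for every $h$-formula $\theta(x,\bar y)$ and all $\bar a,\bar b$, the sets $\theta(M,\bar a)$ and $\theta(M,\bar b)$ are either disjoint or equal. Writing $U_\theta=(\exists\bar y\,\theta)(M)$, the relation $\sim_\theta$ given by $\exists\bar y(\theta(x,\bar y)\wedge\theta(x',\bar y))$ is then a definable equivalence relation on $U_\theta$ whose classes are exactly the nonempty sets $\theta(M,\bar a)$; let $e_\theta(c)$ denote the class of $c\in U_\theta$. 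For each of the countably many $h$-formulas $\theta$ occurring positively in $p$, the positive $\theta$-instances of $p$ form a consistent, hence pairwise-satisfiable, subtype, so by the dichotomy they all name the same $\sim_\theta$-class; choosing one representative $\bar a^\theta\in X$ per such $\theta$ gives a countable $X_0\subseteq X$ with the property that the positive part of $p$ is implied over $\Th(M,X)$ by the countable consistent type $p_0^+=\{\theta(x,\bar a^\theta):\theta\text{ occurs positively}\}$ over $X_0$. A parallel analysis of the negative part shows: a condition $\neg\theta(x,\bar a)$ with $\theta$ occurring positively is already implied by $p_0^+$; while if $\theta$ occurs only negatively, the $\theta$-conditions of $p$ say exactly that $x\notin U_\theta$, or that $x\in U_\theta$ and $e_\theta(x)$ avoids the $({<}\,\fc)$-many $\sim_\theta$-classes named by tuples from $X$ (the ``$x\in U_\theta$ vs.\ $x\notin U_\theta$'' alternative being itself a single $h$-literal over $\emptyset$, absorbed into the countable part).

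Thus realising $p$ amounts to finding $c\in M$ that (i) realises a fixed countable type $p_0^+$ together with countably many further $h$-literals over $\emptyset$, and (ii) for each of countably many $h$-formulas $\theta$ occurring only negatively, has $e_\theta(c)$ outside a set of $({<}\,\fc)$-many $\sim_\theta$-classes. Part (i) is exactly $\aleph_1$-saturation of $M$, which holds because $\cI$ is layered (Theorem~\ref{T.layeredcntblysat}); it produces a nonempty ``solution set'' $B^\ast\subseteq M$ for the countable part, and one checks using $\aleph_1$-saturation that $B^\ast$ is not contained in any single forbidden class. The remaining point (ii) — escaping $({<}\,\fc)$-many forbidden classes simultaneously while staying inside $B^\ast$ — is where the layered rank function $\mu_\cI$ is used: one builds a representing sequence $\tilde c=(\tilde c_i)_{i\in I}$ for $c$ by a construction organised along the ideal sets witnessing layeredness, with bookkeeping of $\mu_\cI$-ranks, realising more and more of $p_0^+$ on larger coordinate regions while, on $\cI$-positively large regions, ``moving $\tilde c_i$ off'' the fibres of each forbidden $\theta$-instance; monotonicity of $\mu_\cI$ keeps the coordinate set on which a forbidden instance holds of finite $\mu_\cI$-rank, hence in $\cI$, as required by Theorem~\ref{T.Palyutin}. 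The crucial input from stability here is that, for each $\theta$, the forbidden classes carry no order structure — the relation on $\sim_\theta$-classes pulled back from $\subseteq^{\cI}$ has height $\le 1$ by Lemma~\ref{L.dichotomy} — so there is no Hausdorff-gap-type obstruction, the very obstruction that, by Theorem~\ref{T.non-saturated++}, blocks $\aleph_2$-saturation when $T$ is unstable.

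The main obstacle I expect is precisely step (ii): pushing realisation past $\aleph_1$-many to $({<}\,\fc)$-many negative conditions. The naive attempt to realise a shrinking chain of blocks fails because an intersection of $\fc$-many blocks can become empty — this is exactly the behaviour forced by a Hausdorff gap in the unstable case — so the argument must instead control the ``size'' of coordinate regions via $\mu_\cI$ and verify that this control survives at every limit stage of the construction; setting up that bookkeeping, and confirming that the height-$\le 1$ phenomenon genuinely removes the limit-stage obstruction, is the technical heart of the proof. Once $\fc$-saturation is established, the ``in particular'' clause is immediate: if $|M_i|\le\fc$ for all $i$ then $|M|\le\fc$, so $\fc$-saturated means saturated.
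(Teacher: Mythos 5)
Your reduction of the problem is essentially the paper's: you correctly use the simple cover property (Lemma~\ref{L.scp}, Lemma~\ref{lemma:booleanh}) to pass to $h$-literals, and you correctly exploit stability via the failure of item \eqref{itm:3} of Theorem~\ref{thm:palyutinstab} to collapse the positive part of the type to a single representative instance per $h$-formula (this is Lemma~\ref{L.type} in the paper, phrased through the equivalence relations $E_\varphi$), leaving countably many positive conditions and, for each of countably many $h$-formulas $\psi$, a family of fewer than $\fc$ forbidden $E_\psi$-classes. Up to this point you match \S\ref{S.ReducedStable}.

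The gap is your step (ii), which you yourself flag as ``the technical heart'' but then only describe in aspirational terms (``bookkeeping of $\mu_\cI$-ranks\dots verify that this control survives at every limit stage''). What actually closes the argument in the paper is not a transfinite construction of a single element that dodges the $<\fc$ forbidden classes one after another; it is the simultaneous construction of \emph{continuum many} candidates. Two ingredients are needed that your proposal does not supply. First, $h$-maximality (Lemmas~\ref{L.maxtype1} and~\ref{L.partialtype1}): after pruning redundant negative conditions, every finite fragment of the positive part admits arbitrarily many pairwise $E_\psi$-inequivalent realizations, for any finite set of the relevant $\psi$'s. This is a nontrivial use of the simple cover property and is what makes the next step possible. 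Second, the tree construction: one chooses realizations $d_{n,u}$ for $u\in\{0,1\}^n$ of the finite fragments, pairwise $E_{\psi_j}$-inequivalent for $j\le n$, and uses Proposition~\ref{P.L.1} (the layered-partition lemma, which is where $\mu_\cI$ actually enters) to glue them along a partition $I=\bigsqcup_n I_n$ into elements $c_f$, $f\in\{0,1\}^\bbN$, each realizing the full positive part and pairwise $E_{\psi_m}$-inequivalent for every $m$. Then each forbidden condition $\neg\psi_m(x,\bar b)$ can fail for at most one $f$, so at most $|\bt|<\fc$ of the $\fc$-many branches are excluded and some $c_f$ realizes $\bt$. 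Your two-stage plan (first produce a ``solution set'' $B^\ast$ by $\aleph_1$-saturation, then ``move $\tilde c_i$ off'' the forbidden fibres inside $B^\ast$) does not obviously work: $B^\ast$ is not definable, an intersection of $\fc$-many constraints handled sequentially can die at limit stages exactly as you fear, and nothing in your sketch replaces the counting argument that each negative condition eliminates at most one candidate. The appeal to Lemma~\ref{L.dichotomy} to rule out ``Hausdorff-gap-type obstructions'' is a correct heuristic for \emph{why} stability should help, but it is not the mechanism the proof runs on.
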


The remainder of this section, as well as the entire \S\ref{S.ReducedStable} are devoted to a sequence of lemmas leading to the proof of Theorem~\ref{T.cs}, given in \S\ref{S.ProofsOfSaturation}. The following proposition is all you need to know about layered ideals. 

\begin{proposition}\label{P.L.1}
Suppose $\cI$ is a layered ideal on a set $I$ and $\cF_n\Subset \cP(I)$ for $n\in \bbN$. Then there is a partition $I=\bigsqcup_n I_n$ such that for every $X\subseteq \bbN$ and every $(A_n)_{n\in X}\in \prod_{n\in X} \cF_n$ we have $\bigcup_{n\in X} (A_n\cap I_n)\in \cI^+$ if and only if $(\exists^\infty n\in X) A_n\in \cI^+$. 
 \end{proposition}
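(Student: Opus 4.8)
The plan is to build the partition $I=\bigsqcup_n I_n$ so that every $I_n$ lies in $\cI$ and two conditions hold, from which the asserted equivalence follows at once. Fix a function $\mu=\mu_\cI$ witnessing layeredness of $\cI$, and for each $n$ put $G_n:=\bigcup(\cF_n\cap\cI)\in\cI$, the union of the $\cI$-members of $\cF_n$. The conditions are: \textbf{(W)} for every $n$ and every $A\in\cF_n\cap\cI^+$ there is a set $C_{n,A}\subseteq A\cap I_n$ with $C_{n,A}\in\cI$ and $\mu(C_{n,A})\ge n$; and \textbf{(G)} $\bigcup_n(G_n\cap I_n)\in\cI$. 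Granting these: if $A_n\in\cI^+$ for infinitely many $n\in X$, then $\bigcup_{n\in X}(A_n\cap I_n)$ contains all the corresponding sets $C_{n,A_n}$, which lie in $\cI$ with $\mu$-values $\ge n$, so by layeredness $\bigcup_{n\in X}(A_n\cap I_n)\in\cI^+$; and if $\{n\in X:A_n\in\cI^+\}$ is finite, then $\bigcup_{n\in X}(A_n\cap I_n)$ is the union of finitely many sets $A_n\cap I_n\subseteq I_n\in\cI$ and of the sets $A_n\cap I_n$ for the $n\in X$ with $A_n\in\cI$; since such $A_n$ lie in $G_n$, these satisfy $A_n\cap I_n\subseteq G_n\cap I_n$, so the whole union is contained in a finite union of $\cI$-sets together with $\bigcup_n(G_n\cap I_n)\in\cI$, hence lies in $\cI$.

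I would secure \textbf{(W)} first, by a straightforward recursion on $n$: having chosen pairwise disjoint sets $W_0,\dots,W_{n-1}\in\cI$ (each the union of finitely many of the $C$'s already selected), for each $A\in\cF_n\cap\cI^+$ the set $A\setminus(G_n\cup\bigcup_{m<n}W_m)$ is still $\cI$-positive, being an $\cI$-positive set with an $\cI$-set removed, so by layeredness it contains an $\cI$-set $C_{n,A}$ with $\mu(C_{n,A})\ge n$; set $W_n:=\bigcup\{C_{n,A}:A\in\cF_n\cap\cI^+\}\in\cI$, and note $W_n\cap G_n=\emptyset$. The whole difficulty of the construction lies in the tension between \textbf{(W)}, which forces each $I_n$ to be ``large'' (to accommodate an $\cI$-set of $\mu$-mass $\ge n$ inside every $\cI$-positive member of $\cF_n$), and \textbf{(G)}, which forces the sets $G_n\cap I_n$ to add up to an $\cI$-set. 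The resolution is that $W_n$ is disjoint from $G_n$, so the heavy part of $I_n$ contributes nothing to $G_n\cap I_n$, and one is left only with the problem of routing the remaining ``filler'' of $I_n$ so that it meets $G_n$ inside the unavoidable core $\bigcap_m G_m$, which itself lies in $G_0\in\cI$.

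The filler is distributed as follows. Fix an increasing cover $I=\bigcup_k J_k$ with all $J_k\in\cI$, and put $\bigcap_m G_m$ into $I_0$. Every $x$ outside $\bigcap_m G_m$ lies in $Y:=I\setminus\bigcap_m G_m$, so $S_x:=\{n:x\notin G_n\}\neq\emptyset$; for $x\in Y\setminus\bigcup_m W_m$, let $k(x)$ be least with $x\in J_{k(x)}$ and assign $x$ to $I_{n(x)}$, where $n(x)$ is the element of $S_x$ of rank $k(x)+1$ (in increasing order) when $S_x$ is infinite or $k(x)<|S_x|$, and $n(x):=\max S_x$ otherwise. Finally put $I_n:=\{x\in Y\setminus\bigcup_m W_m:n(x)=n\}\cup W_n$ for $n\ge1$, and the same with $\bigcap_m G_m$ adjoined for $n=0$; since $\bigcup_m W_m\subseteq Y$ with the $W_m$ pairwise disjoint, and $\bigcap_m G_m$ is disjoint from $Y$, this is a partition of $I$. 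Each $I_n$ lies in $\cI$: a point $x$ assigned by the first rule with $n(x)=n$ has exactly $k(x)+1$ elements of $S_x$ in $\{0,\dots,n\}$, so $k(x)\le n$ and $x\in J_{k(x)}\subseteq J_n$; a point assigned by the second rule with $\max S_x=n$ satisfies $x\in G_m$ for all $m>n$, so $x\in\bigcap_{m>n}G_m\subseteq G_{n+1}$; hence $I_n\subseteq J_n\cup G_{n+1}\cup W_n$ (with $\bigcap_m G_m$ also for $n=0$), a finite union of $\cI$-sets. Condition \textbf{(W)} holds since $C_{n,A}\subseteq A\cap W_n\subseteq A\cap I_n$. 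And for \textbf{(G)}: when $n\ge1$, every $x\in I_n\setminus W_n$ has $n(x)=n\in S_x$, i.e.\ $x\notin G_n$, so $I_n\cap G_n\subseteq W_n\cap G_n=\emptyset$, while $I_0\cap G_0=\bigcap_m G_m$; therefore $\bigcup_n(G_n\cap I_n)=\bigcap_m G_m\in\cI$, as required.
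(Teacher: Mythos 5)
Your proof is correct and follows essentially the same strategy as the paper's: a recursive construction that places, inside $I_n$, pairwise disjoint $\cI$-sets of $\mu_\cI$-mass at least $n$ contained in each $\cI$-positive member of $\cF_n$ (your $W_n$, the paper's $\bigcup_B W_B$), while arranging that the $\cI$-members of $\cF_n$ meet $I_n$ only in a set whose union over $n$ lies in $\cI$. The only real difference is bookkeeping: the paper achieves the second point by simply absorbing $\bigcup(\cF_{n+1}\cap\cI)$ into $I_n$, so that $A\cap I_n=\emptyset$ for $A\in\cF_n\cap\cI$ and $n>0$, whereas your rank-based routing of the filler points via $S_x$ and $k(x)$ is correct but more elaborate than necessary.
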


\begin{proof}
Fix $\mu_\cI\colon \cP(I)\to [0,\infty]$ and sets $Y_n\in \cI$ such that $I=\bigcup_n Y_n$ as in Definition~\ref{D.layered}. We will find a partition $I=\bigsqcup_n I_n$ that satisfies the following for all $n$ and $A\in \cF_n$. 
\begin{enumerate}
\item\label{1.l} If $n>0$ and $A\in \cI$ then $A\cap I_n=\emptyset$. 
\item \label{2.l} If $A\in \cI^+$ then $\mu_\cI(A\cap I_j)\geq j$ for all $j\geq n$. 
\item \label{3.l} $I_n\in \cI$ and $\bigcup_{j\leq n} I_j\supseteq \bigcup_{j\leq n} Y_n$. 
\end{enumerate} 
We will describe the recursive construction of the sequence $(I_n)$ with the required properties.

Set $I_0=Y_0\cup \bigcup\{A\mid A\in (\cF_0\cup \cF_1)\cap \cI\}$. Assume that $n\geq 1$ and disjoint $I_j\in \cI$, for $j<n$, had been chosen and they satisfy \eqref{1.l}, \eqref{2.l}, and \eqref{3.l}. Set $I^0_n=Y_n\cup\bigcup\{A\mid A\in \cF_{n+1}\cap \cI\}$. Note, $I_n^0\in\cI$.

%\left(Y_n\setminus \bigcup_{j<n}(I_j\cup Y_j) \right)\cup \bigcup\{A\mid A\in \cF_{n+1}\cap \cI\}$.

For $B\in \bigcup_{j\leq n}\cF_j\cap \cI^+$, find $W_B\in \cI$ with $W_B\subseteq B\setminus \bigcup_{j<n}I_j$ such that $\mu_{\cI}(W_B)\geq n$. Set 
\[
I_n=(I^0_n\cup \bigcup\{W_B\mid B\in \bigcup_{j\leq n}\cF_j\cap \cI^+\})\setminus\bigcup_{j<n}I_j.
\] 
As a union of finitely many sets in~$\cI$, $I_n$ belongs to $\cI$, and this defines a partition $I=\bigsqcup_n I_n$ because the sets $I_n$ are disjoint and $\bigcup_n I_n\supseteq \bigcup_n Y_n$ by construction. The properties \eqref{1.l} and \eqref{2.l} also obviously hold. 

Assume that $X\subseteq \bbN$ and $(A_n)_{n\in X}\in \prod_{n\in X} \cF_n$. If there is $m$ such that $A_n\in \cI$ for all $n>m$, then $\bigcup_{n\in X} (A_n\cap I_n)\subseteq \bigcup_{n\leq m} I_n$ belongs to $\cI$. Now assume that the set $Y$ of all $n\in X$ such that $A_n\in \cI^+$ is infinite. Then $\mu_{\cI}(A_n\cap I_n)\geq n$ for all $n\in Y$, and therefore $\mu_{\cI}(\bigcup_{n\in X} (A_n\cap I_n))=\infty$ and $\bigcup_{n\in X} (A_n\cap \cI_n)\in \cI^+$ as required. 
\end{proof}

The following is such a simple consequence of Proposition~\ref{P.L.1} that it is easier to prove it from scratch. 

\begin{lemma} \label{L.atomless}
If $\cI$ is a layered ideal on $I$ and $A\in \cI^+$ then $\cP(\bbN)/\Fin$ embeds into $\cP(A)/(\cI\rs A)$. In particular, the Boolean algebra $\cP(I)/\cI$ is atomless. 
\end{lemma}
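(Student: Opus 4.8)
The plan is to build, inside the given $\cI$-positive set $A$, a partition $A=\bigsqcup_{n\in\bbN}E_n$ into sets $E_n\in\cI$ arranged so that $\mu_\cI(E_n)\ge n$; the map $X\mapsto\bigcup_{n\in X}E_n$ will then induce the desired embedding. The point of the two requirements is that finite unions of the $E_n$ stay in $\cI$ (each $E_n\in\cI$), while infinite unions are forced into $\cI^+$ (by monotonicity of $\mu_\cI$, such a union has $\mu_\cI$-value $\ge\sup_n n=\infty$); reconciling these two demands is really the only thing that needs care, and the bound $\mu_\cI(E_n)\ge n$ is exactly what does it.

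First I would isolate two elementary consequences of Definition~\ref{D.layered}: (i) if $B\in\cI^+$ and $C\in\cI$ then $B\setminus C\in\cI^+$ (otherwise $B=(B\setminus C)\cup C\in\cI$), and (ii) if $B\in\cI^+$ and $n\in\bbN$ then there is $B'\subseteq B$ with $B'\in\cI$ and $\mu_\cI(B')\ge n$ (this is the clause characterizing $\cI^+$ in a layered ideal). I also fix, using layeredness, sets $Y_n\in\cI$ with $I=\bigcup_n Y_n$, and disjointify them inside $A$ by setting $Z_n=(A\cap Y_n)\setminus\bigcup_{m<n}(A\cap Y_m)$, so that $A=\bigsqcup_n Z_n$ with every $Z_n\in\cI$; folding the $Z_n$'s into the construction below guarantees that $\bigcup_n E_n$ is literally equal to $A$, which is what makes the induced map preserve complements (hence be a genuine Boolean embedding, not merely an order embedding).

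Next I would define the $E_n$ recursively. At stage $n$, with pairwise disjoint $E_0,\dots,E_{n-1}\in\cI$ already chosen, $C:=\bigcup_{k<n}E_k$, and $\bigcup_{k<n}Z_k\subseteq C$: using (i) on $A\setminus C\in\cI^+$ and then (ii), pick $B_n\subseteq A\setminus C$ with $B_n\in\cI$ and $\mu_\cI(B_n)\ge n$, and put $E_n:=(B_n\cup Z_n)\setminus C$. Then $E_n\in\cI$ (a subset of a union of two $\cI$-sets), $E_n$ is disjoint from $E_0,\dots,E_{n-1}$, $E_n\supseteq B_n$ so $\mu_\cI(E_n)\ge n$ by monotonicity, and $\bigcup_{k\le n}Z_k\subseteq\bigcup_{k\le n}E_k\subseteq A$. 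In the end $\bigcup_n E_n\supseteq\bigcup_n Z_n=A$, so $A=\bigsqcup_n E_n$ as promised.

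Finally I would verify that $\Phi\colon\cP(\bbN)\to\cP(A)$, $\Phi(X)=\bigcup_{n\in X}E_n$, is a Boolean homomorphism (immediate, since the $E_n$ partition $A$) and that its composition with the quotient $\cP(A)\to\cP(A)/(\cI\rs A)$ has kernel exactly $\Fin$: a finite $X$ gives a finite union of $\cI$-sets, hence an element of $\cI\rs A$; an infinite $X$ gives $\Phi(X)\supseteq E_n$ for all $n\in X$, so $\mu_\cI(\Phi(X))=\infty$ and $\Phi(X)\notin\cI$. The induced map $\cP(\bbN)/\Fin\to\cP(A)/(\cI\rs A)$ is therefore an injective Boolean homomorphism, i.e., an embedding. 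For the ``in particular'', apply this with an arbitrary $A\in\cI^+$: partitioning $\bbN$ into two infinite pieces $S\sqcup T$ yields $B:=\bigcup_{n\in S}E_n\subseteq A$ with both $B$ and $A\setminus B=\bigcup_{n\in T}E_n$ in $\cI^+$, so $[A]_\cI$ is not an atom; since every nonzero element of $\cP(I)/\cI$ is of this form, $\cP(I)/\cI$ is atomless. I do not foresee any serious obstacle; the construction is routine once the role of the bound $\mu_\cI(E_n)\ge n$ is recognized.
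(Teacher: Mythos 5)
Your proof is correct and follows essentially the same route as the paper: recursively choose disjoint $\cI$-sets $E_n\subseteq A$ with $\mu_\cI(E_n)\ge n$ and map $X\mapsto\bigcup_{n\in X}E_n$, so that finite unions land in $\cI$ and infinite unions are forced out of $\cI$ by monotonicity of $\mu_\cI$. Your additional step of folding in the cover $I=\bigcup_n Y_n$ so that the $E_n$ actually partition $A$ (ensuring the induced map preserves complements and the top element) is a small but worthwhile refinement of the paper's one-line argument.
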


\begin{proof}
Recursively find $X_n\subseteq A\setminus \bigcup_{j<n} X_j$ for $n\in \bbN$ such that $j\leq \mu_\cI(X_j)<\infty$. Then the function $\Phi\colon \cP(\bbN)\to \cP(A)$ defined by $\Phi(Y)=\bigcup_{j\in Y} X_j$ is a lifting of an injective homomorphism from $\cP(\bbN)/\Fin$ into $\cP(A)/\cI$. 
\end{proof}

\section{Reduced products with stable theory}\label{S.ReducedStable}

We continue the proof of Theorem~\ref{T.cs} by analyzing types over a reduced product with stable theory. 
Suppose that $\varphi(x,\bar y)$ is an $h$-formula. Consider the formulas
\begin{align*}
D_\varphi(x)&=\exists \bar y\,\varphi(x,\bar y),\\
E_\varphi(x,x')&=D_\varphi(x)\land \forall \bar y\,(\varphi(x,\bar y)\rightarrow \varphi(x',\bar y)), \\
E_\varphi’(x,x’)&=E_\varphi(x,x’)\wedge E_\varphi(x’,x).
\end{align*}
%If $a$ is an element of a structure $M$ in the language of $\varphi$ such that $E_\varphi^M$ is an equivalence relation on $D_\varphi^M$, then by $[a]_\varphi$ we denote the $E_\varphi$-equivalence class of $a$. 

\begin{lemma}\label{L.er-h}
Suppose that $\cI$ is an ideal on a set $I$, $M=\prod_i M_i/\cI$, and $\varphi(x,\bar y)$ is an $h$-formula. Denote by $\pi\colon\prod_iM_i\to M$ the quotient map. Then $D_\varphi$, $E_\varphi$ and $E’_\varphi$ are $h$-formulas, and the following holds:
\begin{enumerate}
\item $E_\varphi^M$ defines an equivalence relation on $D_\varphi^M$ if and only if
\[
\{i\mid E_\varphi^{M_i}\text{ does not define an equivalence relation on }D_\varphi^{M_i}\}\in\cI;
\] 
\item if $E_\varphi^M$ defines an equivalence relation on $D_\varphi^M$, then for all $c,d$ in $\prod_iM_i$ we have the following. 
\begin{enumerate}
\item $\pi(d)\in D_\varphi^M$ if and only if $\{i\mid d_i\notin D_\varphi^{M_i}\}\in\cI$, and 
\item $M\models E_\varphi(\pi(c),\pi(d))$ if and only if 
\[
\{i\mid M_i\models \neg E_\varphi(c_i, d_i)\}\in\cI.
\] 
\end{enumerate}
\end{enumerate} 
\end{lemma}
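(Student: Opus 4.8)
The plan is to prove each of the three assertions essentially by unwinding the definitions of $D_\varphi$, $E_\varphi$, $E_\varphi'$ and applying Palyutin's theorem (Theorem~\ref{T.Palyutin}) to these formulas and their syntactic pieces.

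First I would verify that $D_\varphi$, $E_\varphi$ and $E_\varphi'$ are $h$-formulas, which is a direct check against Definition~\ref{Def.h-formulas}: $D_\varphi(x)=\exists\bar y\,\varphi(x,\bar y)$ is obtained by existential quantification from an $h$-formula; $E_\varphi(x,x')$ has the shape $\exists\bar y\,\varphi(x,\bar y)\wedge\forall\bar y(\varphi(x,\bar y)\rightarrow\varphi(x',\bar y))$ which is literally the fourth generating scheme $\exists x\,\psi\wedge\forall x(\psi\rightarrow\chi)$ with $\psi=\varphi(x,\bar y)$ and $\chi=\varphi(x',\bar y)$ (one must be a little careful that the bound variable here is the tuple $\bar y$ rather than a single variable, but the class of $h$-formulas is evidently closed under tupled quantification, e.g.\ by iterating); and $E_\varphi'$ is a conjunction of two $h$-formulas. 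So all three lie in the class $\mathcal C$.

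For (1): ``$E_\varphi^M$ defines an equivalence relation on $D_\varphi^M$'' is the conjunction of three first-order sentences — reflexivity on $D_\varphi$, symmetry, and transitivity — and each of these is (equivalent to) a universally quantified implication between $h$-formulas, hence an $h$-sentence. Concretely, reflexivity is $\forall x(D_\varphi(x)\rightarrow E_\varphi(x,x))$; note $D_\varphi(x)\rightarrow E_\varphi(x,x)$ is itself of the form $\psi\rightarrow(\exists\bar y\,\psi'\wedge\forall\bar y(\psi'\rightarrow\psi'))$ and one checks this reduces to an $h$-formula, or more simply one observes $E_\varphi(x,x)$ is logically equivalent to $D_\varphi(x)$ so reflexivity holds automatically; symmetry is $\forall x\forall x'(E_\varphi(x,x')\rightarrow E_\varphi(x',x))$ and transitivity is $\forall x\forall x'\forall x''((E_\varphi(x,x')\wedge E_\varphi(x',x''))\rightarrow E_\varphi(x,x''))$, again of the permitted shape $\forall\bar u(\chi_1\rightarrow\chi_2)$ with $\chi_1,\chi_2$ $h$-formulas. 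Since each of these sentences is an $h$-sentence, Theorem~\ref{T.Palyutin} says it holds in $M$ iff the set of $i$ where it fails lies in $\cI$; intersecting over the (finitely many) sentences and using that $\cI$ is an ideal gives the displayed equivalence in (1).

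For (2), assume $E_\varphi^M$ is an equivalence relation on $D_\varphi^M$, so by (1) the set $Z$ of bad indices is in $\cI$. Part (2a): $D_\varphi(x)$ is an $h$-formula, so Theorem~\ref{T.Palyutin} directly gives $M\models D_\varphi(\pi(d))$ iff $\{i\mid M_i\models\neg D_\varphi(d_i)\}\in\cI$, which is exactly (2a). Part (2b): $E_\varphi(x,x')$ is an $h$-formula, so Theorem~\ref{T.Palyutin} again gives $M\models E_\varphi(\pi(c),\pi(d))$ iff $\{i\mid M_i\models\neg E_\varphi(c_i,d_i)\}\in\cI$. This is already the statement; the hypothesis that $E_\varphi$ defines an equivalence relation (hence that $Z\in\cI$) is what one uses elsewhere to make sense of $E_\varphi^M$ as an equivalence relation but is not strictly needed for the raw equivalence in (2b) — though stating it under that hypothesis is harmless, and I would keep it to match the lemma. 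I expect no genuine obstacle here: the only thing demanding care is the bookkeeping that ``$E_\varphi$ is an equivalence relation on $D_\varphi$'' unpacks into finitely many $h$-sentences (and in particular that reflexivity-on-a-definable-subset and transitivity are $h$-sentences of the right syntactic form), so that the single application of Theorem~\ref{T.Palyutin} to each of them, combined over finitely many sentences, yields the membership statement in (1). Everything else is a direct invocation of Theorem~\ref{T.Palyutin}.
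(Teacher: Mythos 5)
There is a genuine gap in your treatment of part (1), and it sits exactly at the one point where something can actually go wrong. You assert that symmetry and transitivity of $E_\varphi$ are $h$-sentences because they have ``the permitted shape $\forall\bar u(\chi_1\rightarrow\chi_2)$ with $\chi_1,\chi_2$ $h$-formulas''. But a bare universally quantified implication is \emph{not} one of the generating schemes in Definition~\ref{Def.h-formulas}: the only scheme involving an implication is the guarded one, $\exists x\,\chi_1\wedge\forall x(\chi_1\rightarrow\chi_2)$. This is not a pedantic point---an unguarded $\forall x(\chi_1\rightarrow\chi_2)$ genuinely fails the \L o\'s-type equivalence of Theorem~\ref{T.Palyutin} (it can hold vacuously in the reduced product while $\chi_1\wedge\neg\chi_2$ is witnessed on an $\cI$-positive set of fibers), so Theorem~\ref{T.Palyutin} cannot be applied to the symmetry sentence as you do. Adding the guard $\exists x\exists x'\,E_\varphi(x,x')$ does produce an $h$-sentence, but then Palyutin controls the set of $i$ where \emph{either} $E_\varphi^{M_i}$ is empty \emph{or} symmetry fails, which is not the displayed set in (1) without further argument. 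For reflexivity and transitivity the faulty justification is harmless, since (as you note for reflexivity) both hold automatically in every structure; but symmetry is precisely the property that can fail, so the nontrivial half of (1) is unproven as written.

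The paper avoids this by a different decomposition: it observes that $E_\varphi$ is automatically reflexive and transitive on $D_\varphi$ and that $E'_\varphi$ (which \emph{is} an $h$-formula, being a conjunction of two instances of the guarded scheme) defines the symmetrization of $E_\varphi$; hence $E_\varphi^M$ defines an equivalence relation if and only if the relations defined by $E_\varphi$ and $E'_\varphi$ coincide, and this coincidence is checked fiberwise by applying Theorem~\ref{T.Palyutin} to the two formulas $E_\varphi$ and $E'_\varphi$ themselves, evaluated at elements of $M$, rather than to a quantified sentence. Your verification that $D_\varphi$, $E_\varphi$, $E'_\varphi$ are $h$-formulas and your parts (2a) and (2b) are correct and agree with the paper; to repair (1), replace the symmetry-as-$h$-sentence step by the comparison of $E_\varphi$ with $E'_\varphi$.
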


\begin{proof} 
From Definition~\ref{Def.h-formulas} it is clear that if $\varphi$ is an $h$-formula then so are $D_\varphi$, $E_\varphi$ and $E’_\varphi$. It is also clear from the definitions that $E_\varphi$ defines a transitive relation on $D_\varphi$, and $E_\varphi’$ defines the symmetrization of $E_\varphi$. Thus $E_\varphi$ defines an equivalence relation if and only if this relation coincides with the one defined by $E_\varphi’$. The thesis then follows from Theorem~\ref{T.Palyutin}.
%By Theorem~\ref{T.Palyutin} for $a,b\in\prod_iM_i$, we have 
%\begin{enumerate}
% \item $M\models D_\varphi(\pi(a))$ if and only if $\{i\mid M_i\models \neg D_\varphi(a_i)\}\in \cI$, 
% \item $M\models E_\varphi(\pi(a),\pi(b))$ if and only if $\{i\mid M_i\models \neg E_\varphi(a_i,b_i)\}\in \cI$. 
% \item $M\models E_\varphi'(\pi(a),\pi(b))$ if and only if $\{i\mid M_i\models \neg E_\varphi'(a_i,b_i)\}\in \cI$. 
%\end{enumerate} 
%This is exactly what we set out to prove. 
%The second assertion follows from the first part, $E_\varphi$ and $D_\varphi$ being $h$-formulas, and Theorem~\ref{T.Palyutin}.
\end{proof}

In what follows, `type' is a consistent set of formulas (\S\ref{S.Types}) and we will only consider $1$-types (see \S\ref{S.arity}). 

A proof of the following has been extracted from the proof of \cite[Theorem~4]{palmgren}, where its variant for structures with simple cover property and not necessarily stable theory was used to give a simple proof of $\aleph_1$-saturation of reduced products modulo $\Fin$. 
If $N$ is a subset of $M$ and $\bt$ is a 1-type over $N$, we call $\bt$ $N$-complete if for every formula $\varphi(x,\bar a)$ with parameters in $N$, the type $\bt$ contains either $\varphi(x,\bar a)$ or its negation $\neg\varphi(x,\bar a)$.

\begin{lemma}\label{L.type}
	Let $M$ be a structure with the simple cover property and stable theory, let $N \subseteq M$. Suppose that $\bt$ is an $N$-complete 1-type over $N$. Then there are disjoint sets of $h$-formulas $A_\bt$ and $B_\bt$ with at most $x$ as a free variable, $\bar a_\varphi$ in $N$ for all $\varphi\in A_\bt$ (of the appropriate sort), and, for each $\psi\in B_{\bt}$, a set of tuples $\cX_\psi$ in $N$ of the appropriate sort such that 
	\[
	\{\varphi(x,\bar a_\varphi)\mid \varphi\in A_\bt\}\cup 
	\{\lnot \psi(x,\bar b)\mid\psi\in B_\bt, \bar b\in \mathcal X_\psi\}
	\]
	axiomatizes $\bt$. 
\end{lemma}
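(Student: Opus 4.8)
The goal is to show that any $N$-complete $1$-type $\bt$ over $N$ in a structure $M$ with the simple cover property and stable theory can be axiomatized by $h$-formulas with positive parameters, together with \emph{negated} $h$-formulas whose parameters range over a set $\cX_\psi$. The starting observation is Lemma~\ref{lemma:booleanh}: every formula over $N$ is, in $M$, equivalent to a Boolean combination of $h$-formulas (with parameters in $N$). So $\bt$ is equivalent to a set of such Boolean combinations. Writing each such Boolean combination in disjunctive normal form and using $N$-completeness, for each conjunct in the original type we may assume $\bt$ decides each $h$-formula $\theta(x,\bar b)$ with parameters in $N$: it contains $\theta(x,\bar b)$ or $\neg\theta(x,\bar b)$. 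Thus $\bt$ is axiomatized by
\[
\{\varphi(x,\bar b)\mid \varphi \text{ an } h\text{-formula}, \bar b\in N,\ \varphi(x,\bar b)\in \bt\}\cup\{\neg\psi(x,\bar b)\mid \psi \text{ an } h\text{-formula}, \bar b\in N,\ \neg\psi(x,\bar b)\in\bt\}.
\]
This already gives the rough shape; the real content is to organize the positive part so that for each $h$-formula-shape $\varphi$ only \emph{one} parameter $\bar a_\varphi$ is needed, i.e.\ to absorb the conjunction of all the positive instances $\varphi(x,\bar b)\in\bt$ (for fixed $\varphi$) into a single instance.

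The mechanism for this absorption is the equivalence relations $E_\varphi$ and the formula $D_\varphi$ introduced just before Lemma~\ref{L.er-h}, combined with stability. The idea: suppose $\varphi(x,\bar b_0)$ and $\varphi(x,\bar b_1)$ both lie in $\bt$. Stability of $T$ (equivalently, via Theorem~\ref{thm:palyutinstab}, the failure of the order property for $h$-formulas) forces the family $\{\varphi(M,\bar b)\mid \bar b\in N\}$ to be ``small'' in a way that prevents strictly descending chains of the sets $\varphi(M,\bar b)$; more precisely, by the dichotomy philosophy behind Lemma~\ref{L.dichotomy} and item~\eqref{itm:3} of Theorem~\ref{thm:palyutinstab}, stability means that whenever two instances $\varphi(M,\bar b_0)$ and $\varphi(M,\bar b_1)$ intersect they must be \emph{equal}. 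Hence on the ``trace'' relevant to $\bt$, the relation $E_\varphi$ (or really $E'_\varphi$) behaves like equality of the definable sets $\varphi(M,\bar b)$, so the collection of all positive instances $\varphi(x,\bar b)\in\bt$ all define the same subset of $M$, and a single representative parameter $\bar a_\varphi$ suffices — this is where the $N$-completeness is used again, to pick a witnessing parameter. One technical point: we may have to replace $\varphi$ by $E_\varphi$, $D_\varphi$, or a suitable $h$-formula built from them (all $h$-formulas by Lemma~\ref{L.er-h}) to make this ``same set'' statement literally expressible; the stability input is exactly what guarantees $E_\varphi$ is an equivalence relation with the collapsing property we need, via Theorem~\ref{thm:palyutinstab}\eqref{itm:3}.

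For the negative part no such collapse is possible or needed: we simply collect, for each $h$-formula shape $\psi$, the set $\cX_\psi$ of \emph{all} parameters $\bar b\in N$ with $\neg\psi(x,\bar b)\in\bt$, and include $\{\neg\psi(x,\bar b)\mid \bar b\in\cX_\psi\}$. Disjointness of $A_\bt$ and $B_\bt$ can be arranged by bookkeeping: if an $h$-formula shape occurs both positively and negatively we can relabel (e.g.\ tag it), or observe that after the collapse the positive occurrences are of the special forms $D_\varphi$, $E_\varphi$ and their conjunctions, which we can keep formally distinct from the negated shapes. Finally one checks that the displayed set indeed \emph{axiomatizes} $\bt$: it is contained in $\bt$ by construction, and conversely every formula in $\bt$ — being a Boolean combination of $h$-formulas over $N$ that $\bt$ decides — is a consequence of the positive and negative $h$-instances we retained, using that $\bt$ is $N$-complete and that we retained \emph{all} the decided $h$-instances (the positive ones up to the stability-justified collapse).

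\textbf{Main obstacle.} The delicate point is the positive-part collapse: justifying that, under stability, all positive instances $\varphi(x,\bar b)\in\bt$ of a fixed $h$-formula shape can be replaced by one. This requires translating stability (no $h$-formula has the order property, Theorem~\ref{thm:palyutinstab}) into the statement ``$\varphi(M,\bar b_0)\cap\varphi(M,\bar b_1)\neq\emptyset\implies\varphi(M,\bar b_0)=\varphi(M,\bar b_1)$'', applied along the type — and then noticing this forces the $E_\varphi$-classes of the various $\bar b$'s occurring positively to coincide, so their common definable set is cut out by a single instance (or by $D_{\varphi'}$ for an auxiliary $h$-formula $\varphi'$). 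Handling the case where the raw $\varphi$ needs to be massaged into an $h$-formula of the form $D_{\varphi'}$/$E_{\varphi'}$ to express ``equal to $\varphi(M,\bar a_\varphi)$'' while staying within $h$-formulas (Lemma~\ref{L.er-h}) is the bookkeeping heart of the argument.
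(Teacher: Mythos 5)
Your proposal takes essentially the same route as the paper: reduce $\bt$ to a set of $h$-formulas and negated $h$-formulas using Lemma~\ref{lemma:booleanh} and $N$-completeness, then invoke stability, via the negation of item~\eqref{itm:3} of Theorem~\ref{thm:palyutinstab}, to conclude that two instances $\varphi(M,\bar b_0)$ and $\varphi(M,\bar b_1)$ of a fixed $h$-formula are either disjoint or equal; since all positive instances occurring in the consistent type are co-realized in an elementary extension, they define the same set and a single representative $\varphi(x,\bar a_\varphi)$ implies the rest, while the negative instances are collected verbatim into the sets $\cX_\psi$. Your identification of the positive-part collapse as the crux, and of stability as the mechanism for it, matches the paper exactly.

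The one step that does not work as you wrote it is the disjointness of $A_\bt$ and $B_\bt$. ``Tagging'' or relabelling a formula is not available here: $A_\bt$ and $B_\bt$ must be sets of $h$-formulas of the fixed language, and after the collapse the positive occurrences are still the original formulas $\varphi(x,\bar a_\varphi)$, not formulas of the special shapes $D_\varphi$ or $E_\varphi$, so neither of your two suggestions actually yields disjoint sets. The correct (and short) fix, which is the one the paper uses, is a second application of the same disjoint-or-equal dichotomy: if $\varphi$ occurs both positively (with chosen parameter $\bar a_\varphi$) and negatively (with some parameter $\bar b\in\cX_\varphi$), then consistency of $\bt$ forces $\varphi(M,\bar a_\varphi)\neq\varphi(M,\bar b)$, hence these instances are disjoint, hence $\varphi(x,\bar a_\varphi)$ already implies $\lnot\varphi(x,\bar b)$ modulo $\Th(M)$. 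So every such $\varphi$ can simply be deleted from the negative side without changing the set of consequences, i.e.\ one takes $B_\bt=C_\bt\setminus A_\bt$ where $C_\bt$ is the set of shapes occurring negatively. With that repair your argument is complete and coincides with the paper's.
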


\begin{proof} 
By Lemma~\ref{lemma:booleanh} and the simple cover property, every formula in $\bt(x)$ is a Boolean combination of $h$-formulas. Therefore $\bt$ is axiomatized by a set $\Theta(\bt)$ of finite disjunctions of $h$-formulas and negations of $h$-formulas with parameters in~$N$. By $N$-completeness, of the type $\bt$, we can even assume that $\Theta(\bt)$ contains only $h$-formulas and negations of $h$-formulas. Let 
\[
A_\bt=\{\varphi(x,\bar y)\mid \varphi\text{ is an $h$-formula and 
$\varphi(x,\bar a)\in \Theta(\bt)$ for some $\bar a$}\}.
\]
Since the theory of $M$ is stable, by Theorem~\ref{thm:palyutinstab}, for every $h$-formula $\varphi(x,\bar y)$, $E_\varphi$ defines an equivalence relation on $D_\varphi^M$, as $\bar y$ varies among tuples of the appropriate sort. Therefore for $\varphi\in A_\bt$ we can choose $\bar a_\varphi$ to be an arbitrary tuple in $M$ such that $\varphi(x,\bar a_\varphi)\in\Theta(\bt)$, and (modulo the theory of $M$) this implies all other instances $\varphi(x,\bar b)$ that belong to $\Theta(\bt)$. 
 
Consider $C_\bt$ to be the set of all $h$-formulas $\psi(x,\bar b)$ whose negation belongs to $\Theta(\bt)$. For any such $\psi$, let $\mathcal X_\psi$ be the set of tuples $\bar b$ such that $\neg\psi(x,\bar b)\in\Theta(\bt)$. It is possible that $C_\bt$ and $A_\bt$ intersect (for example, perhaps both $x=a$ and $x\neq b$ could belong to $\Theta(\bt)$). Nevertheless, if $\varphi$ belongs to both $A_\bt$ and $B_\bt$ (witnessed by $\bar a_\varphi$ and $\bar b_\varphi$), by consistency, $\varphi(x,\bar a_\varphi)$ implies that $\neg\varphi(x,\bar b_{\varphi})$, and therefore we can remove $\varphi$ from $B_\bt$. Setting $B_\bt=C_\bt\setminus A_\bt$ we get the appropriate sets.
\end{proof}

\begin{definition}\label{D.h-axiomatization}
An \emph{$h$-axiomatization} of a type $\bt$ is a disjoint pair of sets of formulas, $\bt_A$ and $\bt_B$ such that every formula in $\bt_A$ is an $h$-formula, every formula in $\bt_B$ is the negation of an $h$-formula, and $\bt_A\sqcup \bt_B$ axiomatizes $\bt$. %\marginpar{IF: We want to make it clear that there is a distinction between a type and an axiomatization of a type, and also that confusing them is harmless.} 
\end{definition}

Lemma~\ref{L.type}, asserts that every $N$-complete type over a subset $N$ of a structure with the simple cover property and stable theory has an $h$-axiomatization. We can therefore focus on types of the form $\bt_A\sqcup\bt_B$. Confusing a type with its axiomatization is a harmless practice that we will indulge in. 

Given a type $\bt$ of the form $\bt_A\sqcup \bt_B$, $n\geq 1$ and $B_0\Subset B_\bt$, let $\bt[B_0,n]$ be the $n$-type in variables $x_0,\dots, x_{n-1}$ which includes $\bt_{A}(x_i)$ for $i<n$ as well as the conditions $\lnot E_\psi(x_i,x_j)$ for all $\psi\in B_0$ and $i<j<n$. 

\begin{definition}
A type is \emph{2-$h$-maximal} if it has an $h$-axiomatization $\bt_A\sqcup \bt_B$ for which $\bt[\{\psi\},2]$ is consistent for every $\psi\in B_\bt$. Such axiomatization is called \emph{2-$h$-maximal}.%\marginpar{IF: Modifying definition helped improve the presentation.} 

A type is \emph{$h$-maximal} if it has an $h$-axiomatization $\bt_A\sqcup \bt_B$ for which $\bt[B_0,n]$ is consistent for every $B_0\Subset B_\bt$ and every $n\geq 2$. Such axiomatization is called \emph{$h$-maximal}. 
\end{definition}

Equivalently, an $h$-axiomatization $\bt_A\sqcup \bt_B$ is 2-$h$-maximal if there are no $A_0\Subset A_\bt$ and $\psi\in B_\bt$ for which $\bigcap_{\varphi\in A_0}\varphi(M,\bar a_{\varphi})$ is included in a unique $E_\psi$-equivalence class. It is $h$-maximal if every set of the form $\bigcap_{\varphi\in A_0}\varphi(M,\bar a_{\varphi})$ intersects infinitely many equivalence classes $E_\psi$, for $\psi\in B_\bt$. Lemma~\ref{L.maxtype1} and Lemma~\ref{L.partialtype1} below together imply that every type over a model with the simple cover property which has an $h$-axiomatization is $h$-maximal.

\begin{lemma} \label{L.maxtype1} 
Every type $\bt$ with an $h$-axiomatization is 2-$h$-maximal. 
\end{lemma}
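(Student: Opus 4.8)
The plan is to show that any type $\bt$ with an $h$-axiomatization $\bt_A\sqcup\bt_B$ already is $2$-$h$-maximal, i.e.\ that for every $\psi\in B_\bt$ the $2$-type $\bt[\{\psi\},2]$ is consistent. Spelled out, $\bt[\{\psi\},2]$ consists of $\bt_A(x_0)$, $\bt_A(x_1)$, the negated $h$-formulas $\lnot\chi(x_0,\bar b)$ and $\lnot\chi(x_1,\bar b)$ for $\chi\in B_\bt$, $\bar b\in\cX_\chi$, together with the single extra condition $\lnot E_\psi(x_0,x_1)$. Since $\bt$ itself is consistent, it is realised in some elementary extension, so what must be produced is a model (or just an elementary extension of $M$, by compactness) containing two realisations $c_0,c_1$ of $\bt$ with $\lnot E_\psi(c_0,c_1)$. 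The obvious obstruction to this is that $\bt$ might \emph{force} all its realisations into a single $E_\psi$-class; the point of the lemma is that the $h$-axiomatization structure rules this out, because $\lnot\psi(x,\bar b)$ is not an $h$-formula but only a negation of one, and negations of $h$-formulas are exactly the formulas whose truth value on a reduced product is governed by an $\cI^+$-condition on fibers rather than an $\cI$-condition.

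The key idea I would use is to pass to a reduced power. By completeness of $T=\Th(M)$ and the fact (Corollary~\ref{C.Preservation}) that $T$ has the simple cover property and is preserved under reduced products, the reduced power $M^\bbN/\Fin$ is again a model of $T$ and is an elementary extension of $M$ (via the diagonal embedding). So it suffices to realise $\bt[\{\psi\},2]$ in $M^\bbN/\Fin$. Pick any single realisation $e$ of $\bt$ in $M$ (exists since $\bt$ is consistent and, by Lemma~\ref{L.type}-style reasoning or directly by $\aleph_1$-saturation of $M^\bbN/\Fin$, the countably many conditions of $\bt$ restricted appropriately can be met — more cleanly: realise $\bt$ in some elementary extension and then, since we only need two witnesses differing at one $E_\psi$-coordinate, work inside $M^\bbN/\Fin$ which is $\aleph_1$-saturated by the Jónsson–Olin result quoted in the introduction). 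Now split $\bbN=X_0\sqcup X_1$ into two infinite pieces and define $c_0$ and $c_1$ in $\prod_n M$ as follows: on a suitable $\cI^+$-set of coordinates make $c_0$ and $c_1$ realise, fiberwise, two elements lying in distinct $E_\psi$-classes while still satisfying every finite subset of $\bt_A$ and avoiding every $\psi'$-class named in $\bt_B$; elsewhere let them agree with a lift of $e$. Because each $\varphi\in A_\bt$ is an $h$-formula, Theorem~\ref{T.Palyutin} guarantees $\bt_A(\pi(c_0))$ and $\bt_A(\pi(c_1))$ hold as long as each holds on a co-$\cI$ set of fibers; because each condition in $\bt_B$ is $\lnot(h$-formula$)$, Theorem~\ref{T.Palyutin} guarantees it holds at $\pi(c_0)$ (resp.\ $\pi(c_1)$) as soon as it holds on an $\cI$-positive set of fibers; and $\lnot E_\psi(\pi(c_0),\pi(c_1))$ holds because $\lnot E_\psi$ is again a negation of an $h$-formula and we have arranged $\lnot E_\psi$ on an $\cI^+$-set of fibers.

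The main obstacle, and the step that needs care, is producing, \emph{on a single fiber $M$}, two elements $c_0^{(n)},c_1^{(n)}$ that simultaneously (i) lie in distinct $E_\psi$-classes, (ii) satisfy the relevant finite fragment of $\bt_A$ with the chosen parameters, and (iii) avoid the finitely many $E_{\psi'}$-classes named so far in $\bt_B$. This is exactly where one uses that $\bt_A\sqcup\bt_B$ is a genuine $h$-axiomatization of a \emph{consistent} type: consistency of $\bt$ already means every finite fragment is realised, and since $\lnot\psi$ appears in $\bt$ there must, in some model, be a realisation of that fragment outside \emph{any} prescribed single $E_\psi$-class — otherwise $\psi$ would be $\bt$-equivalent to $E_\psi$ with a fixed parameter and $\bt$ would not be genuinely ``two-sided'' at $\psi$. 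A clean way to package this is: if $\bt[\{\psi\},2]$ were inconsistent, then modulo $\bt$ the formula $\lnot\psi(x,\bar b_0)$ (for the relevant $\bar b_0\in\cX_\psi$) would imply that $x$ lies in one fixed $E_\psi$-class, which together with $E_\psi$ defining an equivalence relation (Lemma~\ref{L.er-h}, valid since $T$ is stable — though here we only need the simple cover property to get $E_\psi$ transitive, not the full equivalence) contradicts the fact that $\lnot\psi$ is a negated $h$-formula and hence, by Theorem~\ref{T.Palyutin} applied in a reduced power, its realisation set meets $\cI^+$-many fibers and can be spread across two distinct classes. I would write the argument in the contrapositive form just sketched, as it localises all the work to a single application of Theorem~\ref{T.Palyutin} together with Lemma~\ref{L.er-h}.
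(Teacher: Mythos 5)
There is a genuine gap, and it sits exactly where you flag ``the step that needs care.'' You set out to prove that the \emph{given} $h$-axiomatization $\bt_A\sqcup\bt_B$ already makes $\bt[\{\psi\},2]$ consistent for every $\psi\in B_\bt$. That statement is false, and the lemma does not assert it: 2-$h$-maximality only requires that \emph{some} $h$-axiomatization of $\bt$ has this property. The paper's own example from the proof of Lemma~\ref{L.type} defeats your version: let $\bt_A$ contain $x=a$ and $\bt_B$ contain $x\neq b$ with $a\neq b$, and take $\psi(x,y)$ to be $x=y$. Then $E_\psi(x_0,x_1)$ is equivalent to $x_0=x_1$, every realisation of $\bt_A$ is forced into the single $E_\psi$-class of $a$, and $\bt[\{\psi\},2]$ is inconsistent, although $\bt$ is a perfectly consistent $h$-axiomatized type. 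Your key heuristic --- ``since $\lnot\psi$ appears in $\bt$ there must be a realisation of the fragment outside any prescribed single $E_\psi$-class, otherwise $\bt$ would not be genuinely two-sided at $\psi$'' --- is precisely what fails: a negated $h$-formula in $\bt_B$ may simply be a logical consequence of $\bt_A$, in which case $\bt$ is not ``two-sided at $\psi$'' at all, yet nothing is inconsistent. For the same reason the reduced-power/Theorem~\ref{T.Palyutin} machinery cannot supply the two fiberwise witnesses required in your steps (i)--(iii): if all realisations of a finite $\bt_A$-fragment lie in one $E_\psi$-class, this is a first-order fact with parameters and persists in every model of the theory, so no splitting of $\bbN$ produces two inequivalent realisations. (You also read $\bt[B_0,n]$ as containing the $\bt_B$-conditions on the $x_i$; by definition it contains only $\bt_A(x_i)$ and the inequivalences $\lnot E_\psi(x_i,x_j)$.)

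The missing idea is that the axiomatization must be \emph{modified}: discard from $\bt_B$ every $\lnot\psi(x,\bar b)$ for which some finite fragment of $\bt_A$ pins all of its realisations lying in $D_\psi$ into a single $E_\psi$-class. Such formulas are redundant: since $\bt$ is consistent, a realisation of $\bt$ satisfies them, and every realisation of $\bt_A$ is either outside $D_\psi$ or $E_\psi$-equivalent to that realisation, hence also satisfies them. So the pruned pair still axiomatizes $\bt$, and by compactness it is 2-$h$-maximal by construction. Note that no reduced products, no simple cover property, and no stability enter this particular lemma; it is a purely syntactic pruning argument, with the model-theoretic content deferred to Lemma~\ref{L.partialtype1}.
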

\begin{proof} Consider an $h$-axiomatization $\bt_A\sqcup \bt_B$ of $\bt$ with 
\[
\bt_A=\{\varphi(x,\bar a_\varphi)\mid \varphi\in A_\bt\}\text{ and }
\bt_B=\{\lnot \psi(x,\bar b)\mid\psi\in B_\bt, \bar b\in \mathcal X_\psi\}.
\]
Let $C$ be the set of all formulas $\neg\psi(x,\bar y) \in B_\bt$ for which there exist $A_0\Subset A_\bt$ and $\bar b_\psi$ in $M$ of the appropriate sort such that
\begin{equation} \label{eq:n2}
M \vDash (\forall x) \left((\exists \bar{y}) \psi(x,\bar y) \wedge \left(\bigwedge_{\varphi\in A_0}\varphi(x,\bar a_\varphi)\right)\right)\Rightarrow\psi(x,\bar b_\psi).
\end{equation} 
Consider the $h$-axiomatization 
\[
\bt'=\bt_A\sqcup (\bt_B\setminus \{\neg \psi(x,\bar b)\mid \neg\psi(x,\bar y)\in C, \bar b\in\mathcal X_\psi\}).
\] 
It is 2-$h$-maximal: if $\bt'[\{\psi\},2]$ would fail to be consistent for certain $\psi \in B_{\bt'}$, this would imply $\psi \in C$, so $\psi$ couldn't have been in~$B_{\bt'}$. 
	%If $x\in M$ is a realisation of $\bt'$, then $x$ also realises 
	%realisations of $\bt'$ also realise $\bt$, because for every $\psi \in C$ and $\bar b\in\mathcal X_\psi$, there is some 
	%If $z\in M$ is a realisation of $\bt'$, then $z$ also realises $\bt$. 
Moreover,~$\bt'$ axiomatises $\bt$ because it is included in $\bt$ and every formula removed from~$\bt$ belongs to $\bt_B$ and is by \eqref{eq:n2} implied by $\bt_A$. 
	%fix $c$ realising $\bt'$. Since $\bt'\subseteq\bt$ and the formulas in $B_{\bt'}\setminus B_{\bt}$ are exactly those in $C$, we are left to show that for every $\psi\in C$ and $\bar b\in\mathcal X_\psi$ we have that $c\notin \psi(M,\bar b)$.
	%
	%
	%
	%
	%Fix $A_0$ and $\bar b_\psi$ in $M$ such that (\ref{eq:n2}) holds. Let $\psi\in C$ and $\bar b\in\mathcal X_\psi$. By consistency of $\bt$, there is $d\in M$ such that $M\models \bigwedge_{\varphi\in A_0}\varphi(d,\bar a_\varphi)\wedge\neg\psi(d,\bar b)$. By \eqref{eq:n2}, $M\models \psi(d,\bar b_{\psi})$, hence $M\models \psi(d,\bar b_\psi)\wedge\neg\psi(d,\bar b)$. Since $\psi$ is an $h$-formula and the theory of $M$ is stable, $h$-formulas define equivalence classes, hence $\psi(M,\bar b)\cap \psi(M,\bar b_\psi) = \emptyset$. Because $c \in \psi(M,\bar b_\psi)$ (by \eqref{eq:n2} and $c$ realising $\bt'$), it follows that $c \notin \psi(M,\bar b)$. As $\psi\in C$ and $\bar b\in\mathcal X_\psi$ were arbitrary, this concludes the proof.
\end{proof}

\begin{lemma}\label{L.partialtype1}
Suppose that a structure $M$ has the simple cover property. Then every $h$-axiomatizable type $\bt$ over $M$ is $h$-maximal. 
\end{lemma}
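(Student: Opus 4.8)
The statement is that if $M$ has the simple cover property, then every $h$-axiomatizable type $\bt$ over $M$ is $h$-maximal, i.e., we can find an $h$-axiomatization $\bt_A\sqcup\bt_B$ such that $\bt[B_0,n]$ is consistent for every $B_0\Subset B_\bt$ and every $n\geq 2$. The idea is to upgrade the argument of Lemma~\ref{L.maxtype1}, which handled the case $|B_0|=1$, $n=2$, to arbitrary finite $B_0$ and arbitrary $n$. Starting from an $h$-axiomatization $\bt_A\sqcup\bt_B$ of $\bt$, I would apply Lemma~\ref{L.maxtype1} first to pass to a 2-$h$-maximal axiomatization (still axiomatizing $\bt$), and then run a ``closure'' procedure that throws out of $\bt_B$ every formula $\psi$ which is ``redundant'' in a stronger sense — namely, such that for some $A_0\Subset A_\bt$, the set $\bigcap_{\varphi\in A_0}\varphi(M,\bar a_\varphi)\cap D_\psi^M$ meets only finitely many $E_\psi$-classes. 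The first task is to show that the resulting pair still axiomatizes $\bt$; the second is to show it is now $h$-maximal.

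\textbf{Key steps.} First I would make precise the set of ``bad'' formulas to remove. For $\psi\in B_\bt$, say $\psi$ is \emph{$n$-bounded} (relative to $A_\bt$) if there is $A_0\Subset A_\bt$ such that the definable set $P_{A_0}:=\bigcap_{\varphi\in A_0}\varphi(M,\bar a_\varphi)\cap D_\psi^M$ meets at most $n$ distinct $E_\psi$-equivalence classes; call $\psi$ \emph{bounded} if it is $n$-bounded for some $n$. The crucial observation, analogous to \eqref{eq:n2} in Lemma~\ref{L.maxtype1}, is that boundedness of $\psi$ is an $h$-expressible fact: ``$P_{A_0}$ meets at most $n$ $E_\psi$-classes'' is equivalent to $M\models(\forall x)(\chi(x)\to\bigvee_{j<n}E_\psi(x,\bar b_j))$ for suitable parameters $\bar b_j$ and $h$-formula $\chi$, and using the simple cover property this passes control from the displayed disjunction to a single disjunct in the relevant auxiliary structure (this is exactly the point where the hypothesis is used, just as in Lemma~\ref{L.maxtype1}). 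So if $\psi$ is bounded, then modulo $\Th(M)$ the conditions $\{\lnot\psi(x,\bar b):\bar b\in\mathcal X_\psi\}$, together with $\bt_A$, pin $x$ down to lie outside finitely many $E_\psi$-classes, and these finitely many ``forbidden $E_\psi$-classes'' are themselves $E_\varphi$-definable for $\varphi=\psi$; hence these conditions can be replaced by finitely many $h$-formulas (or their negations) absorbed into $\bt_A$ — more carefully, one argues that removing all bounded $\psi$ from $B_\bt$ and suitably enlarging $\bt_A$ does not change the type axiomatized, by the same implication-by-$\bt_A$ argument as in Lemma~\ref{L.maxtype1}. Second, after removing all bounded $\psi$, every remaining $\psi\in B_\bt$ has the property that $P_{A_0}$ meets infinitely many $E_\psi$-classes for every $A_0\Subset A_\bt$; I then need to check this yields consistency of $\bt[B_0,n]$ for every finite $B_0\subseteq B_\bt$ and every $n$. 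By compactness it suffices to realize, for each finite $A_0\Subset A_\bt$ and each finite $B_0\Subset B_\bt$ and each $n$, an $n$-tuple $x_0,\dots,x_{n-1}$ all satisfying $\bigwedge_{\varphi\in A_0}\varphi(x_i,\bar a_\varphi)$ and with $\lnot E_\psi(x_i,x_j)$ for $i<j<n$, $\psi\in B_0$; since each $P_{A_0}$ (for the fixed $A_0$) meets infinitely many $E_\psi$-classes for every single $\psi\in B_0$, and $|B_0|$ is finite, a routine pigeonhole/finite-intersection argument (or an induction on $|B_0|$, at each step using infinitude of classes to avoid the finitely many already-used ones) produces such a tuple inside $P_{A_0}$.

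\textbf{Main obstacle.} The delicate point is exactly the one flagged: transferring ``bounded'' from a statement about a disjunction over finitely many $E_\psi$-classes to something that can be cleanly absorbed into $\bt_A$ while provably not changing the axiomatized type, and doing this simultaneously for all bounded $\psi$ without circularity (removing some $\psi$'s could in principle change which other $\psi$'s are bounded, because $A_\bt$ grows). I would handle this by first defining boundedness relative to the \emph{original} finite subsets of $A_\bt$ only, performing one global removal, and then verifying post hoc — using the simple cover property and Theorem~\ref{thm:palyutinstab} (so that each $E_\psi$ is an equivalence relation, its classes being $h$-definable with parameters) — that no remaining $\psi$ has become bounded with respect to the enlarged $\bt_A$; the enlargement only adds $h$-formulas of the form ``$x$ is not $E_\psi$-equivalent to $\bar b$'' for finitely many $\bar b$ and the removed $\psi$, which cannot make an unbounded $\psi'\neq\psi$ bounded, and for $\psi'=\psi$ the formula was removed. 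A secondary subtlety is bookkeeping the simultaneous choice of the parameters $\bar b_j$ witnessing boundedness across all $A_0$; since $\Th(M)$ is complete one such choice works uniformly modulo the theory, exactly as the ``$\bar a_\varphi$ pins down all other instances'' step in Lemma~\ref{L.type} and Lemma~\ref{L.maxtype1}.
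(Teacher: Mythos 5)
Your proposal does not follow the paper's route, and both of its main steps have genuine gaps. The decisive one is the final ``routine pigeonhole/finite-intersection argument.'' Knowing that $P_{A_0}$ meets infinitely many $E_\psi$-classes for each $\psi\in B_0$ \emph{separately} does not let you pick even two elements of $P_{A_0}$ that are simultaneously $E_\psi$-inequivalent for all $\psi\in B_0$: take $P=(\{0\}\times\bbN)\cup(\bbN\times\{0\})$ inside $\bbN\times\bbN$, with $E_1$ (resp.\ $E_2$) identifying points with the same first (resp.\ second) coordinate. Then $P$ meets infinitely many classes of each relation, yet $P\subseteq [(0,0)]_{E_1}\cup[(0,0)]_{E_2}$, so no point of $P$ is inequivalent to $(0,0)$ under both relations at once. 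The step is therefore not combinatorial; it is exactly where the simple cover property must be invoked a second time. The paper's proof does precisely this: it inducts on $n$, observes that inconsistency of $\bt[B_0,n]$ means $\bigcap_{\varphi\in A_0}\varphi(M,\bar a_\varphi)$ is covered by the finitely many $E_\psi$-classes of the $n-1$ witnesses already found (ranging over all $\psi\in B_0$), and then uses the simple cover property to collapse that finite union to a \emph{single} class, contradicting $2$-$h$-maximality. In particular, no pruning of $\bt_B$ beyond Lemma~\ref{L.maxtype1} is needed; your class of ``bounded'' $\psi$ is in fact empty after Lemma~\ref{L.maxtype1}, but proving that emptiness is the substance of the lemma, not a harmless preprocessing step.

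The first half of your plan has a second problem. If $P_{A_0}\cap D_\psi^M$ met exactly $k\ge 2$ classes of which the type excludes some but not all, the residual information carried by the discarded conditions is ``$x$ lies in one of the $j$ surviving classes,'' i.e.\ a disjunction $\bigvee_{i<j}E_\psi(x,c_i)$ (possibly combined with $\lnot D_\psi(x)$). That is not an $h$-formula, so it cannot be absorbed into $\bt_A$ without destroying the $h$-axiomatization that the definition of $h$-maximality requires and that the proof of Theorem~\ref{T.cs} relies on, where $\bt_A$ and $\bt_B$ are handled by the two opposite clauses of Theorem~\ref{T.Palyutin}. So even granting the removal, you would not have produced an $h$-axiomatization of $\bt$, and the verification that the axiomatized type is unchanged is left unestablished.
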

\begin{proof}
By Lemma~\ref{L.maxtype1} we can fix a 2-$h$-maximal axiomatization $\bt_A\sqcup \bt_B$ of $\bt$. Fix $B_0\Subset B_\bt$. We first prove that $\bt[B_0,2]$ is consistent. Let $F\Subset B_0$. By enlarging $F$ we can assume that there is $A_0\Subset A_\bt$ such that
\[
F=\{\varphi(x_i,\bar a_\varphi)\mid \varphi\in A_0, i<2\}\cup \{\neg E_{\psi}(x_0,x_1) \mid \psi\in B_0\}.
\]
Since each $\bt[\{\psi\},2]$ is consistent for $\psi\in B_0$, we can fix elements $c_{j,\psi}$ in $M$, for $\psi\in B_0$ and $j\in\{0,1\}$, such that 
\[
M\models \bigwedge_{j=0,1}\bigwedge_{\psi\in B_0}\bigwedge_{\varphi\in A_0}\varphi(c_{j,\psi},\bar a_\varphi)\wedge \bigwedge_{\psi\in B_0} \neg E_{\psi}(c_{0,\psi},c_{1,\psi}).
\]
If $\bt[B_0,2]$ is not consistent, then we have 
\[
M\models(\forall x) (\bigwedge_{\varphi\in A_0}\varphi(x,\bar a_\varphi)\Rightarrow (\bigvee_{j=0,1}\bigvee_{\psi\in B_0} E_\psi(x,c_{j,\psi}))).
\]
Since $\psi$ is an $h$-formula and $M$ has the simple cover property, we can find $j\in\{0,1\}$ and $\psi\in B_0$ such that 
\[
M\models(\forall x)(\bigwedge_{\varphi\in A_0}\varphi(x,\bar a_\varphi)\Rightarrow E_\psi(x,c_{j,\psi})).
\]
This contradicts the assumed consistency of $\bt[\{\psi\},2]$.

We now proceed by induction. Fix $n\geq 3$, and $A_0\Subset A$. Since $\bt[B_0,n-1]$ is consistent, we can find $c_0,\ldots,c_{n-2}$ in $\varphi(M,\bar a_{\varphi})$ such that $M\models\neg E_\psi(c_i,c_j)$ for all $\psi\in B_0$ and $i\neq j$. If $\bt[B_0,n]$ is not consistent, as above
\[
M\models(\forall x) (\bigwedge_{\varphi\in A_0}\varphi(x,\bar a_\varphi)\Rightarrow (\bigvee_{j<n-1}\bigvee_{\psi\in B_0} E_\psi(x,c_{j}))).
\]
As above, the simple cover property implies that there is $j<n-1$ and $\psi\in B_0$ such that 
\[
M\models(\forall x) (\bigwedge_{\varphi\in A_0}\varphi(x,\bar a_\varphi)\Rightarrow E_\psi(x,c_{j})), 
\]
a contradiction.
\end{proof}

\section{Proofs of saturation}\label{S.ProofsOfSaturation}
As a warm up for the proof of Theorem~\ref{T.cs} we provide a considerably simpler proof of \cite[Theorem 2.7]{FaSh:Rigidity}, based on the proof of \cite[Theorem~4]{palmgren}, where the analogous statement has been shown for countably generated ideals. For layered ideals see Definition~\ref{D.layered}. Also remember that `type’ means `a consistent set of formulas in $x$’ and that `type over $M$' means `a consistent set of formulas in language expanded by adding constants for elements of $M$' (see \S\ref{S.Types}).

\begin{theorem}\label{T.layeredcntblysat}
Let $\cI$ be a layered ideal on a set $I$. Suppose that $M_i$, for $i\in I$, are structures in the same countable language. Then $\prod_iM_i/\cI$ is $\aleph_1$-saturated.
\end{theorem}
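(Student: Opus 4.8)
\textbf{Proof plan for Theorem~\ref{T.layeredcntblysat}.}

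The plan is to show that every countable consistent type over $M=\prod_i M_i/\cI$ is realised, using the structural analysis of types developed in \S\ref{S.ReducedStable} together with the partition property of layered ideals from Proposition~\ref{P.L.1}. By Lemma~\ref{L.scp} and Lemma~\ref{L.atomless}, $M$ has the simple cover property, so by Lemma~\ref{lemma:booleanh} every formula over $M$ is (modulo $\Th(M)$) a Boolean combination of $h$-formulas; hence it suffices to realise types axiomatised by $h$-formulas and negations of $h$-formulas. (Note that, unlike in Theorem~\ref{T.cs}, here we do \emph{not} have stability, so we cannot invoke Lemma~\ref{L.type} verbatim; instead one argues directly that a countable type is equivalent to one of the form $\bt_A \sqcup \bt_B$, absorbing the finite-disjunction clauses into the enumeration since a countable union of finite disjunctions of $h$- and $\neg h$-formulas can be split conjunct-by-conjunct and, after passing to a consistent complete-enough refinement, reorganised — this is exactly the manoeuvre in the cited proof of \cite[Theorem~4]{palmgren}.) Fix such an $h$-axiomatisation $\bt=\bt_A\sqcup\bt_B$ of a countable type, and by Lemma~\ref{L.partialtype1} we may assume it is $h$-maximal, so $\bt[B_0,n]$ is consistent for every finite $B_0\Subset B_\bt$ and every $n$.

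Next I would set up the approximating data. Enumerate the conditions of $\bt$ so that $\bt_A=\{\varphi_k(x,\bar a_{\varphi_k}) : k\in\bbN\}$ and $\bt_B=\{\neg\psi_k(x,\bar b) : k\in\bbN,\ \bar b\in\cX_{\psi_k}\}$ in a single countable list, and for each $n$ let $A_n\Subset A_\bt$, $B_n\Subset B_\bt$ be the finite sets of $h$-formulas appearing among the first $n$ conditions, with the relevant parameters. For each $i\in I$ and each $n$, the finite $n$-type $\bt[B_n,n']$ (for an appropriate $n'$) is consistent in $M$ by $h$-maximality, so by Theorem~\ref{T.Palyutin} the set of coordinates $i$ where the corresponding finite first-order sentence fails in $M_i$ lies in $\cI$. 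The key point, using that $\neg E_{\psi}$ is (the negation of) an $h$-formula and $E_\psi$ defines a transitive relation, is that for each fixed $n$ we get a family $\cF_n\Subset\cP(I)$ — namely the sets of coordinates on which the various finitely-many local conditions are satisfied together with enough $E_\psi$-inequivalent witnesses — whose complements lie in $\cI$, and on the $\cI^+$-many coordinates in $\cF_n$ we can locally pick an element $c^{(n)}_i\in M_i$ realising $\bt_A$'s first $n$ conditions and lying outside the finitely many bad $E_\psi$-classes determined by the earlier-chosen witnesses.

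Finally, apply Proposition~\ref{P.L.1} to the sequence $(\cF_n)$ to obtain a partition $I=\bigsqcup_n I_n$ with the stated $\cI^+$-transfer property, and define $c\in\prod_i M_i$ by $c_i=c^{(n)}_i$ for $i\in I_n$. The verification splits into two parts. For the positive conditions $\varphi_k(x,\bar a_{\varphi_k})\in\bt_A$: for all $n\geq k$, $c_i$ satisfies $\varphi_k$ on all of $I_n\cap(\text{good set})$, so the bad set for $\varphi_k$ meets only finitely many pieces $I_n$ up to an $\cI$-set and hence lies in $\cI$; since $\varphi_k$ is an $h$-formula, Theorem~\ref{T.Palyutin} gives $M\models\varphi_k(\pi(c),\bar a_{\varphi_k})$. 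For the negative conditions $\neg\psi(x,\bar b)$: here one uses that $c_i$ was chosen in $E_\psi$-classes distinct from those of the previously fixed witnesses, and that by Proposition~\ref{P.L.1} the coordinate set witnessing $E_\psi(\pi(c),\bar b)$, if it were $\cI$-positive, would be $\cI$-positive already inside infinitely many $I_n$ — contradicting the choice of $c^{(n)}_i$ outside the relevant class for large $n$; here again $E_\psi$ being an $h$-formula lets Theorem~\ref{T.Palyutin} convert the coordinate-wise statement into a statement in $M$. I expect the main obstacle to be exactly this bookkeeping: arranging the enumeration and the finite sets $\cF_n$ so that a \emph{single} partition from Proposition~\ref{P.L.1} simultaneously handles \emph{all} positive conditions (whose bad sets must be pushed into $\cI$) and \emph{all} negative conditions (whose would-be-bad sets must be kept $\cI$-positively spread out, then killed by the local choices) — i.e.\ getting the quantifier order `partition first, then the diagonal element works for every condition' right, which is the crux of the Palmgren-style argument adapted to layered ideals.
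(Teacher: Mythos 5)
Your overall skeleton --- finite partial realizations of the type, a single application of Proposition~\ref{P.L.1} to produce one partition $I=\bigsqcup_n I_n$ that works for all conditions simultaneously, and a diagonal element --- is exactly the paper's strategy, and your treatment of the positive conditions is correct. The gap is in the negative conditions. You import the $E_\psi$-machinery of \S\ref{S.ReducedStable} ($h$-maximality, Lemma~\ref{L.partialtype1}, choosing witnesses in distinct $E_\psi$-classes), but that machinery is only usable under stability: the step in the proof of Theorem~\ref{T.cs} asserting that at most one candidate $c_f$ satisfies $\psi(x,\bar b)$ rests on the fact that for a stable $h$-formula any two instances $\varphi(M,\bar a)$, $\varphi(M,\bar b)$ are disjoint or equal (the negation of Theorem~\ref{thm:palyutinstab}\eqref{itm:3}), which is what makes $E_\psi$ an equivalence relation whose classes control membership in $\psi(M,\bar b)$. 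Theorem~\ref{T.layeredcntblysat} has no stability hypothesis, so ``lying outside the $E_\psi$-class of an earlier witness'' does not imply ``not in $\psi(M,\bar b)$'', and the whole device buys you nothing. (Two further symptoms of the confusion: $E_\psi(\pi(c),\bar b)$ does not typecheck, since $E_\psi$ relates two potential realizations rather than a realization and a parameter tuple; and your final verification aims to show that the set of coordinates where $\psi$ \emph{holds} is in $\cI$, which is both stronger than needed and not what the construction delivers.)

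The repair is that for a countable type no such machinery is needed, because Theorem~\ref{T.Palyutin} makes negations of $h$-formulas cheap: $M\models\neg\psi(\pi(c),\bar b)$ if and only if the coordinatewise failure set $\{i\mid M_i\models\neg\psi(c_i,\tilde b_i)\}$ is merely $\cI$-positive. The paper's proof thus runs: pass to an $N$-complete type over a countable $N$, use Lemma~\ref{lemma:booleanh} together with completeness to extract an axiomatization $\{\varphi_n(x,\bar a_n),\neg\psi_n(x,\bar b_n)\}_{n\in\bbN}$ by $h$-formulas and negated $h$-formulas (no appeal to Lemma~\ref{L.type}); by finite satisfiability choose a single global witness $c_n\in M$ for the first $n$ conditions and lift it; record, for $m\leq n$, the sets $A^m_n\in\cI$ (where $\varphi_m$ fails) and $B^m_n\in\cI^+$ (where $\psi_m$ fails), both computed via Theorem~\ref{T.Palyutin}. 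Applying Proposition~\ref{P.L.1} to $\cF_n=\{A^m_n,B^m_n\mid m\leq n\}$ yields a partition for which the diagonal element satisfies $\bigcup_n(A^m_n\cap I_n)\in\cI$ and $\bigcup_n(B^m_n\cap I_n)\in\cI^+$ for every $m$; the latter is precisely the $\cI$-positivity that Theorem~\ref{T.Palyutin} requires to conclude each $\neg\psi_m(\pi(\tilde c),\bar b_m)$. So your bookkeeping worry is resolved by the quantifier structure already built into Proposition~\ref{P.L.1}, but your mechanism for the negative conditions must be replaced by this direct $\cI$-positivity argument.
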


\begin{proof}

If the index set $I$ is countable then by \cite{Fa:CH}, $\cP(\bbN)/\cI$ is $\aleph_1$-saturated and by \cite{pacholski1970countably} so is $\prod_i M_i/\cI$ (the authors of this paper refer to countable compactness; see \S\ref{S.Types} for clarification of terminology). By the latter result, it would suffice to prove that $\cP(I)/\cI$ is $\aleph_1$-saturated, but we include a proof of the full result. 

Let $\bt(x)$ be an $N$-complete type over some countable subset $N$ of $M$.
Since $\cP(\bbN)/\cI$ is atomless, by Lemma~\ref{lemma:booleanh}, every formula in $\bt$ is equivalent to a Boolean combination of $h$-formulas. Therefore $\bt$ has an axiomatization of the form $\{\varphi_n(x,\bar a_n), \neg\psi_n(x,\bar b_n)\}_{ n\in\mathbb{N}}$ where each $\varphi_n$ and each $\psi_n$ is an $h$-formula, and $\bar a_n$ and $\bar b_n$ are parameters in $N$ of the appropriate sort. 
	%The structure of $h$-axiomatizations dictates that we allow $\psi_m=\psi_n$ but not $\varphi_m=\varphi_n$ for different $m$ and $n$. 
(Note that we are not using Lemma~\ref{L.type} and both $\psi_m=\psi_n$ and $\varphi_m=\varphi_n$ are allowed for different $m$ and $n$.) 
Lift each $\bar a_n$ and $\bar b_n$ to $\tilde a_n$ and $\tilde b_n$ in $\prod_iM_i$. Since $\bt$ is consistent, we can find $c_n\in M$  %\marginpar{IF: $c_n$ should be $c(n)$ for consistency of notation, here and elsewhere.}
which realises the type $\{\varphi_m(x,\bar a_m),\neg\psi_m(x,\bar b_m)\}_{m\leq n}$. 
Lift $c_n$ to $\tilde c_n\in\prod_i M_i$, and let
\[
A^m_n=\{i\in I\mid M_i\models\neg\varphi_m((\tilde c_n)_i,(\tilde a_m)_i)\},
\]
and 
\[
B^m_n=\{i\in I \mid M_i\models \neg\psi_m((\tilde c_n)_i,(\tilde b_m)_i)\}.
\]
Since all $\varphi_m$ and all $\psi_m$ are $h$-formulas, by Theorem~\ref{T.Palyutin} we have $A^m_n\in \cI$ and $B^m_n\in \cI^+$ for all $m\leq n$.

By Proposition~\ref{P.L.1} applied to $\cF_n=\{A^m_n, B^m_n\mid m\leq n\}$ there is a partition $I=\bigsqcup_n I_n$ such that $\bigcup_n (A^m_n\cap I_n)\in \cI$ and $\bigcup_n (B^m_n\cap I_n)\in \cI^+$ for all $m$. Define $\tilde c\in \prod_i M_i$ by 
\[
\tilde c_i=(\tilde c_n)_i\text{ if } i\in I_n. 
\]
Then for every $m$ the set $\{i\mid M_i\models \varphi_m((\tilde c)_i,(\tilde a_m)_i)\}=\bigcup_n (A^m_n\cap I_n)$ is in $\cI$ while $\{i \mid M_i\models \neg\psi_m((\tilde c)_i,(\tilde b_m)_i)\}=\bigcup_n (B^m_n\cap I_n)$ is in $\cI^+$. Thus Theorem~\ref{T.Palyutin} implies that $c=\pi(\tilde c)$ realizes $\bt$, where $\pi\colon\prod_iM_i\to M$ is the quotient map. 
\end{proof}

\begin{proof}[Proof of Theorem~\ref{T.cs}] 
Suppose that $M_i$, for $i\in I$, are structures in the same countable language, $\cI$~is a layered ideal on $I$, and the theory of $M=\prod_i M_i/\cI$ is stable. We need to prove that $M$ is $\fc$-saturated. 
 
Fix a complete type $\bt$ over a subset of $M$ of cardinality $<\mathfrak c$. By Lemmas~\ref{L.type} and ~\ref{L.maxtype1}, and \ref{L.partialtype1}, $\bt$ has a maximal $h$-axiomatization of the form 
\[
\{\varphi_n(x,\bar a_n)\mid n\in \bbN\}\cup \{\neg\psi_n(x,\bar b)\mid n\in \bbN, \bar b \in \mathcal X_n\},
\]
where $\{\varphi_n\}$ and $\{\psi_n\}$ are disjoint sets of $h$-formulas in the free variable $x$. %(The fact that $\bt$ can be chosen to be $h$-maximal follows from that, being $\cP(I)/\cI$ atomless, $M$ has the simple cover property.)
By $h$-maximality of $\bt$ and Lemma~\ref{L.partialtype1}, for every $B_0\Subset B_\bt$ and a positive $n\geq 2$, $\mathbf t[B_0,n]$ is consistent. % and therefore, by Lemma~\ref{L.h-saturation}\eqref{2.h-saturation}, is realised. 

% We will now perform the same diagonalisation as in Lemma~\ref{L.h-saturation}, only this time we diagonalise also over formulas in $B_\bt$, and produce continuum many witnesses.

As before, we lift $\bar a_n$ to $\tilde a_n\in\prod_iM_i$. We also let $Y_n$ be sets in $\cI$ such that $\bigcup Y_n=I$. %Without loss of generality we assume $Y_n\subseteq Y_{n+1}$ for all $n$.
Since by $h$-maximality for each $B_0\Subset B_\bt$ the type $\bt[B_0,n]$ is consistent, we can find $d_{n,u}\in M$, for $u\in \{0,1\}^n$, such that 
\[
M\models \bigwedge_{j\leq n}\bigwedge_{u\in \{0,1\}^n} \varphi_j(d_{n,u},\bar a_{j}) \wedge \bigwedge_{j\leq n}\bigwedge_{u\neq v}\neg E_{\psi_j}(d_{n,u},d_{n,v}).
\]
Lift $d_{n,u}$ to $\tilde d_{n,u}\in\prod_iM_i$. Then for $m\leq n$ and $u\neq v$ in $\twon$, the set 
\[ 
A_{m,n,u,v}:=\{i\mid M_i\models \neg E_{\psi_m}((\tilde d_{n,u})_i,(\tilde d_{n,v})_i)\}
\]
is $\cI$-positive by Theorem~\ref{T.Palyutin}. (This theorem will be used quite a few times in the following lines.) Set $A_{m,n,u,u}=\emptyset$ for $u\in \{0,1\}^n$. Since each~$E_{\psi_n}$ is an equivalence relation, we have $A_{m,n,u,v} = A_{m,n,v,u} $ for all $m\leq n$ and $u,v$ in $\{0,1\}^n$. For $m\leq n$ and $u\in \{0,1\}^n$ the set 
\[
B_{m,n,u} =\{i \mid M_i\models \lnot \varphi_m(d_{n,u})\}
\]
belongs to $\cI$. Let 
\[
\cF_n=\{A_{m,n,u,v}, B_{m,n,u}\mid m\leq n, u\neq v, u,v\in \{0,1\}^n\}.
\] 
Proposition~\ref{P.L.1} implies that there is a partition $I=\bigsqcup_n I_n$ such that $\bigcup_n (B_{m,n,f\rs n}\cap I_n)\in \cI$ and $\bigcup_n (A_{m,n,f\rs n,g\rs n}\cap I_n)\in \cI^+$ for all $m$ and $f\neq g$ in $\{0,1\}^{\bbN}$. For $f\in \{0,1\}^{\bbN}$, define $\tilde c_f\in\prod_iM_i$ by 
\[
(\tilde c_f)_i=(\tilde d_{n,f\rs n})_i\text{ if } i\in I_n.
\]
Set $c_f=\pi(\tilde c_f)$. For every $f\in \{0,1\}^\bbN$ and $m\in \bbN$ we have that 
\[
\{i\mid M_i\models \lnot \varphi_m(c_{f})\}=\bigcup_n (B_{m,n,f\rs n}\cap I_n)
\] 
belongs to $\cI$, thus $M\models \varphi_m(c_f, a_m)$. For distinct $f$ and $g$ in $\twoN$, we have that 
\[
\{i\mid M_i\models \neg E_{\psi_m}(d_f,d_g)\}=\bigcup_n A_{m,n,f\rs n,g\rs n}, 
\]
is $\cI$-positive and Lemma~\ref{L.er-h} (together with Theorem~\ref{T.Palyutin}) implies $M\models \neg E_{\psi_m}(d_f,d_g)$. Thus for every $m$ and $\bar b\in \mathcal X_m$ there is at most one $f\in \twoN$ such that $M\models\psi_m(c_f,\bar b)$. Hence the set 
\[
Y_m=\{f\in \twoN\mid M\models \psi_m(c_f,\bar b)\text{ for some } \bar b \in \mathcal X_m\}
\] 
has cardinality at most $|\bt|$, and therefore there is $f\in \twoN\setminus\bigcup_{m} Y_m$, and $c_f$ satisfies $\bt$. This concludes the proof.
\end{proof}

\section{Concluding remarks}

\subsection{Existence of saturated models}\label{S.existence}

The foregoing results, which identify stability as the separating factor for (non-)saturation of reduced products, are but one instance of a general tight link between stability and existence of saturated models.
Indeed, it is even consistent (modulo large cardinals) that stable theories are the only theories with uncountable saturated models.
We collect a few well-known related results that apparently haven't been stated in a single convenient place yet.
The standard construction of saturated models requires assumptions on cardinal arithmetic, more precisely a regular cardinal~$\kappa$ such that $2^{<\kappa}=\kappa$ (see the proof of \cite[Proposition~5.1.5]{ChaKe}). However, if $T$ is complete and stable then it has a saturated model; more precisely, it has a saturated model of every cardinality in which it is stable (\cite{harnik1975existence}, also \cite[Theorem~VIII.4.7]{shelah1990classification}). More specifically, if the language of~$T$ is countable and~$A$ is a countable model of $T$, then the ultrapower of~$A$ associated with a nonprincipal ultrafilter on $\bbN$ is saturated by \cite[Theorem~5.6]{FaHaSh:Model2}. 

The question of the existence of saturated models of unstable theories is more interesting. 	 The assertion `some first-order theory in a countable language has no uncountable saturated models’  is, by \cite[Theorem~VIII.4.7]{shelah1990classification},  equivalent to (see the   last paragraph of \cite{halevi2023saturated} for  details)
\begin{enumerate}
	\item [$(\dagger)$] No cardinal is inaccessible and $\mathsf{GCH}$ fails at every cardinal. 
\end{enumerate}   
The large cardinal strength  of this statement lies between the existence of a measurable cardinal $\kappa$ of Mitchell order\footnote{For general background on large cardinals see \cite{Kana:Book} and for the Mitchell ordering  see \cite{mitchell2009beginning}.} $\kappa^{++}$ and one of a Mitchell order $\kappa^{+++}$.  The latter assumption  suffices to obtain ($\dagger$) by \cite{merimovich2007power} (see also 
\cite{gitikmerimovich}). The converse follows from the consistency strength for failure of the Singular Cardinal Hypothesis (\cite{gitik1991strength}). 
  See \cite{schweber} and especially \cite{caicedo} for more details.

All that said, it is well-known that an absoluteness argument implies that for most purposes the assumption that a saturated model exists is innocuous (i.e., removable by standard metamathematical means); see~\cite{halevi2023saturated}. 

\subsection{Saturation of reduced products with stable theory}

By the Feferman--Vaught theorem, the theory of $M=\prod_i M_i/\cI$ can be computed from the relation between the theories of $M_i$ and the structure of the Boolean algebra $\cP(I)/\cI$. In particular, the question whether the theory of $M$ is stable depends only on the theories of the~$M_i$’s. It is however not clear how exactly stability of the theory of $M$ depends on the stability properties of the theories of the factors $M_i$, even when $I=\mathbb N$ and $\cI=\Fin$. For example, we do not know of a convenient characterisation, in terms of $\Th(M_0)$, of those first order structures $M_0$ such that $\Th(M_0^\bbN/\Fin)$ is stable.
The following examples show that, when $\cI$ is any ideal on a set $I$ such that $\cP(I)/\cI$ is infinite, (un-)stability of one coordinate of a pair $(M_0,M_0^I/\cI)$ on itself doesn't give any information about (un-)stability of the other coordinate, since all four possible combinations can occur:

\begin{examples}\label{examples}
\begin{enumerate}[label=(\arabic*), wide, labelindent=0pt]
\item \label{example.1} If some $h$-formula $\varphi$ has the order property in $\Th(M_i)$ for all $i \in S$ with $S \in \cI^+$,  
then $\varphi$ has the order property in $\Th(\prod_i M_i/\cI)$ and the latter is unstable.\footnote{A very special case of this is the robust order property of \cite{farah2022betweenII}.}
\item \label{example.2}  Suppose $T$ is a theory all of whose models have stable theory, which is moreover axiomatized by Horn sentences (such as e.g.\ the theory of abelian groups).  If $M_i$ are all models of $T$, then $M=\prod_i M_i/\cI$ is stable.
\item \label{example.3} There are a language $\calL$ and an $\calL$-structure $M_0$ such that $\Th(M_0)$ is stable but $\Th(M_0^I/\cI)$ is unstable. As in the proof of Theorem~\ref{thm:palyutinstab}, take $M_0$ to be a two-element linear order. Then, if $\cP(I)/\cI$ is infinite, $M_0^I/\cI$ is a partial ordering with infinite chains.\footnote{Alternatively, take $M_0$ to be the two-element Boolean algebra, in which case one obtains $\cP(I)/\cI$.}
\item \label{example.4} Finally, reduced products of unstable structures can become stable. In fact this is even the case for reduced powers: \cite[Example~1.5]{wierzejewski} gives an example of a structure $M_0$ with unstable theory such that $M_0\times M_0$ has stable theory and additionally, for every ideal $\cI$ that is not maximal, the reduced power $M_0^I/\cI$ is elementarily equivalent to $M_0\times M_0$. 
\end{enumerate}
\end{examples} 

Notably, for every infinite cardinal $\kappa$ Shelah characterized ideals $\cI$ such that \emph{every} reduced product associated with $\cI$ is $\kappa$-saturated (\cite{shelah1972filters}, see also \cite{pacholski1970countably} and \cite{shelah2021atomic}).\footnote{Shelah's proof uses the Feferman--Vaught theorem, and it can probably be simplified by using Palmgren's trick as in the proof of Theorem~\ref{T.layeredcntblysat}.} Our Corollary~\ref{T.non-saturated+} (or, in fact, the presence of Hausdorff gaps in analytic quotients, see \cite{To:Gaps}) shows that no analytic ideal on $\bbN$ satisfies these assumptions for $\kappa\geq \aleph_2$, because the reduced products whose theory is unstable cannot be $\aleph_2$-saturated. 

We thus restrict to reduced products whose theory is stable. In Theorem~\ref{T.cs} we proved that the reduced product associated with a layered ideal is $\fc$-saturated if its theory is stable. We conjecture that the assumptions on the ideal $\cI$ in this theorem are far from optimal. In \cite[Theorem~5.6]{FaHaSh:Model2} it has been proven that the ultraproduct of countable structures associated with a nonprincipal ultrafilter on $\bbN$ is saturated if its theory is stable. What about reduced products? 

\begin{question} \label{C.atomless}
Is it true that if $\cI$ is any ideal on $\bbN$ such that $\cI\supseteq \Fin$ and the theory of $M=\prod_i M_i/\cI$ is stable, then $M$ is $\fc$-saturated?
\end{question}

A prominent class of ideals for which we don’t know the answer is the class of density ideals. These are $F_{\sigma\delta}$ ideals constructed from lower semicontinuous submeasures on $\bbN$ (see \cite[\S1.13]{Fa:AQ}), extending ideals known as Erd\"os--Ulam ideals (see for example \cite{JustKr}, \cite{Just:Repercussions}). This class includes the ideal $\cZ_0=\{A\subseteq\bbN\mid \lim\sup_n\frac{|A\cap [0,n]|}{n}=0\}$ of asymptotic density zero subsets of $\bbN$. By Corollary~\ref{T.non-saturated+}, no reduced product over $\cZ_0$ whose theory is unstable is $\aleph_1$-saturated. We do not know whether the conclusion of Theorem~\ref{T.cs} can hold for an ideal $\cI$ on $\bbN$ such that $\cP(\bbN)/\cI$ is not $\aleph_1$-saturated. This may depend on the first-order theories of the structures involved. If the theory of $M$ is $\aleph_1$-categorical then the conclusion of Theorem~\ref{T.cs} holds trivially\footnote{`Trivially’ modulo the landmark Morley’s theorem, which implies that all models of $\Th(M)$ of the same uncountable cardinality are isomorphic (see e.g., \cite[\S 7.1]{ChaKe}).}. This leaves open the possibility that the dividing line for full saturation of reduced products associated with (for example) $\cZ_0$ is provided by a property strictly stronger than stability. 

Here is a less ambitious variant of Question~\ref{C.atomless}. 
\begin{question}\label{Q.2} 
Suppose $\cI$ is an ideal on $\bbN$, and $M_n$ are structures such that $\prod_nM_n/\cI$ has stable theory and is $\aleph_1$-saturated. Is $\prod_nM_n/\cI$ $\fc$-saturated?
\end{question}

The proof of Theorem~\ref{T.cs} can be modified to give a positive answer to Question~\ref{Q.2} in case when $\cI$ has the disjoint refinement property (even if it is not layered). An ideal has the \emph{disjoint refinement property} if every sequence $Z_n$, for $n\in \bbN$, of $\cI$-positive subsets of $I$ has a disjoint refinement: $\cI$-positive and disjoint sets $Y_n\subseteq Z_n$, for $n\in \bbN$. It is not difficult to see that Proposition~\ref{P.L.1} implies that every layered ideal has the disjoint refinement property. As Michael Hru\v s\' ak pointed out, the disjoint refinement property fails for some ideals $\cI$. This is because for every poset $\bbP$ of cardinality not greater than $\fc$ the regular open algebra of $\bbP$ is isomorphic to $\cP(\bbN)/\cI$ for some ideal~$\cI$ (e.g., \cite{MoSo}). If~$\bbP$ is chosen to be a countable atomless poset, then the corresponding quotient $\cP(\bbN)/\cI$ is atomless yet it has a countable, dense (in the forcing sense) subset and it therefore fails the disjoint refinement property.

In \cite{Keisler.Ultraproducts} Keisler defined an order on first-order theories in terms of saturation of ultrapowers. After a long hiatus, in the recent years several spectacular results have been proven about it (\cite{malliaris2013general}, \cite{malliaris2016cofinality}, \cite{malliaris2018keisler}). While an analog of the Keisler order associated with reduced powers with respect to the ideals $\cI$ such that $\cP(I)/\cI$ is atomless can be naturally defined on theories with the simple cover property (i.e., on theories preserved under reduced products,  Corollary~\ref{C.Preservation}), we are not aware of any research in this direction.

We conjecture that results analogous to our main theorems hold in continuous model theory (\cite{BYBHU}; the result of \cite[Theorem~5.6]{FaHaSh:Model2} includes this case). Even proving $\aleph_1$-saturation of reduced products associated with $\Fin$ is considerably more technical in continuous model theory than in the case of discrete logic (see \cite[Theorem~1.5]{FaSh:Rigidity} and \cite[Theorem~16.5.1]{Fa:STCstar} for a simpler proof). These proofs use the Feferman--Vaught theorem for continuous logic (\cite{ghasemi2014reduced}, see also \cite[\S 16.2]{Fa:STCstar}). It is likely that a simpler proof, using continuous versions of the results in \S\ref{S.ReducedStable}, exists. The analog of Palyutin's theory in continuous setting has been developed in~\cite{fronteau2023produits}.

\bibliographystyle{amsplain}
\bibliography{library} 
\end{document}